\documentclass[a4paper,12pt]{article}
\usepackage{amssymb, amsmath, amsthm}
\usepackage{a4wide}
\usepackage{array}
\usepackage{dsfont}             
\usepackage{mathrsfs}           
\usepackage{accents}            
\usepackage{pstricks}
\usepackage{pst-plot}
\usepackage{pst-node}
\usepackage{stmaryrd}           
\usepackage[all]{xy}            
\usepackage{ulem}               
\usepackage[lowtilde]{url}               

\newtheorem{thm}{Theorem}[section]
\newtheorem{cor}[thm]{Corollary}
\newtheorem{lem}[thm]{Lemma}

\theoremstyle{definition}
\newtheorem{ex}[thm]{Example}
\newtheorem{rem}[thm]{Remark}

\newtheorem{definition}[thm]{Definition}

\newtheorem{convention}[thm]{Convention}
\newtheorem{assumption}[thm]{Assumption}
\newtheorem{proposition}[thm]{Proposition}


\newcommand{\C}{\ensuremath{\mathds C}}                     

\newcommand{\R}{\ensuremath{\mathds R}}						

\newcommand{\N}{\ensuremath{\mathds N}}		            
\newcommand{\Nn}{\ensuremath{\mathds N_0}}				    
\newcommand{\wP}{\ensuremath{\mathds P}}				    
\newcommand{\E}{\ensuremath{\mathds E}}	                    
\newcommand{\one}{\ensuremath{\mathds 1}}                   

\newcommand{\cA}{\ensuremath{\mathcal A}}
\newcommand{\cB}{\ensuremath{\mathcal B}}

\newcommand{\cF}{\ensuremath{\mathcal F}}
\newcommand{\cG}{\ensuremath{\mathcal G}}
\newcommand{\cH}{\ensuremath{\mathcal H}}
		
\newcommand{\cL}{\ensuremath{\mathcal L}}
\newcommand{\cM}{\ensuremath{\mathcal M}}
\newcommand{\cP}{\ensuremath{\mathcal P}}
\newcommand{\cR}{\ensuremath{\mathcal R}}
\newcommand{\cS}{\ensuremath{\mathcal S}}





\newcommand{\nnrm}[2]{\ensuremath{\| #1 \|_{#2}}}

\newcommand{\gnnrm}[2]{\ensuremath{\big\| #1 \big\|_{#2}}}

\newcommand{\sgnnrm}[2]{\ensuremath{\Big\| #1 \Big\|_{#2}}}



\renewcommand{\H}{\ensuremath{\mathcal{H}}}

\renewcommand{\epsilon}{\ensuremath{\varepsilon}}

\renewcommand{\geq}{\ensuremath{\geqslant}}
\renewcommand{\leq}{\ensuremath{\leqslant}}

\newcommand{\dl}{\ensuremath{\mathrm{d}}}

\newcommand{\dom}{\ensuremath{\mathcal{O}}}

\newcommand{\supp}{\ensuremath{\mathop{\operatorname{supp}}}}


\newcommand{\bo}{\ensuremath{\mathscr L}}

\newcommand{\nuc}{\ensuremath{\mathscr L_1}}

\newcommand{\hs}{\ensuremath{\mathscr L_2}}

\newcommand{\tr}{\ensuremath{\mathop{\operatorname{Tr}}}}

\newcommand{\id}[1]{\ensuremath{\operatorname{id}}_{#1}}



\newcommand{\Lap}{\ensuremath{\Delta_{\dom}^\text{D}}}

\renewcommand{\Re}{\ensuremath{\operatorname{Re}}}










\definecolor{mylightGray}{gray}{0.95}

\allowdisplaybreaks
\swapnumbers
\numberwithin{equation}{section}

\title{
Singular Behavior of the Solution to the Stochastic Heat Equation on a Polygonal Domain
\thanks{May 5, 2013. This work has been supported by the Deutsche Forschungsgemeinschaft (DFG Priority Program 1324, grants SCHI 419/5-1, SCHI 419/5-2).}
}
\author{
Felix~Lindner
}

\date{}

\begin{document}
\maketitle

\begin{abstract}
We study the stochastic heat equation with trace class noise and zero Dirichlet boundary condition on a bounded polygonal domain $\dom\subset\R^2$. It is shown that the solution $u$ can be decomposed into a regular part $u_{\mathrm R}$ and a singular part $u_{\mathrm S}$ which incorporates the corner singularity functions for the Poisson problem. 
Due to the temporal irregularity of the noise, both $u_{\mathrm R}$ and $u_{\mathrm S}$ have negative $L_2$-Sobolev regularity of order $s<-1/2$ in time.
The regular part $u_{\mathrm R}$ admits spatial Sobolev regularity of order $r=2$, while the spatial Sobolev regularity of $u_{\mathrm S}$ is restricted by $r<1+\pi/\gamma$, where $\gamma$ is the largest interior angle at the boundary $\partial\dom$. We obtain estimates for the Sobolev norm of $u_{\mathrm R}$ and the Sobolev norms of the coefficients of the singularity functions. The proof is based on a Laplace transform argument w.r.t.\ the time variable.
The result is of interest in the context of numerical methods for stochastic PDEs.
\end{abstract}
\bigskip
\textbf{Keywords:} Corner singularity, Laplace transform, polygonal domain, Sobolev regularity, stochastic heat equation, stochastic partial differential equation 
\\[.5em]
\textbf{MSC 2010:} 60H15, 35B65; secondary: 35R60, 46E35

\section{Introduction}
\label{Introduction}
Let $\dom\subset\R^2$ be a (possibly non-convex) bounded polygonal domain, $T\in(0,\infty)$ and let $\Lap:D(\Lap)\subset L_2(\dom)\to L_2(\dom)$ be the Laplace operator on $\dom$ with zero Dirichlet boundary condition. In this paper, we analyse the regularity and the singular behavior of the $L_2(\dom;\R)$-valued mild solution $u=(u(t))_{t\in[0,T]}$ to the stochastic heat equation
\begin{equation}\label{SHE}
\begin{aligned}
\dl u(t)&=\big[\Lap u(t)+ F(u(t))\big]\dl t + G(u(t))\dl W(t),\quad t\in[0,T],\\
u(0)&=u_0,
\end{aligned}
\end{equation}
where $W=(W(t))_{t\in[0,T]}$ is a Wiener process in some Hilbert space $U$, and the nonlinear operators $F$ and $G$ are assumed to satisfy appropriate global Lipschitz conditions. 

We are in particular interested in an explicit bound for the $L_2(\dom)$-Sobolev regularity of $u$, which is closely connected to the order of convergence that can be achieved by {\it uniform} numerical approximation methods if the error is measured in $L_2(\dom)$. 
In this respect, our result complements the Besov regularity results in \cite{CioDahKin11}, \cite{CioKimLee12}, which are related to the order of convergence for {\it non-uniform}, {\it adaptive} approximation methods. We refer to \cite{DeV98} or \cite[Section~1]{CioDahKin11} for details on the connection between regularity and approximation.

It is well known that the solutions to boundary value problems on non-smooth domains may have singularities at the boundary. In the {\it deterministic} setting, the singular behavior of the solutions has been analysed by many authors, e.g., by Borsuk and Kondratiev~\cite{BorKon06}, Dauge~\cite{Dau88}, Grisvard~\cite{Gri85}, \cite{Gri87}, \cite{Gri92}, \cite{Gri95}, Jerison and Kenig~\cite{JerKen81}, \cite{JerKen95}, Kozlov, Maz'ya and Ro{\ss}mann \cite{KozMazRos97}, \cite{KozMazRos01}, \cite{MazRos10} and Kweon \cite{Kwe12}, just to mention a few. For piecewise smooth domains, the singularities can be described more or less explicitly. In the {\it stochastic} parabolic case, singularities at the boundary occur naturally even on smooth domains, cf.~\cite{Fla90}, \cite[Section~1]{Kry94}. The reason is the low regularity of the noise term in time, which is in general incompatible with the boundary data unless the noise vanishes near the boundary. Thus, there are (at least) two possible sources for singularites of the solution $u=(u(t))_{t\in[0,T]}$ to Eq.~\eqref{SHE}: The corners of the boundary $\partial \dom$ (as in the deterministic case) and the irregularity of the driving Wiener process $W=(W(t))_{t\in[0,T]}$.

The Dirichlet boundary value problem for stochastic parabolic equations has been studied with the help of weighted Sobolev spaces $H^r_{p,\theta}(\dom)$, $r\geq0$, by N.V.~Krylov and collaborators; see, e.g., Krylov \cite{Kry94}, Krylov and Lototsky~\cite{KryLot99}, \cite{KryLot99b} and Kim~\cite{Kim04}, \cite{Kim11}. These spaces are such that the higher order derivatives of functions belonging to them are allowed to explode at the boundary. They have been used in the first place to handle the influence of the noise at the boundary for equations on smooth domains (\cite{Kim04}, \cite{Kry94}, \cite{KryLot99}, \cite{KryLot99b}), but they are also well-suited to treat stochastic equations on non-smooth domains (\cite{Kim11}). However, the regularity results in terms of weighted Sobolev spaces do not imply explicit bounds for the regularity in scales of Sobolev spaces without weights. Moreover, the results in \cite{Kim11} are the outcome of a worst-case analysis: the only assumption on the domain $\dom$ is that the Hardy inequality holds, but no specific geometric features of simple domains (such as polygonal domains) are exploited to optimize the results for such domains. As a consequence, there is no explicit description of the singularities of the solution that are due to the shape of the domain.
The situation is in a certain sense similar when considering regularity results that have been obtained in the framework of other approaches to stochastic PDEs, such as the semigroup approach; see, e.g., Da Prato and Zabzcyk \cite{DaPraZab}, Jentzen and R\"ockner \cite{JenRoeck12}, Kruse and Larsson \cite{KruLar12}, van Neerven, Veraar and Weis \cite{NeeVerWei08}, \cite{NeeVerWei12}. There, the spatial regularity of the solution is typically measured in terms of the domains of fractional powers of the governing linear operator; in our case, in tems of the spaces $D\big((-\Lap)^{r/2}\big)$, $r\geq0$. However, for non-smooth domains, the regularity of the solution in this scale differs from the regularity in the $L_2$-Sobolev scale $H^r(\dom)$, $r\geq0$. For instance, for non-convex polygonal domains, the functions in $D(\Lap)$ are in general {\it not} in the Sobolev space $H^{1+\pi/\gamma}(\dom)$, where $\gamma\in(\pi,2\pi)$ is the largest interior angle at a vertice of $\partial\dom$, cf.~\cite{Gri85}. 
Thus, if one applies the typical regularity results from the semigroup approach to SPDEs to equations on non-smooth domains, 
the spatial singularities of the solution process induced by the shape of the domain remain somewhat hidden behind the abstract framework.

We present a regularity result concerning the solution $u=(u(t))_{t\in[0,T]}$ to Eq.~\eqref{SHE} which, on the one hand, gives an explicit bound for the $L_2(\dom)$-Sobolev regularity of $u$ and, on the other hand, describes the singular behavior of $u$ induced by the shape of the domain.
It is based on and improves in several directions the corresponding result in \cite[Chapter 4]{Lin11}. 

To give a first description of the result, assume for simplicity that, in a neighborhood of zero, the domain $\dom$ coincides with the sector
\[\big\{x\in\R^2:x=(x_1,x_2)=(r\cos\theta,r\sin\theta),\;r>0,\;\theta\in (0,\gamma)\big\},\]
where $\gamma\in(\pi,2\pi)$. Also assume that all interior angles at vertices of $\partial\dom$ except the one at zero are smaller than $\pi$. Set $\alpha:=\pi/\gamma\in (1/2,1)$ and consider the corner singularity function  for the Poisson problem
\begin{equation*}\label{defSIntro}
S(x):=\eta(x)r^\alpha\sin(\alpha\theta),\quad x=(r\cos\theta,r\sin\theta)\in\dom,
\end{equation*}
where $\eta$ is a smooth cut-off function that equals one near zero and vanishes in a neighborhood
of the sides of $\partial\dom$ which do not end at zero. Assume that $u$ is continuous in $L_2(\dom;\R)$ and that the noise term in \eqref{SHE} is regular enough for $u$ to take values in $H^1_0(\dom)$ and to satisfy $\sup_{t\in[0,T]}\E\nnrm{u(t)}{H^1_0(\dom)}^2<\infty$, where $H^1_0(\dom)$ is the $L_2(\dom)$-Sobolev space of order one with zero Dirichlet boundary condition. Denote by $u_+$ the extension of $u$ by zero to the whole time axis $\R$ and pick $s>1/2$. Then, if $(\Omega,\cA,\wP)$ is the underlying probability space, Theorem~\ref{mainRes} below states that there exist
\[u_{+,\mathrm R}\in L_2\big(\Omega;H^{-s}(\R)\hat\otimes H^2(\dom)\big)\quad\text{ and }\quad \Phi\in L_2\big(\Omega;H^{(1-\alpha)/2-s}(\R)\big)\]
with $\supp\Phi(\omega)\subset [0,\infty)$ for all $\omega\in\Omega$, such that the decomposition
\begin{equation}\label{mainResIntro}
u_+=u_{+,\mathrm R}+\Phi* E_0\, S
\end{equation}
holds as an equality in the space $L_2(\Omega;H^{-s}(\R)\hat\otimes H^1_0(\dom))$. Here $\hat\otimes$ denotes the Hilbert-Schmidt tensor product, $*$ is the convolution in time, and $E_0:\R\times\dom\to\R$ is an auxiliary kernel function with support in $[0,\infty)\times\dom$.
Using a linear and bounded extension operator from $H^s(0,T)$ to $H^s(\R)$, we obtain in Corollary~\ref{mainResCor} a corresponding decomposition 
\begin{equation}\label{mainResCorIntro}
u=u_{\mathrm R}+u_{\mathrm S}
\end{equation}
in the space $L_2(\Omega;(H^s(0,T))'\hat\otimes H^1_0(\dom))$. The regular part $u_{\mathrm R}$ satisfies 
\[u_{\mathrm R}\in L_2\big(\Omega;(H^s(0,T))'\hat\otimes H^2(\dom)\big)\] 
and the singular part $u_{\mathrm S}$ contains the corner singularity function $S$. The precise meaning of the decompositions \eqref{mainResIntro} and \eqref{mainResCorIntro} is explained in Sections~\ref{Setting and assumptions} and \ref{Main result} below.  We also derive estimates for the norms of $u_{+,\mathrm R}$, $u_{\mathrm R}$ and $\Phi$, and we show that 
\[\Phi(\omega)* E_0\, S\notin H^{-s}(\R)\hat\otimes H^{1+\alpha}(\dom),\quad u_{\mathrm S}(\omega)\notin (H^s(0,T))'\hat\otimes H^{1+\alpha}(\dom)\]
whenever $\Phi(\omega)$ is not zero. The random element $\Phi\in L_2(\Omega;H^{(1-\alpha)/2-s}(\R))$ is determined by $u$, $F$, $G$ and $W$.

The fact that the components in the decompositions \eqref{mainResIntro} and \eqref{mainResCorIntro} have negative regularity in time is owed to the low temporal regularity of the driving Wiener process $W=(W(t))_{t\in[0,T]}$. It is, so to speak, the price we pay for unveiling the structure of that part of the spatial singular behavior of $u=(u(t))_{t\in[0,T]}$ which is due to the specific geometry of $\dom$. The decomposition \eqref{mainResIntro} can be considered as a stochastic version of Grisvard's result for the deterministic heat equation; cf.~\cite[Section~5]{Gri87}, \cite[Section~5.2]{Gri92}. We follow Grisvard's ansatz of using the Laplace transform w.r.t.\ the time variable $t$ in order to turn the equation into an elliptic equation with parameter. The solution to the elliptic equation can then be decomposed explicitly into a regular and a singular part. In a last step, the Laplace transform is inverted. The main difficulty in the stochastic case is to handle the irregularity of the noise and, connected with it, to handle the fact that the stochastic integrals are not defined pathwise, but in an $L_2(\wP)$-sense. Besides, it takes a careful analysis to keep track of the measurability in $\omega\in\Omega$ of all random objects appearing in the course of the calculations. We use It\^o's formula to transform Eq.~\eqref{SHE} into a random elliptic equation with complex parameter. The main technical tool to derive the necessary estimates for the regular and the singular part of the solution to the transformed equation is Lemma~\ref{X(phi)Modification}, which describes the effect of the temporal irregularity of the noise in an appropriate way. After choosing suitable versions of all random objects, the inverse transform can be carried out $\omega$ by $\omega$. We use a framework of tensor products of (duals of) Sobolev spaces to make sense of the resulting objects, which are random generalized functions in time when applied to spatial testfunctions. 

The article is structured as follows. In Section~\ref{Setting and assumptions} we describe the setting and all assumptions concering Eq.~\eqref{SHE} (Subsection~\ref{Stochastic heat equation}), the framework of tensor products of Sobolev spaces and how the solution process $u=(u(t))_{t\in[0,T]}$ is embedded into this framework (Subsection~\ref{The solution process as a tensor product-valued random variable}).
We follow the semigroup approach to SPDEs in Subsection~\ref{Stochastic heat equation}, but we note that this is not essential and that other approaches could be used to derive similar results. Several supplementary details concerning tensor products of Sobolev spaces are postponed to Appendix~\ref{Tensor products of Sobolev spaces}. In Section~\ref{Main result} we formulate our main result for the case of polygonal domains with exactly one non-convex corner in Theorem~\ref{mainRes}. Here, Corollary~\ref{mainResCor} is stated and proved, and the results are illustrated with conrete examples. The extensions of  Theorem~\ref{mainRes} and Corollary~\ref{mainResCor} to the general case of arbitrary bounded polygonal domains are stated Appendix~\ref{General bounded polygonal domains}. Auxiliary results concerning the Laplace transform (Subsection~\ref{A Paley-Wiener type theorem}) and the Helmholtz equation (Subsection~\ref{Estimates for the Helmholtz equation}) are collected in Section~\ref{Auxiliary results}. The proof of Theorem~\ref{mainRes} is given in Section~\ref{Proof of the main result}, which consists of three subsections concerning the Laplace transform of Eq.~\eqref{SHE} (Subsection~\ref{Laplace transform of the stochastic heat equation}), the decomposition of the transformed equation (Subsection~\ref{Decomposition of the transformed equation}) and the inverse transform (Subsection~\ref{Inverse transform}).

\vspace{0.2cm}
\noindent\textbf{Notation and Conventions.}
The Borel-$\sigma$-algebra on a normed space $X=(X,\nnrm{\cdot}X)$ w.r.t.\ the topology induced by the norm $\nnrm{\cdot}X$ is denoted by $\cB(X)$. If $X$ is a Banach space, $(M,\cM,\mu)$ a $\sigma$-finite measure space and $1\leq p<\infty$, we write $L_p(M,\cM,\mu; X)$ for the space of all ($\mu$-equivalence classes of) strongly measurable functions $f:M\to X$ with finite $L_p$-norm $\nnrm{f}{L_p(M,\cM,\mu;X)}:=(\int_M\nnrm{f}X^p\dl\mu)^{1/p}$. If the context is clear, we also write $L_p(M;X)$ instead of $L_p(M,\cM,\mu;X)$. For $X=\C$ we omit the notation of the image space, i.e., $L_p(M,\cM,\mu):=L_p(M,\cM,\mu;\C)$. If $D$ is a subset of $\R^d$ we set $L_2(D):=L_2(D,\cB(D),\lambda^d;\C)$ and $L_2(D;\R):=L_2(D,\cB(D),\lambda^d;\R)$, where $\lambda^d$ denotes Lebesgue measure. The $L_2$-Sobolev-Slobodeckij space of order $s\geq0$ on a domain $D\subset\R^d$ is denoted by $H^s(D)$; see Appendix~\ref{Tensor products of Sobolev spaces} for the definition. $H^s_0(D)$ is the closure of the space of compactly supported, smooth testfunctions $C_0^\infty(D)$ within $H^s(D)$. If $M$ or $D$ is the open interval $(0,T)$, we write $L_2(0,T;X)$, $H^s(0,T)$ and $H^s_0(0,T)$ instead of $L_2((0,T);X)$, $H^s((0,T))$ and $H^s_0((0,T))$. The spaces of rapidly decreasing, smooth functions and tempered distributions on $\R^d$ are denoted by $\cS(\R^d)$ and $\cS'(\R^d)$, respectively. All spaces of (generalized) functions are understood as complex vector spaces of $\C$-valued functions, unless explicitly indicated otherwise, e.g., by writing $L_2(D;\R)$, $H^s(D;\R)$ and $H^s_0(D;\R)$. By $\mathcal F:\mathcal S'(\R^d)\to\mathcal S'(\R^d)$ we denote the Fourier transform on $S'(\R^d)$, normed according to $(\mathcal Ff)(\xi)=\int_{\R^d} e^{-i\xi x}f(x)\,dx$, $\xi\in\R^d$, $f\in L_1(\R^d)$. All derivatives of (locally integrable) functions defined on domains in $\R^d$ are meant in the distributional sense.  
The duality form of a topological vector space $X$ and its (topological) dual $X'$ is denoted by $\langle\cdot,\cdot\rangle_{X\times X'}$, i.e., $\langle x,x'\rangle_{X\times X'}:=x'(x)$ for all $x\in X$, $x'\in X'$. The (topological) dual $\cH'$ of a Hilbert-space $\cH$ is always endowed with the strong dual topology and the respective norm $\nnrm{\cdot}{\cH'}:=\sup_{\nnrm{h}{\cH}\leq1}|\langle h,\cdot\rangle_{\cH\times\cH'}|$. The notation $X\hookrightarrow Y$ means that a topological vector space $X$ is linearly and continuously embedded into another topological vector space $Y$. Inner products $\langle\cdot,\cdot\rangle_\cH$ of complex Hilbert spaces $\cH$ are assumed to be conjugate linear in the second argument. Composite expressions following an expectation sign `$\E$' are evaluated prior to taking the expectation, e.g., $\E\nnrm{\ldots}\H^p:=\E(\nnrm{\ldots}\H^p)$. For separable Hilbert spaces $\cH$ and $\cG$, we denote by $\bo(\cH;\cG)$, $\hs(\cH;\cG)$, $\nuc(\cH;\cG)$ the spaces linear and bounded operators, Hilbert-Schmidt operators and nuclear operators, respectively. If $\cH=\cG$, we write  $\bo(\cH)$, $\hs(\cH)$ and $\nuc(\cH)$ instead of $\bo(\cH;\cH)$, $\hs(\cH;\cH)$ and $\nuc(\cH;\cH)$. Throughout the paper, $C$ denotes a positive and finite constant which may change its value with every new appearance.

\section{Setting and assumptions}
\label{Setting and assumptions}

\subsection{Stochastic heat equation}
\label{Stochastic heat equation}

Throughout this paper, $\dom\subset\R^2$ denotes a simply connected
and bounded open subset of $\R^2$ with polygonal boundary $\partial\dom$ such that $\dom$ lies only on one side of $\partial\dom$. The generic element in $\R^2$ is denoted by $x=(x_1,x_2)$. Let
\[\Lap: D(\Lap)\subset L_2(\dom)\to L_2(\dom)\]
be the Dirichlet-Laplacian with domain
\[D(\Lap)=\Big\{v\in H^1_0(\dom):\Delta v:=\frac{\partial^2}{\partial x_1^2}v+\frac{\partial^2}{\partial x_2^2}v \in L_2(\dom)\Big\}.\]
By $\big(e^{t\Lap}\big)_{t\geq0}$ we denote the analytic semigroup of contractions on $L_2(\dom)$ generated by $\Lap$. Note that the operators $\Lap$ and $e^{t\Lap}$, $t\geq0$, map real-valued functions to real-valued functions, i.e., they can also be considered as operators on $L_2(\dom;\R)$.

Let $(\Omega,\mathcal A,\wP)$ be a complete probability space, $T\in(0,\infty)$ and let $(\mathcal F_t)_{t\in[0,T]}$ be a normal filtration of sub-$\sigma$-algebras of $\mathcal A$. On $(\Omega,\cA,\wP)$ let $W=(W(t))_{t\in[0,T]}$ be a $U$-valued Wiener process w.r.t.\ $(\mathcal F_t)_{t\in[0,T]}$, $U$ being a real and separable Hilbert space. The covariance operator and the reproducing kernel Hilbert space of $W$ are denoted by $Q\in\nuc(U)$ and $\big(U_0,\langle \cdot,\cdot\rangle_{U_0}\big):=\big(Q^{1/2}U,\langle Q^{-1/2}\,\cdot\,,Q^{-1/2}\,\cdot\,\rangle_U\big)$, respectively. Here $Q^{-1/2}$ is the pseudo-inverse of $Q^{1/2}$. Standard references for this setting are \cite{DaPraZab}, \cite{PesZab}, \cite{PreRoeck}.

We are interested in the regularity of the mild solution $u=(u(t))_{t\in[0,T]}$ to Eq.\ \eqref{SHE},
where $F$ and $G$ are mappings from $L_2(\dom;\R)$ to $L_2(\dom;\R)$ and to $\bo(U_0;L_2(\dom;\R))$, respectively. We make the following assumptions on $F$, $G$ and the initial condition $u_0$.
\begin{assumption}\label{AssFBu0}
$G$ takes values in the space $\hs(U_0;L_2(\dom;\R))$ of Hilbert-Schmidt operators; the mappings $F:L_2(\dom;\R)\to L_2(\dom;\R)$ and $G:L_2(\dom;\R)\to \hs(U_0;L_2(\dom;\R))$ are globally Lipschitz continuous. The initial condition $u_0$ satisfies \[u_0\in L_2\big(\Omega,\mathcal F_0,\wP;H^1_0(\dom;\R)\big)\cap L_p\big(\Omega,\mathcal F_0,\wP;L_2(\dom;\R)\big)\]
for some $p>2$.
\end{assumption}

By a {\it mild solution} to Eq.\ \eqref{SHE} we mean an $(\cF_t)_{t\in[0,T]}$-predictable $L_2(\dom;\R)$-valued stochastic process $u=(u(t))_{t\in[0,T]}$ on $(\Omega,\cA,\wP)$ such that $\sup_{t\in[0,T]}\E\nnrm{u(t)}{L_2(\dom)}^2<\infty$ and for every $t\in[0,T]$ the equality
\[u(t)=e^{t\Lap}u_0+\int_0^te^{(t-s)\Lap}F(u(s))\dl s+\int_0^t e^{(t-s)\Lap}G(u(s))\dl W(s)\]
holds $\wP$-almost surely. It is well-known that in the described setting the concept of a mild solution is equivalent to the concept of a so-called weak solution; see, e.g., \cite[Theorem~9.15]{PesZab}. The following existence and regularity result is a consequence of \cite[Theorem~7.4]{DaPraZab} and \cite[Theorem 4.2]{KruLar12}, compare also \cite[Theorem~11.8]{PesZab} and \cite{JenRoeck12}.

\begin{thm}\label{thmKruLar}
Given Assumption \ref{AssFBu0}, there exists a unique (up to modifications) mild solution $u=(u(t))_{t\in[0,T]}$ to Eq.\ \eqref{SHE}. It has a unique (up to indistinguishability) continuous modification in $L_2(\dom;\R)$, i.e., a modification $\tilde u=(\tilde u(t))_{t\in[0,T]}$ such that for all $\omega\in\Omega$ the trajectory
\[[0,T]\ni t\mapsto\tilde u(\omega,t):=\tilde u(t)(\omega)\in L_2(\dom;\R)\]
is continuous. This modification satisfies
\begin{equation}\label{DPZ(7.17)}
\E\sup_{t\in[0,T]}\nnrm{\tilde u(t)}{L_2(\dom;\R)}^p<C\big(1+\E\nnrm{u_0}{L_2(\dom;\R)}^p\big).
\end{equation}
 Moreover, for all $t\in[0,T]$ we have $u(t)\in L_2(\Omega;H^1_0(\dom;\R))$ and $t\mapsto u(t)$ is continuous as a mapping from $[0,T]$ to $L_2(\Omega;H^1_0(\dom;\R))$.
\end{thm}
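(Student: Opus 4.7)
The plan is to derive the theorem by combining the two cited results, after verifying that Assumption~\ref{AssFBu0} matches their hypotheses: $F$ and $G$ are globally Lipschitz with $G$ taking Hilbert-Schmidt values in $L_2(\dom;\R)$, $\Lap$ generates an analytic contraction semigroup on $L_2(\dom)$, and $u_0$ has the required $L_2$- and $L_p$-integrability.

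First I would invoke \cite[Theorem~7.4]{DaPraZab} for existence, uniqueness, and the bound $\sup_{t\in[0,T]} \E\nnrm{u(t)}{L_2(\dom)}^2 < \infty$. This is the standard Banach fixed-point argument for the map $v \mapsto e^{t\Lap} u_0 + \int_0^t e^{(t-s)\Lap} F(v(s))\,\dl s + \int_0^t e^{(t-s)\Lap} G(v(s))\,\dl W(s)$ on the space of $(\cF_t)$-predictable $L_2(\dom;\R)$-valued processes equipped with the norm $v \mapsto \sup_{t\leq T}(\E\nnrm{v(t)}{L_2(\dom)}^2)^{1/2}$, using the Lipschitz bounds, the contraction $\nnrm{e^{t\Lap}}{\bo(L_2(\dom))} \leq 1$, and It\^o's isometry (which applies precisely because $G$ is $\hs(U_0;L_2(\dom;\R))$-valued).

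To upgrade to a continuous $L_2(\dom;\R)$-modification $\tilde u$ and to establish the $L_p$-sup bound \eqref{DPZ(7.17)}, I would apply the factorization method of Da Prato-Kwapie\'n-Zabczyk. Pick $\alpha \in (1/p, 1/2)$ and represent the stochastic convolution as $\tfrac{\sin(\pi\alpha)}{\pi}\int_0^t (t-s)^{\alpha-1} e^{(t-s)\Lap} Y(s)\,\dl s$ with $Y(s) := \int_0^s (s-r)^{-\alpha} e^{(s-r)\Lap} G(u(r))\,\dl W(r)$. Burkholder-Davis-Gundy applied to $Y$, combined with the HS-valuedness of $G$ and the contraction of $(e^{t\Lap})$, yields $\E \int_0^T \nnrm{Y(s)}{L_2(\dom)}^p \,\dl s < \infty$. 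A deterministic H\"older estimate on the convolution kernel $(t-s)^{\alpha-1}$ then delivers a continuous modification together with the $L_p(\Omega)$-sup bound on $\nnrm{\tilde u(t)}{L_2(\dom;\R)}$, with the $u_0$-dependence tracked through the fixed-point iteration to match the right hand side of \eqref{DPZ(7.17)}.

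Finally, for $u(t) \in L_2(\Omega; H^1_0(\dom;\R))$ and the $L_2(\Omega; H^1_0)$-continuity of $t \mapsto u(t)$ I would invoke \cite[Theorem~4.2]{KruLar12}. The key identification is $D((-\Lap)^{1/2}) = H^1_0(\dom;\R)$ with equivalent norms. The initial-value contribution $e^{t\Lap}u_0$ and the drift $\int_0^t e^{(t-s)\Lap} F(u(s))\,\dl s$ are handled by the analytic smoothing bound $\nnrm{(-\Lap)^{1/2}e^{t\Lap}}{\bo(L_2(\dom))} \leq Ct^{-1/2}$, the $H^1_0$-regularity of $u_0$, the Lipschitz bound on $F$, and integrability of $t^{-1/2}$ near zero. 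The main obstacle is the $H^1_0$-bound for the stochastic convolution, because the naive pointwise operator-norm estimate $\nnrm{(-\Lap)^{1/2}e^{s\Lap}}{\bo(L_2(\dom))}^2 = O(s^{-1})$ is not integrable in $s$. The remedy is to exploit the Hilbert-Schmidt structure of $G$ through the spectral decomposition of the self-adjoint operator $\Lap$: writing the HS-norm in a Dirichlet eigenbasis $(\phi_j)$ with eigenvalues $(\lambda_j)$, It\^o's isometry reduces everything to the integrals $\int_0^t \lambda_j e^{-2(t-s)\lambda_j}\,\dl s = (1 - e^{-2t\lambda_j})/2 \leq 1/2$, which are uniformly bounded in $j$; summing over the eigen-indices and using Lipschitz continuity of $G$ together with the already-established $L_2$-bound on $u$ closes the estimate, and $L_2(\Omega; H^1_0)$-continuity in $t$ follows by dominated convergence on the same representation.
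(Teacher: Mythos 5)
Your citation strategy is exactly the paper's: the paper offers no proof of Theorem~\ref{thmKruLar} beyond invoking \cite[Theorem~7.4]{DaPraZab} for existence, uniqueness, the continuous modification and \eqref{DPZ(7.17)}, and \cite[Theorem~4.2]{KruLar12} for the $H^1_0$-statements. Your first two steps (fixed point in $\sup_t(\E\nnrm{\cdot}{L_2}^2)^{1/2}$, then factorization with $\alpha\in(1/p,1/2)$) are the standard and correct内 content of the Da~Prato--Zabczyk result. The problem is your proposed mechanism for the critical $H^1_0$-estimate of the stochastic convolution. Writing $(\phi_j,\lambda_j)$ for the Dirichlet eigenpairs and $(e_k)$ for an orthonormal basis of $U_0$, It\^o's isometry gives
\[
\E\sgnnrm{(-\Lap)^{1/2}\!\int_0^t e^{(t-s)\Lap}G(u(s))\,\dl W(s)}{L_2(\dom)}^2
=\sum_{j,k}\int_0^t\lambda_j e^{-2(t-s)\lambda_j}\,\E\big|\langle G(u(s))e_k,\phi_j\rangle\big|^2\,\dl s,
\]
and applying $\int_0^t\lambda_j e^{-2(t-s)\lambda_j}\dl s\leq 1/2$ termwise only bounds this by $\tfrac12\sum_{j,k}\sup_s\E|\langle G(u(s))e_k,\phi_j\rangle|^2$, which is \emph{not} controlled by $\tfrac12\sup_s\E\nnrm{G(u(s))}{\hs(U_0;L_2(\dom))}^2$: the sum of suprema can diverge while the supremum of the sums stays bounded (concentrate the $(j,k)$-th coefficient on a time window of length $\sim\lambda_j^{-1}$ adjacent to $s=t$; since $\lambda_j\sim cj$ by Weyl's law, the resulting series behaves like $\sum_j 1/j$). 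Your computation closes only when the Hilbert--Schmidt coefficients are constant in $s$, i.e.\ essentially for additive noise; for multiplicative noise the step ``summing over the eigen-indices \dots closes the estimate'' is exactly where the proof breaks.

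The critical exponent genuinely requires an extra input, and this is what \cite{KruLar12} (and \cite{JenRoeck12}) actually supply: one first establishes the subcritical bound $\sup_t\E\nnrm{(-\Lap)^{\rho/2}u(t)}{L_2(\dom)}^2<\infty$ for some $\rho\in(0,1)$, where the naive operator-norm estimate works because $\int_0^t(t-s)^{-\rho}\dl s<\infty$; this yields H\"older continuity of $t\mapsto u(t)$ in $L_2(\Omega;L_2(\dom;\R))$, hence of $t\mapsto G(u(t))$ in $L_2(\Omega;\hs(U_0;L_2(\dom)))$; one then splits $G(u(s))=G(u(t))+\big[G(u(s))-G(u(t))\big]$, so that the frozen term receives your factor $1/2$ (the spectral computation is legitimate for an integrand constant in $s$) and the increment receives $(t-s)^{-1}\cdot(t-s)^{\rho}$, which is integrable. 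Without this (or an equivalent) bootstrap your third step does not go through; the remainder of the proposal, including the identification $D((-\Lap)^{1/2})=H^1_0(\dom;\R)$ and the treatment of the deterministic terms, is fine.
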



In the sequel we also write $u(\omega,t)$ or $u(\omega,t,\cdot)$ instead of $u(t)(\omega)$, and a trajectory $[0,T]\in t\mapsto u(\omega,t)\in L_2(\dom;\R)$ may be denoted by $u(\omega)$ or $u(\omega,\cdot)$. This notation is motivated by the viewpoint of considering the solution as a scalar function of $(\omega,t,x)\in \Omega\times[0,T]\times\dom$, which turns out to be convenient for our purpose.

Let us look at concrete examples for $W$, $U$, $U_0$ and $F$, $G$.
\begin{ex}\label{exSHE}
{\bf (i)} (additive trace class noise)
Let $W$ be an $L_2(\dom;\R)$-valued Wiener process and set $U:=L_2(\dom;\R)$. Define $G:L_2(\dom;\R)\to\hs(U_0;L_2(\dom;\R))$ as the constant mapping with value $\id{L_2(\dom;\R)}\in\bo(L_2(\dom;\R))\hookrightarrow\hs(U_0;L_2(\dom;\R))$. The embedding holds since the reproducing kernel Hilbert space $U_0$ of $W$ is embedded into $L_2(\dom;\R)$ via a Hilbert-Schmidt embedding. Let $f$ be a real-valued function on $\dom\times\R$ satisfiying the following condition: There exist $C>0$ and $b\in L_2(\dom;\R)$ such that, for all $x\in\dom$ and $\xi,\eta\in\R$,
\[|f(x,\xi)|\leq b(x)+C|\xi|,\quad |f(x,\xi)-f(x,\eta)|\leq C|\xi-\eta|.\]
Define $F:L_2(\dom;\R)\to L_2(\dom;\R)$ by
\[\big(F(v)\big)(x):=f\big(x,v(x)\big),\quad v\in L_2(\dom;\R),\;x\in\dom.\]
Then, the conditions on $F$ and $G$ in Assumption \ref{AssFBu0} are fulfilled and Eq.\ \eqref{SHE} is an abstract formulation of the problem
\begin{equation}\label{exProblem}
\left\{
\begin{aligned}
\dl u(t,x)&=\big[\Delta u(t,x)+f\big(x,u(t,x)\big)\big]\dl t + \dl W(t,x),\; &&(t,x)\in[0,T]\times\dom,\\
u(t,x)&=0, &&(t,x)\in[0,T]\times\partial\dom,\\
u(0,x)&=u_0(x), &&x\in\dom.
\end{aligned}
\right.
\end{equation}

{\bf (ii)} (multiplicative noise with sufficient smoothness)
Let $W$ be an $H^s(\dom;\R)$-valued Wiener process for some $s>1$ and set $U:=H^s(\dom;\R)$. Such a process can be obtained, e.g., by applying an integral operator on $L_2(\dom;\R)$ with sufficiently smooth kernel $k\in$$L_2(\dom\times\dom;\R)$ to a cylindrical Wiener process on $L_2(\dom;\R)$.
Let $g$ be a real-valued function on $\dom\times\R$ satisfying the same condition as formulated for the function $f$ in (i). Define $G:L_2(\dom;\R)\to \bo\big(H^s(\dom;\R);L_2(\dom;\R)\big)\hookrightarrow\hs(U_0;L_2(\dom;\R))$ by
\[\big(G(v)w\big)(x):=g\big(x,v(x)\big)w(x),\quad v\in L_2(\dom;\R),\;w\in H^s(\dom;\R),\;x\in\dom,\]
and let $F$ be defined as in (i).
Then, the conditions on $F$ and $G$ in Assumption \ref{AssFBu0} are fulfilled (note that we have the Sobolev embedding $H^s(\dom;\R)\hookrightarrow C_\text{b}(\dom;\R$)) and Eq.\ \eqref{SHE} is an abstract formulation of Problem \eqref{exProblem} if the first line in \eqref{exProblem} is replaced by
\[\dl u(t,x)=\big[\Delta u(t,x)+f\big(x,u(t,x)\big)\big]\dl t + g\big(x,u(t,x)\big)\dl W(t,x),\quad (t,x)\in[0,T]\times\dom.\]

{\bf (iii)} (multiplicative finite-dimensional noise)
Let $W=(W_1,\ldots,W_d)$ be a $d$-dimensional Wiener process and set $U:=U_0:=\R^d$.
Let $g_1,\ldots,g_d$ be real-valued functions on $\dom\times\R$ satisfying the same condition as formulated for the function $f$ in (i). Define $G:L_2(\dom;\R)\to\hs(U_0;L_2(\dom;\R))$ by
\[\big(G(v)w\big)(x):=\sum_{k=1}^d g_k\big(x,v(x)\big)w_k,\quad v\in L_2(\dom;\R),\;w=(w_1,\dots,w_d)\in \R^d,\;x\in\dom,\]
and let $F$ be defined as in (i).
Then, the conditions on $F$ and $G$ in Assumption \ref{AssFBu0} are fulfilled and Eq.\ \eqref{SHE} is an abstract formulation of Problem \eqref{exProblem} if the first line in \eqref{exProblem} is replaced by
\[\dl u(t,x)=\big[\Delta u(t,x)+f\big(x,u(t,x)\big)\big]\dl t + \sum_{k=1}^d g_k\big(x,u(t,x)\big)\dl W_k(t),\quad (t,x)\in[0,T]\times\dom.\]
\end{ex}

\begin{rem}
All the noise terms in Example \ref{exSHE} can be rewritten in the general form
$\sum_{k=1}^\infty g_k\big(x,u(t,x)\big)\dl W_k(t)$, where the $g_k$'s are suitably chosen functions on $\dom\times\R$ and the $W_k$'s are independent one-dimensional Wiener processes; compare, e.g., \cite[Section 8.2]{Kry99}.
\end{rem}

\subsection{The solution process as a tensor product-valued random variable}
\label{The solution process as a tensor product-valued random variable}

Our main result, Theorem \ref{mainRes}, and Corollary \ref{mainResCor} are formulated in terms of tensor products of Sobolev spaces of possibly negative order. In the present subsection we define the tensor product spaces, point out their natural embeddings and describe how the mild solution $u=(u(t))_{t\in[0,T]}$ to Eq.~\eqref{SHE} can be considered as a tensor product-valued random variable. Since the tensor product spaces we consider are rather non-standard in the context of stochastic evolution equations, we collect several supplementary details and references in Appendix~\ref{Tensor products of Sobolev spaces}.

With regard to the natural embeddings of tensor products of Sobolev spaces, it is convenient to define the (Hilbert-Schmidt) tensor product of two Hilbert spaces as the space of Hilbert-Schmidt functionals on the cartesian product of the {\it duals} of these spaces. The connection to alternative definitions in the literature is described in Appendix~\ref{Tensor products of Sobolev spaces}.
Let $\cH$ and $\cG$ be separable complex Hilbert spaces with orthonormal bases $(h_j)_{j\in\N}$ and $(g_k)_{k\in\N}$, respectively. Following \cite[Section 2.6]{KaRi83} we call a  {\it Hilbert-Schmidt functional on $\cH\times\cG$} a bounded bilinear functional $f:\cH\times\cG\to\C$, $(h,g)\mapsto f(h,g)$ such that
$\sum_{j,k\in\N}\big|f(h_j, g_k)\big|^2<\infty.$
The infinite sum does not depend on the specific choice of the orthonormal bases, and its square root defines a norm that makes the space of Hilbert-Schmidt functionals into a separable Hilbert space; see \cite[Section 2.6]{KaRi83}. Here, boundedness of $f$ means  $\sup_{\nnrm{h}\cH,\nnrm{g}\cG\leq 1}|f(h,g)|<\infty$.

\begin{definition}\label{defTP}
The Hilbert-Schmidt tensor product $\cH\hat\otimes \cG$ of two separable complex Hilbert spaces $\cH$ and $\cG$ is defined as the space of Hilbert-Schmidt functionals on $\cH'\times \cG'$ with norm given by
\begin{equation*}
\nnrm{f}{\cH\hat\otimes \cG}:=\left(\sum_{j,k\in\N}|f(h_j',g_k')|^2\right)^{1/2},\qquad f\in \cH\hat\otimes \cG,
\end{equation*}
$(h_j')_{j\in\N}$ and $(g_k')_{k\in\N}$ being arbitrary orthonormal bases of $\cH'$ and $\cG'$. For $h\in \cH$ and $g\in \cG$ we denote by $h\otimes g\in \cH\hat\otimes \cG$ the functional defined by 
\begin{equation*}
h\otimes g(h',g')
:=\langle h,h'\rangle_{\cH\times \cH'}\langle g,g'\rangle_{\cG\times \cG'}=h'(h)g'(g),\qquad h'\in \cH',\,g'\in \cG'.
\end{equation*}
\end{definition}

Given an arbitrary domain $D\subset\R^d$, we denote by $H^s(D)$ the $L_2(D)$-Sobolev-Slobodeckij space of order $s\geq0$; see Appendix~\ref{Tensor products of Sobolev spaces} for the definition. It is well known that for $0\leq s_1\leq s_2$ the space $H^{s_2}(D)$ is densely embedded into $H^{s_1}(D)$ via the identity operator.

\begin{convention}\label{convTP}
We identify $L_2(D)$ with its (topological) dual space $(L_2(D))'$ via the isometric isomorphism
$L_2(D)\ni v\mapsto \langle\,\cdot\,,\overline v\rangle_{L_2(D)}\in (L_2(D))'$. Thus, by duality, we 
obtain a chain of continuous and dense (linear) embeddings
\begin{equation}\label{embeddingsHs}
H^{s_2}(D)\hookrightarrow H^{s_1}(D)\hookrightarrow L_2(D)= (L_2(D))'\hookrightarrow (H^{s_1}(D))'\hookrightarrow (H^{s_2}(D))',\quad 0\leq s_1\leq s_2.
\end{equation}
Moreover, for $s\geq0$ we identify $H^s(D)$ with its bidual $(H^s(D))'':=\big((H^s(D))'\big)'$ via the canonical isometric isomorphism $\i:H^s(D)\to(H^s(D))''$ given by $(\i(f))(g)=g(f)$ for $f\in H^s(D)$, $g\in(H^s(D))'$.
We also write $\langle\cdot,\cdot\rangle_{(H^s(D))'\times H^s(D)}$ for the duality form $\langle\cdot,\cdot\rangle_{(H^s(D))'\times(H^s(D))''}$.
\end{convention}

As usual, for $s\geq0$ we denote by $H^s_0(D)$ the closure of $C^\infty_0(D)$ in $H^s(D)$ and by $H^{-s}(D):=(H^s_0(D))'$ its dual space.
If $D=\R^d$, we have $H^{-s}(\R^d)=(H^s(\R^d))'$ with equal norms, and \eqref{embeddingsHs} reads 
\begin{equation*}
H^{s_2}(\R^d)\hookrightarrow H^{s_1}(\R^d)\hookrightarrow L_2(\R^d)= (L_2(\R^d))'\hookrightarrow H^{-s_1}(\R^d)\hookrightarrow H^{-s_2}(\R^d),\quad 0\leq s_1\leq s_2.
\end{equation*}

Theorem \ref{mainRes} and Corollary \ref{mainResCor} are formulated in terms of tensor product spaces of the form  $(H^s(I))'\hat\otimes H^r(\dom)$ and $(H^s(I))'\hat\otimes H^1_0(\dom)$, $r,s\geq0$, where $I=(0,T)$ or $I=\R$. By Definition~\ref{defTP} and Convention~\ref{convTP}, the elements of $(H^s(I))'\hat\otimes H^r(\dom)$ are Hilbert-Schmidt functionals on $H^s(I)\times (H^r(\dom))'=(H^s(I))''\times (H^r(\dom))'$; the elements of $(H^s(I))'\hat\otimes H^1_0(\dom)$ are Hilbert-Schmidt functionals on $H^s(I)\times H^{-1}(\dom)=(H^s(I))''\times (H^1_0(\dom))'$.
According to Proposition \ref{TPemb} in Appendix~\ref{Tensor products of Sobolev spaces}, we have natural embeddings
\begin{equation}\label{TPemb1}
(H^{s_1}(I))'\hat\otimes H^{r_2}(\dom)\hookrightarrow (H^{s_2}(I))'\hat\otimes H^{r_1}(\dom),\quad 0\leq s_1\leq s_2,\;0\leq r_1\leq r_2,
\end{equation}
given by the tenor products $\i\hat\otimes\j$ of the embeddings $\i:(H^{s_1}(I))'\hookrightarrow (H^{s_2}(I))'$ and $\j:H^{r_2}(\dom)\hookrightarrow  H^{r_1}(\dom)$ as in \eqref{embeddingsHs}. The image $\i\hat\otimes\j\,(f)\in (H^{s_2}(I))'\hat\otimes H^{r_1}(\dom)$ of some $f\in(H^{s_1}(I))'\hat\otimes H^{r_2}(\dom)$ is nothing but the restriction of the bilinear functional $f:H^{s_1}(I)\times (H^{r_2}(\dom))'\to\C$ to the smaller domain $H^{s_2}(I)\times (H^{r_1}(\dom))'$. 
 Also by Proposition \ref{TPemb},
we have natural embeddings 
\begin{equation}\label{TPemb3}
(H^{s_1}(I))'\hat\otimes H^1_0(\dom)\hookrightarrow(H^{s_2}(I))'\hat\otimes H^1_0(\dom)\hookrightarrow(H^{s_2}(I))'\hat\otimes H^1(\dom),\quad 0\leq s_1\leq s_2.
\end{equation}
Note, however, that the second embedding in \eqref{TPemb3} is not dense. 

Let us describe in which sense the mild solution $u=(u(t))_{t\in[0,T]}$ to Eq.~\eqref{SHE} will be considered as a tensor-product valued random variable. To this end, take an arbitrary (predictable) version of $u$. We know from Theorem~\ref{thmKruLar} that $u(t)\in H^1_0(\dom;\R)$ $\wP$-almost surely for all $t\in[0,T]$. By the Kuratowski-Suslin theorem we have $H^1_0(\dom;\R)\in\cB(L_2(\dom;\R))$, so that $P:=\big\{(\omega,t)\in\Omega\times[0,T]:u(\omega,t)\not\in H^1_0(\dom)\big\}$ belongs to $\cP$, the predictable $\sigma$-algebra w.r.t.\ the filtration $(\mathcal F_t)_{t\in[0,T]}$. Consequently, by redefining $u(\omega,t):=0$ for all $(\omega,t)\in P$ we obtain a predictable ($\cP/\cB(L_2(\dom;\R))$-measurable) modification of our original solution such that $u(\omega,t)\in H^1_0(\dom;\R)$ for all $(\omega,t)\in\Omega\times[0,T]$. We fix this modification $u=(u(t))_{t\in[0,T]}$ from now on. From the $\cP/\cB(L_2(\dom;\R))$-measurability of $u$ and a standard approximation argument we obtain the $\cP/\cB(H^1_0(\dom;\R))$-measurability of $u$. Moreover, since $\sup_{t\in[0,T]}\E\nnrm{u(t)}{H^1_0(\dom;\R)}<\infty$ by Theorem~\ref{thmKruLar} and $H^1_0(\dom;\R)\hookrightarrow H^1_0(\dom)$, we have 
\begin{equation}\label{uinH10}
u\in L_2\big(\Omega\times[0,T],\cP,\wP\otimes\dl t;H^1_0(\dom)\big).
\end{equation}
By $u_+=(u_+(t))_{t\in\R}$ we denote the extension
of $u$ by zero to the whole real line. We will consider $u$ and $u_+$ as random variables with values in the spaces
$L_2(0,T)\hat\otimes H^1_0(\dom)$ and $L_2(\R)\hat\otimes H^1_0(\dom)$, respectively.

\begin{proposition}\label{tildeuu+}
After possibly redefining $u$ on a $\wP$-null set, the definitions of the mappings
\begin{align*}
\tilde u:\;&\Omega\to L_2(0,T)\hat\otimes H^1_0(\dom),\;\omega\mapsto\tilde u(\omega),\\
\tilde u_+:\;&\Omega\to L_2(\R)\hat\otimes H^1_0(\dom),\;\omega\mapsto\tilde u_+(\omega)
\end{align*} 
by
\[
\begin{aligned}
\tilde u(\omega)(\phi,\varphi):= \tilde u(\omega,\phi,\varphi):=
\int_0^T\langle u(\omega,t),\varphi&\rangle_{H^1_0(\dom)\times  H^{-1}(\dom)}\phi(t)\dl t,\\
&\phi\in L_2(0,T),\;\varphi\in  H^{-1}(\dom).
\end{aligned}
\]
and
\[
\begin{aligned}
\tilde u_+(\omega)(\phi,\varphi):= \tilde u_+(\omega,\phi,\varphi):=
\int_\R\langle u_+(\omega,t),&\varphi\rangle_{H^1_0(\dom)\times  H^{-1}(\dom)}\phi(t)\dl t,\\
&\phi\in L_2(\R),\;\varphi\in H^{-1}(\dom)
\end{aligned}
\]
are meaningful, and $\tilde u$ and $\tilde u_+$ belong to the spaces $L_2\big(\Omega,\cF_T,\wP;L_2(0,T)\hat\otimes H^1_0(\dom)\big)$ and
$L_2\big(\Omega,\cF_T,\wP;L_2(\R)\hat\otimes H^1_0(\dom)\big)$, respectively.
\end{proposition}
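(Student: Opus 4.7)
The plan is to identify each $\tilde u(\omega)$ and $\tilde u_+(\omega)$ with the trajectory $u(\omega,\cdot)$ (respectively, its zero-extension $u_+(\omega,\cdot)$) under the canonical isometric isomorphism between the Bochner space $L_2(I;H^1_0(\dom))$ and the Hilbert-Schmidt tensor product $L_2(I)\hat\otimes H^1_0(\dom)$, for $I=(0,T)$ and $I=\R$ respectively.

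To set this up I would first extract pathwise regularity. By \eqref{uinH10} combined with Tonelli, the set $N:=\{\omega\in\Omega:\int_0^T\nnrm{u(\omega,t)}{H^1_0(\dom)}^2\dl t=\infty\}$ is a $\wP$-null set in $\cF_T$, and by normality of the filtration it lies in $\cF_0$. Redefining $u(\omega,\cdot):=0$ for $\omega\in N$ therefore preserves predictability while guaranteeing $u(\omega,\cdot)\in L_2(0,T;H^1_0(\dom))$ for every $\omega\in\Omega$. Cauchy-Schwarz then shows that for each such $\omega$ the bilinear forms defining $\tilde u(\omega)$ and $\tilde u_+(\omega)$ are well-defined and separately continuous on $L_2(0,T)\times H^{-1}(\dom)$ and $L_2(\R)\times H^{-1}(\dom)$ respectively.

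Next I would verify the Hilbert-Schmidt property and compute the tensor-product norm. Fix orthonormal bases $(\phi_j)$ of $L_2(0,T)$ and $(\varphi_k)$ of $H^{-1}(\dom)$, and let $R:H^{-1}(\dom)\to H^1_0(\dom)$ denote the conjugate-linear Riesz isomorphism, so that $(R\varphi_k)$ is an orthonormal basis of $H^1_0(\dom)$ and $\langle v,\varphi_k\rangle_{H^1_0(\dom)\times H^{-1}(\dom)}=\langle v,R\varphi_k\rangle_{H^1_0(\dom)}$. Applying Parseval first in $L_2(0,T)$ to $t\mapsto\langle u(\omega,t),\varphi_k\rangle$ and then in $H^1_0(\dom)$ pointwise in $t$, combined with Tonelli, should yield
\[\sum_{j,k\in\N}|\tilde u(\omega,\phi_j,\varphi_k)|^2=\int_0^T\nnrm{u(\omega,t)}{H^1_0(\dom)}^2\dl t=\nnrm{u(\omega,\cdot)}{L_2(0,T;H^1_0(\dom))}^2.\]
Hence $\tilde u(\omega)\in L_2(0,T)\hat\otimes H^1_0(\dom)$ with tensor-product norm equal to the Bochner norm of the trajectory. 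For $\tilde u_+$, the integrand vanishes outside $[0,T]$ and, since the Hilbert-Schmidt norm is basis-independent, the analogous identity persists with $L_2(\R)$ in place of $L_2(0,T)$.

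Finally, for $\cF_T$-measurability and $L_2$-integrability in $\omega$: since the modified $u$ is $\cP/\cB(H^1_0(\dom))$-measurable with $\cP\subset\cF_T\otimes\cB([0,T])$ and $H^1_0(\dom)$ is separable, the standard isometric isomorphism $L_2(\Omega\times[0,T];H^1_0(\dom))\cong L_2(\Omega;L_2(0,T;H^1_0(\dom)))$ guarantees that $\omega\mapsto u(\omega,\cdot)$ is strongly $\cF_T$-measurable into $L_2(0,T;H^1_0(\dom))$. Composing with the isometric identification from the previous step yields the claimed strong measurability of $\tilde u$, and the norm identity combined with \eqref{uinH10} delivers the required $L_2(\Omega,\cF_T,\wP)$-integrability; the argument for $\tilde u_+$ is identical. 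The main technical point I anticipate is the Hilbert-Schmidt norm computation, where the Riesz identification of $H^{-1}(\dom)$ with $H^1_0(\dom)$ is precisely what converts the sum over the dual basis into the squared $H^1_0(\dom)$-norm of $u(\omega,t)$; the remainder is routine Fubini/Bochner machinery.
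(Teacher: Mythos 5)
Your proposal is correct and follows essentially the same route as the paper: reduce to the fact that ($\wP$-almost) every trajectory lies in $L_2(0,T;H^1_0(\dom))$ via \eqref{uinH10} and Tonelli--Fubini, redefine $u$ on the exceptional null set, obtain strong $\cF_T$-measurability of $\omega\mapsto u(\omega,\cdot)$, and then pass through the isometric isomorphism $L_2(I;H^1_0(\dom))\cong L_2(I)\hat\otimes H^1_0(\dom)$. The only difference is cosmetic: the paper cites this isomorphism as a known fact, whereas you verify the isometry explicitly by the Parseval/Riesz computation, which is a correct proof of that cited fact.
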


\begin{proof}
By \eqref{uinH10} and the theorems of Tonelli and Fubini, we know that all trajectories
\[u(\omega)=u(\omega,\cdot):[0,T]\to H^1_0(\dom),\;t\mapsto u(\omega,t),\qquad \omega\in\Omega,\] are $\mathcal B([0,T])/\mathcal B(H^1_0(\dom))$-measurable and that $u(\omega)\in L_2(0,T;H^1_0(\dom))$ for $\wP$-almost all $\omega\in\Omega$. After redefining $u(\omega):=0$ for all $\omega\in\Omega$ such that $u(\omega)\notin L_2(0,T;H^1_0(\dom))$, another application of the theorems of Tonelli and Fubini shows that the mapping $\Omega\ni\omega\mapsto u(\omega)\in L_2(0,T;H^1_0(\dom))$ is
$\cF_T/\mathcal B\big(L_2(0,T;H^1_0(\dom))\big)$-measurable; compare~\cite[Proposition~3.18]{DaPraZab}. Moreover, one has
$\int_\Omega\nnrm{u(\omega)}{L_2(0,T;H^1_0(\dom))}^2\wP(\dl\omega)=\nnrm{u}{L_2(\Omega\times[0,T],\mathcal P,\wP\otimes\dl t;H^1_0(\dom))}^2$.
The assertion concerning $\tilde u$ now follows from the fact that the operator $J:L_2(0,T;H^1_0(\dom))\mapsto L_2(0,T)\hat\otimes H^1_0(\dom)$,
which maps $f\in L_2(0,T;H^1_0(\dom))$ to the bilinear functional
\[(Jf)(\phi,\varphi):L_2(0,T)\times H^{-1}(\dom)\to\C,\;(\phi,\varphi)\mapsto
\int_0^T\langle f(t),\varphi\rangle_{H^1_0(\dom)\times H^{-1}(\dom)}\phi(t)\dl t,\]
is an isometric isomorphism. The assertion concerning $\tilde u_+$ follows analogously.
\end{proof}

We will always take for granted the redefinition of $u$ on a $\wP$-null set mentioned in Proposition \ref{tildeuu+}. This means that {\it all}
trajectories $[0,T]\ni t\mapsto u(\omega,t)\in H^1_0(\dom)$ and $\R\ni t\mapsto u_+(\omega,t)\in H^1_0(\dom)$, $\omega\in\Omega$, belong to $L_2(0,T;H^1_0(\dom))$ and $L_2(\R;H^1_0(\dom))$, respectively;
the mappings $\Omega\ni\omega\mapsto u(\omega)\in L_2(0,T;H^1_0(\dom))$ and $\Omega\ni\omega\mapsto u_+(\omega)\in L_2(\R;H^1_0(\dom))$ are
$\cF_T/\mathcal B\big(L_2(0,T;H^1_0(\dom))\big)$-measurable and $\cF_T/\mathcal B\big(L_2(\R;H^1_0(\dom))\big)$-measurable, respectively.

\begin{convention}\label{conventionu}
We identify the mild solution $u$ to Eq.\ \eqref{SHE} and its extension by zero to
the whole real line $u_+$ with the mappings $\tilde u$ and $\tilde u_+$ described
in Proposition \ref{tildeuu+}. We set $u(\omega,\phi,\varphi):=\tilde u(\omega,\phi,\varphi)$ and
$u_+(\omega,\phi,\varphi):=\tilde u_+(\omega,\phi,\varphi)$ for $\omega\in\Omega$,
$\varphi\in H^{-1}(\dom)$, $\phi\in L_2(0,T)$ and $\phi\in L_2(\R)$, respectively. In this sense we
have
\[u\in L_2\big(\Omega,\cF_T,\wP;L_2(0,T)\hat\otimes H^1_0(\dom)\big),
\quad u_+\in L_2\big(\Omega,\cF_T,\wP;L_2(\R)\hat\otimes H^1_0(\dom)\big).\]
\end{convention}

Note that, due to the embedding \eqref{TPemb3}, we have in particular
\[u\in L_2\big(\Omega,\cF_T,\wP;(H^s(0,T))'\hat\otimes H^1_0(\dom)\big),\quad u_+\in L_2\big(\Omega,\cF_T,\wP;H^{-s}(\R)\hat\otimes H^1_0(\dom)\big)\]
for all $s\geq0$.

\section{Main result}
\label{Main result}
Before formulating the main result we need to introduce some further notation. In order to keep the notational complexity at a reasonable level we make the following additional assumption on
the domain $\dom\subset\R^2$. We remark, however, that our results readily generalize to arbitrary bounded polygonal
domains as defined in Subsection \ref{Stochastic heat equation}; see Appendix \ref{General bounded polygonal domains} for the formulation of the results in the general case.
\begin{assumption}\label{AssO}
The domain $\dom$ has {\it exactly one non-convex corner}. The corresponding vertex is zero and the corresponding interior angle is denoted
by $\gamma\in(\pi,2\pi)$. In a neighborhood of zero, $\dom$ coincides
with the sector
\[\big\{x\in\R^2:x=(r\cos\theta,r\sin\theta),\;r>0,\;\theta\in (0,\gamma)\big\}.\]
\end{assumption}
Let $\eta\in C^\infty(\overline\dom;\R)$ be a smooth cut-off function that depends only on $r=\sqrt{x_1^2+x_2^2}$, equals one in a neighborhood of zero and vanishes in a neighborhood
of the sides of $\partial\dom$ which do not end at zero.
Set $\alpha:=\pi/\gamma\in (1/2,1)$ and define $S\in H^1_0(\dom)$ by
\begin{equation}\label{defS}
S(x):=\eta(x)r^\alpha\sin(\alpha\theta),\quad x=(x_1,x_2)=(r\cos\theta,r\sin\theta)\in\dom.
\end{equation}
The function $S$ belongs to $H^s(\dom)$ if, and only if, $s<1+\alpha$; see \cite[Theorem 1.4.5.3]{Gri85}. It represents the corner singularity for the Poisson problem on $\dom$ with zero-Dirichlet boundary condition; see \cite{Gri85}, \cite{Gri92}. That is, given $g\in L_2(\dom)$ and $w\in H^1_0(\dom)$ with $-\Delta w=g$, there exist a unique function $w_\text{R}\in H^2(\dom)\cap H^1_0(\dom)$ and a unique constant $c\in\C$ such that $w=w_{\text{R}}+cS$. 
It follows from \cite[Proposition 2.5.6]{Gri92} (compare also \cite[Section 2]{Gri87}) that $c=\langle g,v_0\rangle_{L_2(\dom)}$, where $v_0\in L_2(\dom;\R)$ is 
defined as $v_0=(1/\pi)(\psi_0-\varphi_0)$ with
\[\psi_0(x):=\eta(x)r^{-\alpha}\sin(\alpha\theta),\quad x=(x_1,x_2)=(r\cos\theta,r\sin\theta)\in\dom, \]
and $\varphi_0\in D(\Lap)$ being the unique solution in $H^1_0(\dom)$ to the problem $\Delta\varphi_0=\Delta\psi_0$. (Note that $\psi_0$ does not belong to $H^1(\dom)$, but it satisfies $\Delta\psi_0\in L_2(\dom)$ since it is harmonic near $0$.) For $z\in\C\setminus \sigma(\Lap)$ we define $v(z)\in L_2(\dom)$ by
\begin{equation}\label{v1}
\begin{aligned}
v(z)&:=v_0-z\big(z\id{L_2(\dom)}-\Lap\big)^{-1}v_0\\
&\;=\left[\id{L_2(\dom)}-z\big(z\id{L_2(\dom)}-\Lap\big)^{-1}\right]\frac 1\pi(\psi_0-\varphi_0),
\end{aligned}
\end{equation}
where $\big(z\id{L_2(\dom)}-\Lap\big)^{-1}\in\bo(L_2(\dom))$ is the $z$-resolvent of $\Lap$.

Further, we define a kernel function $E_0:\R\times\dom\to \R$ by
\begin{equation}\label{defE0}
E_0(t,x):=\one_{(0,\infty)}(t)(2\sqrt\pi)^{-1}t^{-3/2}r e^{-r^2/(4t)},\quad t\in\R,\;x=(x_1,x_2)=(r\cos\theta,r\sin\theta)\in\dom.
\end{equation}
For fixed $x\in\dom$ the function $t\mapsto E_0(t,x)$ is the inverse Laplace transform of $(0,\infty)+i\R\ni z\mapsto e^{-r\sqrt z}\in\C$; see \cite[Section 8.4]{Foe93} or \cite[Exercise 3A/3]{Gue91}.

Finally, let Assumption \ref{AssFBu0} hold, let $u=(u(t))_{t\in[0,T]}$ be the mild solution to Eq.~\eqref{SHE} and define
\begin{equation}\label{H(z)}
H(z):=\int_0^T e^{-zt}F(u(t))\dl t+\int_0^T e^{-zt}G(u(t))\dl W(t)-e^{-zT}u(T)+u_0,\quad z\in \C.
\end{equation}
The first integral in \eqref{H(z)} is an $\omega$-wise Bochner integral in $L_2(\dom)$. For  every $\omega\in\Omega$, all integrals $\int_0^Te^{-zt}F(u(\omega,t))\dl t,\;z\in\C$, exist since we have
$\int_0^T\nnrm{e^{-zt}F(u(\omega,t))}{L_2(\dom)}\dl t\leq$\linebreak$C\int_0^Te^{-\Re zt}(1+\nnrm{u(\omega,t)}{L_2(\dom)})\dl t<\infty$.
Moreover, 
\begin{align*}
\E\sgnnrm{\int_0^Te^{-zt}F(u(t))\dl t}{L_2(\dom)}^2
&\leq C\E\int_0^T\gnnrm{e^{-zt}F(u(t))}{L_2(\dom)}^2\dl t\\
&\leq C\E\int_0^Te^{-2\Re zt}\left(1+\nnrm{u(t)}{L_2(\dom;\R)}^2\right)\dl t<\infty.
\end{align*}
The second integral in \eqref{H(z)} is an $L_2(\dom)$-valued stochastic integral; for fixed $\omega$ and $t$,
$e^{-zt}G(u(\omega,t))$ is the operator in $\hs(L_2(\dom;\R);L_2(\dom))$
that maps $w\in L_2(\dom;\R)$ to \linebreak$e^{-zt}G(u(\omega,t))w\in L_2(\dom)$.
By It\^{o}'s isometry and the Lipschitz property of $G$, 
\begin{align*}
\E\sgnnrm{\int_0^Te^{-zt}G(u(t))\dl W(t)}{L_2(\dom)}^2
&=\E\int_0^T\gnnrm{e^{-zt}G(u(t))}{\hs(L_2(\dom;\R);L_2(\dom))}^2\dl t\\
&\leq C\E\int_0^Te^{-2\Re zt}\left(1+\nnrm{u(t)}{L_2(\dom;\R)}^2\right)\dl t<\infty,
\end{align*} so that we obtain
\begin{equation}\label{H(z)inL2}
H(z)\in L_2(\Omega,\cF_T,\wP;L_2(\dom)), \; z\in\C.
\end{equation}
We will later show (Lemma \ref{Gholom}) that the $L_2(\dom)$-valued random field $(H(z))_{z\in\C}$ has a holomorphic modification, i.e.,
a modification such that for all $\omega\in\Omega$ the mapping $\C\ni z\mapsto H(\omega,z):=H(z)(\omega)\in L_2(\dom)$ is holomorphic. We fix such a modification
once and for all.
\begin{rem}\label{RemStoIntCR}
The Hilbert-space theory of infinite-dimensional stochastic integrals is usually developed in terms of {\it real} Hilbert-spaces, cf.~\cite{DaPraZab}, \cite{Met}, \cite{MetPel80}, \cite{PesZab}, \cite{PreRoeck}. In the context of stochastic integrals such as in \eqref{H(z)}
we will in general consider $\C$-valued functions as $\R^2$-valued functions, and we will in general understand the
stochastic integrals in terms of the respective real Hilbert-spaces of
$\R^2$-valued functions. We do not indicate this explicitly, but we will point out this identification whenever it is needed.
\end{rem}

Here is our main result.

\begin{thm}\label{mainRes}
Let Assumptions \ref{AssFBu0} and \ref{AssO} hold, let $u=(u(t))_{t\in[0,T]}$ be the mild solution to Eq.~\eqref{SHE} and let $u_+$ be its extension by zero to the whole real line, considered as an element of
$L_2\big(\Omega,\cF_T,\wP;L_2(\R)\hat\otimes H^1_0(\dom)\big)$ as described in Subsection~\ref{The solution process as a tensor product-valued random variable}. Let $s> 1/2$ and set $\alpha:=\pi/\gamma$. 

There exist
\[u_{+,\mathrm R}\in L_2\big(\Omega,\cF_T,\wP;H^{-s}(\R)\hat\otimes H^2(\dom)\big)\cap L_2\big(\Omega,\cF_T,\wP;H^{-s}(\R)\hat\otimes H^1_0(\dom)\big)\]
and
\[\Phi\in L_2\big(\Omega,\cF_T,\wP; H^{(1-\alpha)/2-s}(\R)\big)\]
with $\supp\Phi(\omega)\subset [0,\infty)$ for all $\omega\in\Omega$
(in the sense of distributions) such that the equality
\begin{equation*}\label{mainRes1}
u_+=u_{+,\mathrm R}+\Phi*E_0\,S
\end{equation*}
holds in $L_2\big(\Omega,\cF_T,\wP;H^{-s}(\R)\hat\otimes H^1_0(\dom)\big)$.
Here $\Phi*E_0\,S$ denotes the element of the space $L_2\big(\Omega,\cF_T,\wP;H^{-s}(\R)\hat\otimes H^1_0(\dom)\big)$ that acts on test functions $(\phi,\varphi)\in H^s(\R)\times L_2(\dom)$ 
\linebreak$(\hookrightarrow H^s(\R)\times H^{-1}(\dom))$ via
\begin{equation}\label{defPhi*E_0S}
\begin{aligned}
\big(\Phi*E_0\,S\big)(\omega)(\phi,\varphi)
&:= \big(\Phi*E_0\,S\big)(\omega,\phi,\varphi)\\
&:=\Big\langle \Phi(\omega)*\int_\dom E_0(\cdot,x)S(x)\varphi(x)\dl x,\phi\Big\rangle_{H^{-s}(\R)\times H^s(\R)},\; \omega\in\Omega,
\end{aligned}
\end{equation}
where $S$ and $E_0$ are given by \eqref{defS} and \eqref{defE0}, $\int_\dom E_0(\cdot,x)S(x)\varphi(x)\dl x$ denotes the (locally integrable) function
$\R\ni t\mapsto\int_\dom E_0(t,x)S(x)\varphi(x)\dl x\in\C$, and $*$ is the usual convolution of Schwartz distributions.

We have
\begin{equation}\label{mainResuSnotin}
\big(\Phi*E_0\,S\big)(\omega)\notin \bigcup_{r\geq0}H^{-r}(\R)\hat\otimes H^{1+\alpha}(\dom)
\;\text{ on }\; \{\omega\in\Omega:\Phi(\omega)\not\equiv0\}
\end{equation}
and $\Phi$ is determined $\wP$-almost surely in terms of its Fourier transform w.r.t.\ the time variable $t\in\R$ as follows: For $\wP$-almost
every $\omega\in\Omega$,
\begin{equation}\label{mainResDefPhi}
\big[\mathcal F_{t\to\xi}\big(\Phi(\omega)\big)\big](\xi)=\Big\langle H(\omega,i\xi),
\overline{v(i\xi)}\Big\rangle_{L_2(\dom)} \text{ for $\lambda$-almost every }\xi\in\R,
\end{equation}
where $v$ and $H$ are defined by \eqref{v1} and \eqref{H(z)}.

Moreover, 
\begin{equation}\label{mainResEst}
\begin{aligned}
&\E\Big(\nnrm{u_{+,\mathrm R}}{H ^{-s}(\R)\hat\otimes H^2(\dom)}^2
+\nnrm{\Phi}{H^{(1-\alpha)/2-s}(\R)}^2\Big)\\
&\leq C\E\Big(\nnrm{u_0}{L_2(\dom)}^2+\nnrm{u(T)}{L_2(\dom)}^2+\int_0^T\gnnrm{F\big(u(t)\big)}{L_2(\dom)}^2\dl t+\sup_{t\in[0,T]}\gnnrm{G\big(\tilde u(t)\big)}{\hs(U_0;L_2(\dom))}^2\Big),
\end{aligned}
\end{equation}
where $C>0$ depends only on $s$, $T$, $\dom$ and the cut-off function $\eta$ in \eqref{defS}, and where $\tilde u=(\tilde u(t))_{t\in[0,T]}$ denotes the modification of $u=(u(t))_{t\in[0,T]}$ that is continuous in $L_2(\dom;\R)$.
\end{thm}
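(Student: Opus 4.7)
The plan is to follow Grisvard's Laplace-transform strategy adapted to the stochastic setting. The first step is to apply It\^o's formula to $t \mapsto e^{-zt}u(t)$ for $z \in \C$ in order to identify, in $L_2(\Omega;L_2(\dom))$, the random element $\hat u(z) := \int_0^T e^{-zt}u(t)\,\dl t$ as a solution of the elliptic equation
\[
(z\id{L_2(\dom)} - \Lap)\,\hat u(z) = H(z),
\]
where $H(z)$ is the random right-hand side defined in \eqref{H(z)}. Since $H$ collects the boundary terms as well as the drift and noise integrals, this is the stochastic analogue of Grisvard's $t$-Laplace transform of the heat equation. The holomorphy claim for $H$ (promised as Lemma~\ref{Gholom}) combined with a careful choice of versions then promotes the family $(\hat u(z))_{z \in \C \setminus \sigma(\Lap)}$ to an $L_2(\dom)$-valued random analytic family with the expected joint measurability in $(\omega,z)$.

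Second, for each fixed $\omega$ and each $z \notin \sigma(\Lap)$ I would apply Grisvard's decomposition for the Helmholtz equation on the polygonal domain $\dom$, as summarised in Subsection~\ref{Estimates for the Helmholtz equation}: write $\hat u(z) = \hat u_{\mathrm R}(z) + c(z)\,w_S(z)$, where $\hat u_{\mathrm R}(z) \in H^2(\dom)\cap H^1_0(\dom)$, the singularity coefficient is $c(z) = \langle H(z), \overline{v(z)}\rangle_{L_2(\dom)}$ with $v(z)$ the dual singular function in \eqref{v1} (the classical projection formula \cite[Proposition~2.5.6]{Gri92}), and $w_S(z)$ is an explicit function carrying the corner singularity at parameter~$z$. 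The Helmholtz estimates then yield quantitative control of $\hat u_{\mathrm R}(z)$ in $H^2(\dom)$ and of the $z$-dependent factors in $w_S(z)$, uniformly along the imaginary axis $z=i\xi$.

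Third, I would invert the Laplace transform along $z = i\xi$. Because the $t$-Laplace transform of $E_0(t,x)$ is $e^{-r\sqrt z}$, after extracting the angular/cut-off factors the inversion of the singular component produces a convolution of the form $\Phi * (E_0\,S)$, where $\Phi$ is characterised $\wP$-almost surely through its Fourier transform by formula~\eqref{mainResDefPhi}; inverting $\hat u_{\mathrm R}$ produces $u_{+,\mathrm R}$. The precise negative temporal Sobolev index $(1-\alpha)/2 - s$ is identified through a Paley-Wiener type argument (Subsection~\ref{A Paley-Wiener type theorem}) combined with Lemma~\ref{X(phi)Modification}, which quantifies how the temporal roughness of the Wiener noise is absorbed when tested against a fixed spatial functional. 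The non-inclusion~\eqref{mainResuSnotin} follows directly from $S \notin H^{1+\alpha}(\dom)$ (cf.\ \cite[Theorem~1.4.5.3]{Gri85}), and the bound~\eqref{mainResEst} is obtained by integrating the $L_2(\wP)$-norms of $H(i\xi)$ against the $\xi$-weights prescribed by the decay of $v(i\xi)$.

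The main obstacle is simultaneously handling three issues: the stochastic integral inside $H$ exists only in an $L_2(\wP)$-sense, so every manipulation of $H(\omega,i\xi)$ on the Fourier side has to be justified by choosing compatible versions that allow $\omega$-wise inversion; the growth of $v(i\xi)$ as $|\xi|\to\infty$, together with the growth of $H(i\xi)$ dictated by the noise, must be tracked sharply to produce exactly the exponent $(1-\alpha)/2 - s$ rather than a weaker one; and the resulting random objects, obtained $\omega$-wise after inversion, must be identified with honest elements of the tensor-product spaces $H^{-s}(\R)\hat\otimes H^2(\dom)$ and $H^{-s}(\R)\hat\otimes H^1_0(\dom)$ from Subsection~\ref{The solution process as a tensor product-valued random variable}. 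This last measurability bookkeeping, which is what makes the decomposition \eqref{mainRes1} an identity in the $L_2(\Omega;\cdot)$-sense rather than just a pathwise statement, is the most delicate step of the argument.
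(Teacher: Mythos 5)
Your proposal follows exactly the paper's own strategy: It\^o's formula turns Eq.~\eqref{SHE} into the random Helmholtz equation $-\Delta U(z)+zU(z)=H(z)$ (Lemma~\ref{StoGriEllEqnLem}), Grisvard's decomposition with the dual singular function $v(z)$ and the modified singular part $c(z)e^{-r\sqrt z}S$ is applied $\omega$-wise (Subsection~\ref{Decomposition of the transformed equation}), and the inversion along $z=i\xi$ via the Paley--Wiener theorem together with Lemma~\ref{X(phi)Modification} yields $u_{+,\mathrm R}$ and $\Phi*E_0\,S$ with the stated exponents (Subsection~\ref{Inverse transform}). The outline is correct and coincides with the paper's proof, including the identification of the measurability/versions issue as the delicate point.
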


The proof of Theorem \ref{mainRes} is given is Section \ref{Proof of the main result}. Some remarks concerning Theorem~\ref{mainRes} seem to be in order. 
\begin{rem}
{\bf (i)}
It is a common convention not to distinguish explicitly between functions and equivalence classes of functions. The existence of $\Phi\in L_2\big(\Omega,\cF_T,\wP; H^{(1-\alpha)/2-s}(\R)\big)$ stated in Theorem~\ref{mainRes} is meant as the existence of an $\cF_T/\cB(H^{(1-\alpha)/2-s}(\R))$-measurable, square integrable {\it function} $\Phi:\Omega\to H^{(1-\alpha)/2-s}(\R)$ such that, for all $\omega\in\Omega$, $\supp\Phi(\omega)\subset[0,\infty)$ and the mapping $(\Phi*E_0\,S)(\omega):H^{s}(\R)\times L_2(\dom)\to\C$ defined by \eqref{defPhi*E_0S} extends to a Hilbert-Schmidt mapping on $H^{s}(\R)\times H^{-1}(\dom)$, i.e., to an element of $H^{-s}(\R)\hat\otimes H^1_0(\dom)$. (In Subsection~\ref{Inverse transform of c e^{-r sqrt z}S} we will show implicity that the convolution $\Psi *\int_\dom E_0(\cdot,x)S(x)\varphi(x)\dl x$ belongs to $H^{-s}(\R)$ for all $\Psi\in H^{-s}(\R)$ and $\varphi\in L_2(\dom)$.) The resulting mapping $\Phi*E_0\,S:\Omega\to H^{-s}(\R)\hat\otimes H^1_0(\dom)$ is $\cF_T/\cB(H^{-s}(\R)\hat\otimes H^1_0(\dom))$-measurable and represents an element of the space $L_2\big(\Omega,\cF_T,\wP;H^{-s}(\R)\hat\otimes H^1_0(\dom)\big)$.

{\bf (ii)}
The assertion `$\Phi(\omega)\not\equiv0$' in \eqref{mainResuSnotin} means that there exists $\phi\in H^{-(1-\alpha)/2+s}(\R)$ such that $\langle\Phi(\omega),\phi\rangle_{H^{(1-\alpha)/2-s}(\R)\times H^{-(1-\alpha)/2+s}(\R)}\neq0$.

{\bf (iii)}
For $z\in\C\setminus(-\infty,0)$, an alternative representation of the functions $v(z)\in L_2(\dom)$, defined by \eqref{v1} and appearing in \eqref{mainResDefPhi}, is described in Remark~\ref{remHelmholtz} below. 

{\bf (iv)}
Note that the expectation on the right hand side of \eqref{mainResEst} is finite due to Theorem~\ref{thmKruLar} and the linear growth property of $F$ and $G$, which follows from the global Lipschitz property.
\end{rem}

The following corollary describes a corresponding
decomposition of $u$ within the space $L_2\big(\Omega,\cF_T,\wP;(H^s(0,T))'\hat\otimes H^1_0(\dom)\big)$. For the construction of a linear and bounded extension operator $\mathcal E:H^s(0,T)\to H^s(\R)$ we refer to \cite[Section~4.2]{Tri78}.

\begin{cor}\label{mainResCor}
Let the setting of Theorem \ref{mainRes} be given and consider the mild solution $u=(u(t))_{t\in[0,T]}$ to Eq.~\eqref{SHE} as an element of $L_2\big(\Omega,\cF_T,\wP;L_2(0,T)\hat\otimes H^1_0(\dom)\big)$ as described in Subsection~\ref{The solution process as a tensor product-valued random variable}. Let $\mathcal E:H^s(0,T)\to H^s(\R)$ be a linear and bounded extension operator. For $\omega\in\Omega$, $\phi\in H^s(0,T)$ and $\varphi\in H^ {-1}(\dom)$ define
\[
u_{\mathrm R}(\omega,\phi,\varphi):=u_{+,\mathrm R}(\omega,\mathcal E\phi,\varphi),\qquad
u_{\mathrm S}(\omega,\phi,\varphi):=\big(\Phi*E_0\,S\big)(\omega,\mathcal E\phi,\varphi),
\]
where $u_{+,\mathrm R}$ and $\Phi*E_0\,S$ are as in Theorem \ref{mainRes}.

Then, 
\begin{equation}\label{mainResCor1}
\begin{aligned}
u_{\mathrm R}\in L_2\big(\Omega,\cF_T,\wP;(&H^{s}(0,T))'\hat\otimes H^2(\dom)\big)\cap L_2\big(\Omega,\cF_T,\wP;(H^{s}(0,T))'\hat\otimes H^1_0(\dom)\big),\\
u_{\mathrm S}&\in L_2\big(\Omega,\cF_T,\wP;(H^{s}(0,T))'\hat\otimes H^1_0(\dom)\big),
\end{aligned}
\end{equation}
and the decomposition
\begin{equation}\label{mainResCor2}
u=u_{\mathrm R}+u_{\mathrm S}
\end{equation}
holds as an equality in $L_2\big(\Omega,\cF_T,\wP;(H^{s}(0,T))'\hat\otimes H^1_0(\dom)\big)$. 
For $\wP$-almost every $\omega\in\Omega$,
\begin{equation}\label{mainResCor3}
u_{\mathrm S}(\omega)\notin\bigcup_{r\geq0}(H^r(0,T))'\hat\otimes H^{1+\alpha}(\dom)
\quad\Longleftrightarrow\quad
u_{\mathrm S}(\omega)\not\equiv0
\quad\Longleftrightarrow\quad
\Phi(\omega)\not\equiv0.
\end{equation}

Moreover, there exists a constant $C>0$, depending only on $s$, $T$, $\dom$ and the cut-off function $\eta$ in \eqref{defS}, such that
\begin{equation}\label{mainResCorEst}
\begin{aligned}
\E\nnrm{u_{\mathrm R}}{(H ^s(0,T))'\hat\otimes H^2(\dom)}^2
\leq C\E\Big(&\nnrm{u_0}{L_2(\dom)}^2+\nnrm{u(T)}{L_2(\dom)}^2\\
&+\int_0^T\gnnrm{F\big(u(t)\big)}{L_2(\dom)}^2\dl t+\sup_{t\in[0,T]}\gnnrm{G\big(\tilde u(t)\big)}{\hs(U_0;L_2(\dom))}^2\Big),
\end{aligned}
\end{equation}
where $\tilde u=(\tilde u(t))_{t\in[0,T]}$ denotes the modification of $u=(u(t))_{t\in[0,T]}$ that is continuous in $L_2(\dom;\R)$.
\end{cor}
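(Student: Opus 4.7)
The plan is to derive Corollary~\ref{mainResCor} from Theorem~\ref{mainRes} by transporting everything through the Hilbert-Schmidt tensor product of $\mathcal E^*\colon H^{-s}(\R) \to (H^s(0,T))'$, the adjoint of the given extension operator, with the identity on the spatial factor. By Convention~\ref{conventionu} and the very definitions of $u_{\mathrm R}$ and $u_{\mathrm S}$ in the statement, one has
\[
u_{\mathrm R} = (\mathcal E^* \hat\otimes \mathrm{id})\,u_{+,\mathrm R}, \qquad u_{\mathrm S} = (\mathcal E^* \hat\otimes \mathrm{id})(\Phi*E_0\,S).
\]
Since $\mathcal E$ is bounded, so is $\mathcal E^*$, and hence $\mathcal E^*\hat\otimes\mathrm{id}$ is a bounded operator from $H^{-s}(\R)\hat\otimes\mathcal H$ to $(H^s(0,T))'\hat\otimes\mathcal H$ for every separable Hilbert space $\mathcal H$ (cf.\ Appendix~\ref{Tensor products of Sobolev spaces}). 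The memberships~\eqref{mainResCor1} and the estimate~\eqref{mainResCorEst} then follow at once from the corresponding assertions in Theorem~\ref{mainRes} and~\eqref{mainResEst}, with the operator norm of $\mathcal E$ absorbed into the constant.

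For the decomposition~\eqref{mainResCor2} I would apply $\mathcal E^*\hat\otimes\mathrm{id}$ to the identity $u_+ = u_{+,\mathrm R} + \Phi*E_0\,S$ of Theorem~\ref{mainRes}. The right-hand side turns into $u_{\mathrm R}+u_{\mathrm S}$ by definition; the key observation is that the left-hand side equals $u$. Exploiting $(\mathcal E\phi)|_{(0,T)}=\phi$, for $\phi\in H^s(0,T)$ and $\varphi\in H^{-1}(\dom)$ one computes
\[
\big((\mathcal E^*\hat\otimes\mathrm{id})u_+\big)(\omega,\phi,\varphi)
= u_+(\omega,\mathcal E\phi,\varphi)
= \int_0^T\langle u(\omega,t),\varphi\rangle_{H^1_0(\dom)\times H^{-1}(\dom)}\,\phi(t)\,\dl t
= u(\omega,\phi,\varphi),
\]
using that $u_+(\omega,t)=0$ for $t\notin[0,T]$.

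Finally, the equivalences~\eqref{mainResCor3} split into three implications. Two of them are routine: $0\in\bigcup_{r\geq0}(H^r(0,T))'\hat\otimes H^{1+\alpha}(\dom)$ gives the first direction, and $\Phi(\omega)\equiv0$ implies $\Phi(\omega)*E_0\,S=0$ as a distribution (hence $u_{\mathrm S}(\omega)\equiv0$), which yields the contrapositive of the second. The main obstacle is the remaining implication, that $\Phi(\omega)\not\equiv0$ forces $u_{\mathrm S}(\omega)\notin\bigcup_r(H^r(0,T))'\hat\otimes H^{1+\alpha}(\dom)$. I would proceed by contradiction: assuming $u_{\mathrm S}(\omega)\in(H^r(0,T))'\hat\otimes H^{1+\alpha}(\dom)$ for some $r\geq0$, the aim is to conclude $(\Phi*E_0\,S)(\omega)\in H^{-r'}(\R)\hat\otimes H^{1+\alpha}(\dom)$ for some $r'\geq 0$, contradicting~\eqref{mainResuSnotin}. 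From the identity $\Phi*E_0\,S=u_+-u_{+,\mathrm R}$ in $H^{-s}(\R)\hat\otimes H^1_0(\dom)$ one sees that $(\Phi*E_0\,S)(\omega)$ vanishes as a distribution in $t$ on $(-\infty,0)$ (by the support properties of $\Phi(\omega)$ and $E_0$) and coincides with $-u_{+,\mathrm R}(\omega)$ on $(T,\infty)$, where the latter takes values in $H^2(\dom)\hookrightarrow H^{1+\alpha}(\dom)$. A smooth partition of unity $\chi_-+\chi_0+\chi_+\equiv 1$ in $t$, subordinate to neighborhoods of $(-\infty,0)$, $(0,T)$ and $(T,\infty)$, then allows one to glue the three pieces: $\chi_-(\Phi*E_0\,S)(\omega)=0$, $\chi_+(\Phi*E_0\,S)(\omega)=-\chi_+\,u_{+,\mathrm R}(\omega)$ has $H^2$ spatial regularity, and $\chi_0(\Phi*E_0\,S)(\omega)$ is controlled via the zero-extension adjoint $R^*\colon(H^r(0,T))'\to H^{-r}(\R)$ applied to $u_{\mathrm S}(\omega)$. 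The chief technical difficulty is to ensure that this partition-of-unity argument behaves well at the boundary points $t=0$ and $t=T$; here continuity of $H^s(\R)$-functions for $s>1/2$ together with the $H^2(\dom)$-regularity of $u_{+,\mathrm R}$ near the junctions should be enough to absorb the potential boundary terms into a slight increase of $r'$.
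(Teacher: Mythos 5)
Your handling of \eqref{mainResCor1}, \eqref{mainResCor2} and \eqref{mainResCorEst} via precomposition with $\mathcal E$ (equivalently, the adjoint $\mathcal E^*$ tensored with the identity), including the verification that $(\mathcal E^*\hat\otimes\mathrm{id})u_+=u$ from $(\mathcal E\phi)|_{(0,T)}=\phi$, is correct and is exactly the paper's (very brief) argument; the two routine implications in \eqref{mainResCor3} are also fine. The gap is in the remaining implication $\Phi(\omega)\not\equiv0\Rightarrow u_{\mathrm S}(\omega)\notin\bigcup_{r\geq0}(H^r(0,T))'\hat\otimes H^{1+\alpha}(\dom)$. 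Your partition-of-unity reconstruction of $(\Phi*E_0\,S)(\omega)$ from $u_{\mathrm S}(\omega)$ does not go through as described: a distribution in $H^{-r}(\R)$ is not determined by its restrictions to the open sets $(-\infty,0)$, $(0,T)$ and $(T,\infty)$ (for $r>1/2$ both $\delta_0$ and $\delta_T$ lie in $H^{-r}(\R)$ and restrict to zero on all three), and any cut-off $\chi_0$ with $\chi_-+\chi_0+\chi_+\equiv1$ necessarily has support meeting $(-\epsilon,0]\cup[T,T+\epsilon)$, so that $\langle(\Phi*E_0\,S)(\omega),\chi_0\psi\rangle$ is not expressible through $u_{\mathrm S}(\omega)$ alone. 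You acknowledge the boundary points $t=0$ and $t=T$ as the ``chief technical difficulty'' and propose to absorb them into a larger $r'$, but no mechanism is supplied for controlling a possible singular part of $(\Phi*E_0\,S)(\omega)$ concentrated at these points; that is precisely the missing step.

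The paper closes this implication with a purely algebraic device that avoids all support analysis. Let $\cR\in\bo(H^s(\R);H^s(0,T))$ be the restriction $\cR\phi:=\phi|_{(0,T)}$ and set $u_{\mathrm R}^{\cR}(\omega,\phi,\varphi):=u_{\mathrm R}(\omega,\cR\phi,\varphi)$ and $u_{\mathrm S}^{\cR}(\omega,\phi,\varphi):=u_{\mathrm S}(\omega,\cR\phi,\varphi)$. Since $\cR|_{H^r(\R)}$ is bounded into $H^r(0,T)$ for every $r\geq s$, it suffices to rule out $u_{\mathrm S}^{\cR}(\omega)\in H^{-r}(\R)\hat\otimes H^{1+\alpha}(\dom)$. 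From the two decompositions $u_+=u_{+,\mathrm R}+u_{+,\mathrm S}=u_{\mathrm R}^{\cR}+u_{\mathrm S}^{\cR}$ one obtains the identity $u_{\mathrm S}^{\cR}(\omega)-u_{+,\mathrm S}(\omega)=u_{+,\mathrm R}(\omega)-u_{\mathrm R}^{\cR}(\omega)$, whose right-hand side lies in $H^{-s}(\R)\hat\otimes H^2(\dom)\hookrightarrow H^{-r}(\R)\hat\otimes H^{1+\alpha}(\dom)$; hence $u_{\mathrm S}^{\cR}(\omega)\in H^{-r}(\R)\hat\otimes H^{1+\alpha}(\dom)$ would force $u_{+,\mathrm S}(\omega)$ into the same space, contradicting \eqref{mainResuSnotin}. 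Replacing your gluing step by this identity repairs the proof; the rest of your argument can stand as written.
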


The assertion `$u_{\mathrm S}(\omega)\not\equiv0$' in \eqref{mainResCor3} means that there exist $\phi\in H^s(0,T)$ and $\varphi\in H^{-1}(\dom)$ with $u_{\mathrm S}(\omega,\phi,\varphi)\neq0$.

\begin{proof}[Proof of Corollary~\ref{mainResCor}]
Assertions~\eqref{mainResCor1}, \eqref{mainResCor2} and Estimate~\eqref{mainResCorEst} follow from the boundedness of $\mathcal E$ and the corresponding properties of $u_{+,\mathrm R}$ and $\Phi*E_0\,S$. 

Let us verify \eqref{mainResCor3}. The implications in direction ``$\Rightarrow$''  are obvious,
so it remains to prove that the implication
\begin{equation}\label{mainResCor4}
\Phi(\omega)\not\equiv0
\quad\Longrightarrow\quad 
u_{\mathrm S}(\omega)\not\in\bigcup_{r\geq0}(H^r(0,T))'\hat\otimes H^{1+\alpha}(\dom)
\end{equation}
holds for $\wP$-almost every $\omega\in\Omega$. 
Let $\cR\in\bo\big(H^s(\R);H^s(0,T)\big)$ be the restriction
of functions in $H^s(\R)$ to $(0,T)$, i.e., $\cR\phi=\phi|_{(0,T)}$ for $\phi\in H^s(\R)$. For $\omega\in\Omega$, $\phi\in H^s(\R)$
and $\varphi\in H^{-1}(\dom)$ we set
\[
u_{\mathrm R}^{\cR}(\omega,\phi,\varphi):=u_{\mathrm R}(\omega,\cR\phi,\varphi),\qquad
u_{\mathrm S}^{\cR}(\omega,\phi,\varphi):=u_{\mathrm S}(\omega,\cR\phi,\varphi).
\]
Due to \eqref{mainResCor1} and the boundedness of $\cR$ we have $u_{\mathrm R}^\cR\in L_2\big(\Omega;(H^{s}(0,T))'\hat\otimes H^2(\dom)\big)\cap L_2\big(\Omega;(H^{s}(0,T))'\hat\otimes H^1_0(\dom)\big)$ and $u_{\mathrm S}^\cR\in L_2\big(\Omega;(H^{s}(0,T))'\hat\otimes H^1_0(\dom)\big)$.
Since $\cR|_{H^r(\R)}$ belongs to $\bo\big(H^r(\R);H^r(0,T)\big)$ for all $r\geq s$, the implication
\[u_{\mathrm S}^\cR(\omega)\notin\bigcup_{r\geq0}H^{-r}(\R)\hat\otimes H^{1+\alpha}(\dom)\quad\Longrightarrow\quad 
u_{\mathrm S}(\omega)\notin\bigcup_{r\geq0}(H^r(0,T))'\hat\otimes H^{1+\alpha}(\dom)\]
holds, so that \eqref{mainResCor4} follows if we can show 
\begin{equation}\label{mainResCor5}
\Phi(\omega)\not\equiv0
\quad\Longrightarrow\quad 
u_{\mathrm S}^\cR(\omega)\notin\bigcup_{r\geq0}H^{-r}(\R)\hat\otimes H^{1+\alpha}(\dom)
\end{equation}
for $\wP$-almost all $\omega\in\Omega$.

Assume that \eqref{mainResCor5} was not true and let $A\in\cA$ with $\wP(A)>0$
such that, for all $\omega\in A$, $\Phi(\omega)\not\equiv 0$ and $u_{S}^{\mathcal R}(\omega)\in H^{-r}(\R)\hat\otimes H^{1+\pi/\gamma}(\dom)$ for some $r=r(\omega)\geq s$.
Then, since $u_{+,\mathrm S}(\omega)\notin H^{-r}(\R)\hat\otimes H^{1+\alpha}(\dom)$ by Theorem~\ref{mainRes}, we have
$u_{\mathrm S}^{\mathcal R}(\omega)-u_{+,\mathrm S}(\omega)\notin H^{-r}(\R)\hat\otimes H^{1+\alpha}(\dom)$ for all $\omega\in A$.
Therefore, the $\wP$-almost sure equality $u_+=u_{+,\mathrm R}+u_{+,\mathrm S}=u_{\mathrm R}^{\mathcal R}+u_{\mathrm S}^{\mathcal R}$ implies 
\[u_{+,\mathrm R}(\omega)-u_{\mathrm R}^{\mathcal R}(\omega)
=u_{\mathrm S}^{\mathcal R}(\omega)-u_{+,\mathrm S}(\omega)
\notin H^{-r}(\R)\hat\otimes H^{1+\alpha}(\dom)\]
for $\omega\in A\setminus N$, where $N\subset\cA$ has $\wP$-measure zero.
But this is a contradiction to the fact that
both $u_{+,\mathrm R}$ and $u_{\mathrm R}^{\mathcal R}$ take values in $H^{-s}(\R)\hat\otimes H^2(\dom)\hookrightarrow H^{-r}(\R)\hat\otimes H^{1+\alpha}(\dom)$.
\end{proof}

Next, we give two concrete examples for applications of Theorem~\ref{mainRes} and Corollary~\ref{mainResCor}, respectively, to equations of the type \eqref{SHE}. In the first example we have $u_{\textrm S}\not\equiv0$ with probability one.

\begin{ex}\label{ex1}
Let the Wiener process $W=(W(t))_{t\in[0,T]}$ be $L_2(\dom;\R)$-valued, i.e., $U=L_2(\dom;\R)$, and assume that the range of its covariance operator $Q\in\nuc\big(L_2(\dom;\R)\big)$ is dense in $L_2(\dom;\R)$. For  $u_0\in L_2\big(\Omega,\mathcal F_0,\wP;H^1_0(\dom;\R)\big)\cap L_p\big(\Omega,\mathcal F_0,\wP;L_2(\dom;\R)\big)$
with $p>2$ consider the equation
\[\dl u(t)=\Lap u(t)\dl t+\dl W(t),\quad u(0)=u_0,\;t\in[0,T],\] 
which fits into our abstract setting with $F(v):=0$ and $G(v):=\id{L_2(\dom;\R)}$ for all $v\in L_2(\dom;\R)$, cf.\ Example~\ref{exSHE} {\bf(i)}. (Let us remark that, since we are considering an equation with additive noise, the assumption  $u_0\in L_p\big(\Omega,\mathcal F_0,\wP;L_2(\dom;\R)\big)$ is not really needed here to obtain the results of Theorem~\ref{mainRes} and
Corollary \ref{mainResCor}; we do not go into details.) Fix $s>1/2$ and let
$u_{+,\mathrm R}$, $\Phi$, $u_{\mathrm R}$ and $u_{\mathrm S}$ be as in Theorem~\ref{mainRes} and
Corollary \ref{mainResCor}. Using It\^{o}'s isometry, one sees that in this setting the estimates~\eqref{mainResEst} and \eqref{mainResCorEst} simplify to
\begin{equation*}
\begin{aligned}
\E\Big(\nnrm{u_{+,\mathrm R}}{H ^{-s}(\R)\hat\otimes H^2(\dom)}^2
+\nnrm{u_{\mathrm R}}{(H ^s(0,T))'\hat\otimes H^2(\dom)}^2&+\nnrm{\Phi}{H^{(1-\alpha)/2-s}(\R)}^2\Big)\\
&\leq C\Big(\E\nnrm{u_0}{L_2(\dom)}^2+\nnrm{Q^{1/2}}{\hs(L_2(\dom;\R))}^2\Big)
\end{aligned}
\end{equation*}
with $C>0$ depending only on $s$, $T$ and $\dom$.

Let us show that $u_{+,\mathrm S}$ and $u_{\mathrm S}$ are non-zero $\wP$-almost-surely. To this end, we have to show that $\Phi$ defined
by \eqref{mainResDefPhi} is non-zero $\wP$-almost-surely. Recall that we have fixed a holomorphic modification of the
$L_2(\dom)$-valued random field $(H(z))_{z\in\C}$. The resolvent map
$\C\setminus\sigma\big(\Lap\big)\ni u\mapsto\big(z\id{L_2(\dom)}-\Lap\big)^{-1}\in \bo(L_2(\dom))$ is holomorphic too, so that the function $\R\ni\xi\mapsto\big\langle H(\omega,i\xi),
\overline{v(i\xi)}\big\rangle_{L_2(\dom)}\in\C$ is infinitely smooth
for all $\omega\in\Omega$. 
Thus, the Fourier transform $\R\ni\xi\mapsto\big[\mathcal F_{t\to\xi}\Phi(\omega)\big](\xi)\in\C$ has an infinitely smooth, hence continuous version for $\wP$-almost all $\omega\in\Omega$; we consider these continuous versions from now on.
Consequently, it is enough to show that
$\big[\mathcal F_{t\to\xi}\Phi(\omega)\big](0)=\big\langle H(\omega,0),
\overline{v(0)}\big\rangle_{L_2(\dom)}=\langle H(\omega,0),
v_0\rangle_{L_2(\dom;\R)}$ is non-zero for
almost all $\omega\in\Omega$.

Observe that
\begin{equation}\label{eqEx1}
\begin{aligned}
\langle H(0),v_0\rangle_{L_2(\dom;\R)}\;=\;&
\Big\langle\int_0^T\big(\id{L_2(\dom;\R)}-e^{(T-t)\Lap}\big)\dl W(t),v_0\Big\rangle_{L_2(\dom;\R)}\\
&+\left\langle u_0-e^{T\Lap}u_0,v_0\right\rangle_{L_2(\dom;\R)}.
\end{aligned} 
\end{equation}
Setting $\Psi(t) w:=\big\langle\big(\id{L_2(\dom;\R)}-e^{(T-t)\Lap}\big)w,v_0\big\rangle_{L_2(\dom;\R)}$ for $w\in L_2(\dom;\R)$, we obtain
\begin{align*}
\E\Big(\Big\langle\int_0^T\big(\id{L_2(\dom;\R)}-e^{(T-t)\Lap}\big)&\dl W(t),v_0\Big\rangle_{L_2(\dom;\R)}^2\Big)\\
&=\E\Big(\Big|\int_0^T\Psi(t)\dl W(t)\Big|^2\Big)\\
&=\int_0^T\gnnrm{\Psi(t)Q^{1/2}}{\hs(L_2(\dom;\R);\R)}^2\dl t\\
&=\int_0^T\gnnrm{Q^{1/2}\big(\id{L_2(\dom;\R)}-e^{(T-t)\Lap}\big)v_0}{L_2(\dom;\R)}^2\dl t
\end{align*}
due to It\^{o}'s isometry, Parseval's identity and the symmetry of $Q^{1/2}$. Clearly, the kernel of $\id{L_2(\dom;\R)}-e^{(T-t)\Lap}$ is zero for all $t<T$.
Since the kernel of $Q^{1/2}=(Q^{1/2})^*$ is zero too (due to our assumption that the range of $Q$ is dense in $L_2(\dom;\R)$) and since
$v_0\neq0$, the last integral is strictly positive. This means that the Gaussian random variable
$\big\langle\int_0^T\big(\id{L_2(\dom;\R)}-e^{(T-t)\Lap}\big)\dl W(t),v_0\big\rangle_{L_2(\dom;\R)}$ is not degenerate and thus its probability distribution has a density w.r.t.\ Lebesgue measure. Note that $\big\langle u_0-e^{T\Lap}u_0,v_0\big\rangle_{L_2(\dom;\R)}$ in \eqref{eqEx1} is $\cF_0$-measurable and thus independent of $\int_0^T\big(\id{L_2(\dom;\R)}-e^{(T-t)\Lap}\big)\dl W(t)$. It follows that the probability distribution of $\langle H(0),v_0\rangle_{L_2(\dom;\R)}$ has a density w.r.t.\ Lebesgue measure, hence $\langle H(0),v_0\rangle_{L_2(\dom;\R)}$ is non-zero $\wP$-almost surely.
\end{ex}

We end this section with a toy example which shows that it may happen that $u_S\not\equiv0$ with probability greater than zero and less than one.

\begin{ex}\label{ex2}
Let the Wiener process $W=(W(t))_{t\in[0,T]}$ be one-dimensional, i.e., $U=U_0=\R$, and consider the domain $\dom=(-1,1)^2\setminus(-1,0]^2\subset\R^2$. Define $u_0\in H^2(\dom)\cap H^1_0(\dom)$ by
\[u_0(x):=\sin(\pi x_1)\sin(\pi x_2),\quad x=(x_1,x_2)\in\dom,\]
so that $-\Delta u_0=2\pi^2u_0$. Let $v_0=v(0)=(1/\pi)(\psi_0-\varphi_0)$ be as in \eqref{v1} and define $G:L_2(\dom;\R)\to L_2(\dom;\R)\cong\bo(\R;L_2(\dom;\R))=\hs(\R;L_2(\dom;\R))$ by
\[G(v):= u_0+g\big(\nnrm{v}{L_2(\dom;\R)}\big)v_0,\quad v\in L_2(\dom;\R),\]
where $g:\R\to\R$ is Lipschitz-continuous with $\supp g=[c,\infty)$ for some $c>0$. Then the equation 
\begin{equation*}
\dl u(t)=\Lap u(t)\dl t+ G(u(t))\dl W(t),\quad u(0)=u_0,\; t\in[0,T],
\end{equation*}
fits into the setting of Theorem~\ref{mainRes} and Corollary~\ref{mainResCor}, and we have $0<\wP(u_{\mathrm S}\neq0)<1$.

To prove the latter assertion, we fix a continuous version of the Fourier transform $\R\ni\xi\mapsto\big[\mathcal F_{t\to\xi}\Phi(\omega)\big](\xi)\in\C$ for $\wP$-almost all $\omega\in\Omega$ as in Example~\ref{ex1}. Then 
\[\wP\big(\langle H(0),
v_0\rangle_{L_2(\dom;\R)}\neq0\big)=\wP\big(\big[\mathcal F_{t\to\xi}\Phi\big](0)\neq0\big)\leq \wP(\Phi\not\equiv0)=\wP(u_{\mathrm S}\not\equiv0)\]
and
\begin{align*}
\langle H(0),v_0\rangle_{L_2(\dom;\R)}
&=\int_0^T\left[\left\langle\big(1-e^{-(T-t)2\pi^2}\big)u_0,v_0\right\rangle_{L_2(\dom;\R)}\right.\\
&\qquad\quad\left. +g\big(\nnrm{u(t)}{L_2(\dom;\R)}\big)\left\langle\big(\id{L_2(\dom;\R)}-e^{(T-t)\Lap}\big)v_0,v_0\right\rangle_{L_2(\dom;\R)}\right]\dl W(t)\\
&\quad +\left\langle\big(1-e^{-T2\pi^2}\big)u_0,v_0\right\rangle_{L_2(\dom;\R)}\\
&=\int_0^Tg\big(\nnrm{u(t)}{L_2(\dom;\R)}\big)\sgnnrm{\big(\id{L_2(\dom;\R)}-e^{(T-t)\Lap}\big)^{1/2}v_0}{L_2(\dom;\R)}^2\dl W(t). 
\end{align*}
The last step is due to the fact that $\langle w,v_0\rangle_{L_2(\dom)}=0$ for all $w\in \Delta\left(H^2(\dom)\cap H^1_0(\dom)\right)$, a consequence of Green's formula; see \cite[Theorem 2.3.3 and Lemma 2.5.4]{Gri92}.
By It\^{o}'s isometry,
\[\E\left(\langle H(0),v_0\rangle_{L_2(\dom;\R)}^2\right)=\E\int_0^Tg\big(\nnrm{u(t)}{L_2(\dom;\R)}\big)^2\sgnnrm{\big(\id{L_2(\dom;\R)}-e^{(T-t)\Lap}\big)^{1/2}v_0}{L_2(\dom;\R)}^4\dl t\]
and therefore we know that $\wP(u_{\mathrm S}\not\equiv0)>0$ if 
\begin{equation}\label{ex2eq1}
\wP\otimes\dl t\left(\Big\{(\omega,t)\in\Omega\times[0,T]:g\big(\nnrm{u(\omega,t)}{L_2(\dom;\R)}\big)>0\Big\}\right)>0.
\end{equation}
Note that $g\big(\nnrm{u(\omega,t)}{L_2(\dom;\R)}\big)>0$ if, and only if, $\nnrm{u(\omega,t)}{L_2(\dom;\R)}>c$. Due to the orthogonality of $u_0$ and $v_0$,
\[
\nnrm{u(t)}{L_2(\dom;\R)}^2=\nnrm{u_0}{L_2(\dom;\R)}^2(X(t))^2\\
+\sgnnrm{\int_0^tg\big(\nnrm{u(s)}{L_2(\dom;\R)}\big)e^{(t-s)\Lap}v_0\,\dl W(s)}{L_2(\dom;\R)}^2,
\]
where we have set $X(t):=e^{-t2\pi^2}+\int_0^te^{-(t-s)2\pi^2}\dl W(s)$.
Thus,
\begin{align*}
\wP\otimes\dl t\Big(\Big\{(\omega,t)\in\;&\Omega\times[0,T]:\nnrm{u(\omega,t)}{L_2(\dom;\R)}>c\Big\}\Big)\\
&\geq \wP\otimes\dl t\Bigg(\Bigg\{(\omega,t)\in\Omega\times[0,T]:|X(\omega,t)|>\frac c{\nnrm{u_0}{L_2(\dom;\R)}}\Bigg\}\Bigg)>0
\end{align*}
by standard properties of the one-dimensional Ornstein-Uhlenbeck process $X=(X(t))_{t\in[0,T]}$. This proves \eqref{ex2eq1}, hence $\wP(u_{\mathrm S}\not\equiv0)>0$. To see that $\wP(u_{\mathrm S}\not\equiv0)<1$, we fix a continuous modification of $X$ and estimate
\begin{align*}
\wP(u_{\mathrm S}\equiv0)&\geq \wP\Big(u\in L_2(0,T;H^2(\dom))\cong L_2(0,T)\hat\otimes H^2(\dom)\Big)\\
&\geq  \wP\Bigg(\sup_{t\in[0,T]}|X(t)|<\frac c{\nnrm{u_0}{L_2(\dom;\R)}}\Bigg)>0.
\end{align*}
The penultimate estimate holds because $\tilde u=(\tilde u(t))_{t\in[0,T]}$ defined by 
\[\tilde u(t):=\one_{\big\{\sup_{s\leq t}|X(s)|<\frac c{\nnrm{u_0}{L_2(\dom;\R)}}\big\}}X(t)u_0
+\one_{\big\{\sup_{s\leq t}|X(s)|\geq\frac c{\nnrm{u_0}{L_2(\dom;\R)}}\big\}}u(t)\]
is a predictable modification of $u$ (due to the uniqueness of the solution to Eq.~\eqref{SHE}) so that the equality $u(\omega,\cdot)=\tilde u(\omega,\cdot)$ holds in $L_2(0,T;H^1_0(\dom))$ for $\wP$-almost all $\omega\in\Omega$.
\end{ex}

\section{Auxiliary results}
\label{Auxiliary results}

\subsection{A Paley-Wiener type theorem}
\label{A Paley-Wiener type theorem}

As mentioned in the introduction, Theorem~\ref{mainRes} will be proved with the help of a Laplace transform argument. If we used only the Fourier transform instead, we would run into technical troubles when carrying out the inverse transform leading to the singular part \linebreak$\Phi*E_0S$. Moreover, we would not be able to show that the support of $\Phi(\omega)\in$\linebreak$ H^{-s+1/2(1-\alpha)}(\R)$,  defined for $\wP$-almost every $\omega\in\Omega$ by \eqref{mainResDefPhi}, is contained in $[0,\infty)$. The latter is a consequence of a Paley-Wiener type result, which we present in this subsection. Its proof is similar to the ones of Theorems~8.2-3 and 8.4-1 in \cite{Zem87}. However, since our assertion is slightly different, we present it here for the sake of completetness.

The Laplace transform of a tempered distribution $\phi\in\mathcal S'(\R)$ with $\supp \phi\subseteq[0,\infty)$ can be defined, at least for all $z\in (0,\infty)+i\R$, by setting 
\begin{equation}\label{defLT}
(\mathcal L\phi)(z):=\left\langle\lambda\, e^{-z(\cdot)},\phi\right\rangle_{\cS(\R)\times\cS'(\R)},
\end{equation}
cf.~\cite[Chapter~8]{Zem87}.
Here, $\lambda$ is a $C^\infty(\R)$-function with support bounded on the left, which equals one in a neighborhood of $[0,\infty)$, and $e^{-z(\cdot)}$ denotes the function $\R\ni t\mapsto e^{-zt}\in\C$.  The right hand side in \eqref{defLT} makes sense since $\lambda\,e^{-z(\cdot)}\in\mathcal S(\R)$ and the definition obviously does not depend on the specific choice of the function $\lambda$. If $\phi$ is a regular distribution, then $(\mathcal L\phi)(z)=\int_0^\infty e^{-zt}\phi(t)\,dt$. Remember that we use the normalization $(\cF \phi)(\xi)=\int_\R e^{-i\xi t}\phi(t)\,dt,\;\phi\in L_1(\R),\;\xi\in\R$ for the Fourier transform with its usual generalization to $\mathcal S'(\R)$.

%
%

\begin{thm}\label{PaleyWiener}
Let  $f:[0,\infty)\times i\R\to\C$ be continuous on $[0,\infty)\times i\R$ and holomorphic on $(0,\infty)\times i\R$. Assume that there exists a polynomial $P$ such that
\[|f(z)|\leq P(|z|),\quad z\in[0,\infty)+i\R.\]
Then,  the inverse Fourier transform 
\begin{equation*}
\phi:=\cF^{-1}\big(f(i\,\cdot\,)\big)\in \cS'(\R)
\end{equation*}
of the boundary function $f(i\,\cdot\,):\R\to\C,\;\xi\mapsto f(i\xi)$
satisfies 
\[\supp\phi\subset[0,\infty).\] 
Moreover, $(\mathcal L\phi)(z)=f(z)$ for all $z\in(0,\infty)+i\R$.
\end{thm}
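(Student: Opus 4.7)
The plan is to base both assertions on a Cauchy-type contour-shift argument that exploits the holomorphy of $f$ in the open right half-plane together with its polynomial bound. Since $\xi\mapsto f(i\xi)$ is continuous with polynomial growth, it lies in $\mathcal{S}'(\R)$, so $\phi:=\mathcal{F}^{-1}(f(i\,\cdot\,))\in\mathcal{S}'(\R)$ is well defined.

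For the support statement I would fix $\varepsilon>0$ and a test function $\psi\in C_c^\infty(\R)$ with $\supp\psi\subset(-\infty,-\varepsilon]$ and show that $\langle\phi,\psi\rangle=0$. Unpacking the definitions, this reduces to studying an integral of the form $\int_\R f(i\xi)\,G(\xi)\,d\xi$, where $G(\xi):=\int_\R e^{i\xi t}\psi(t)\,dt$ extends to an entire function of $\xi$. Because of the support condition on $\psi$, integration by parts yields a Paley--Wiener-type bound
\[
|G(\xi-i\sigma)|\le C_N\,(1+|\xi|)^{-N}\,e^{-\sigma\varepsilon},\qquad \xi\in\R,\ \sigma\ge 0,\ N\in\N,
\]
where the exponential factor comes from $\supp\psi\subset(-\infty,-\varepsilon]$. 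Cauchy's theorem applied on the rectangle with corners $\pm R$ and $\pm R-i\sigma$ then yields, after letting $R\to\infty$ and using the polynomial bound on $f$ to kill the two vertical sides,
\[
\int_\R f(i\xi)\,G(\xi)\,d\xi=\int_\R f(\sigma+i\xi)\,G(\xi-i\sigma)\,d\xi.
\]
The right-hand side is bounded by $C(1+\sigma)^{\deg P}e^{-\sigma\varepsilon}$ and tends to $0$ as $\sigma\to\infty$, whence $\langle\phi,\psi\rangle=0$. Since $\varepsilon>0$ was arbitrary, $\supp\phi\subset[0,\infty)$.

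For the Laplace-transform identity I would use the support information just obtained. The distribution $\mathcal{L}\phi$ defined by \eqref{defLT} is holomorphic on $(0,\infty)+i\R$ with polynomial growth on every closed vertical substrip, a standard consequence of $\phi$ being tempered with support bounded below. A short Fubini computation, justified by the fact that $\lambda\,e^{-(\sigma+i\,\cdot\,)(\cdot)}\in\mathcal{S}(\R)$ for each $\sigma>0$, shows that $\xi\mapsto(\mathcal{L}\phi)(\sigma+i\xi)$ is the Fourier transform of the tempered distribution $(\lambda e^{-\sigma\,\cdot})\phi$. Letting $\sigma\to 0^+$ in $\mathcal{S}'(\R)$ identifies the boundary trace of $\mathcal{L}\phi$ with $\mathcal{F}\phi=f(i\,\cdot\,)$; a dominated-convergence argument using the continuity of $f$ on $[0,\infty)+i\R$ shows that $f(\sigma+i\,\cdot\,)\to f(i\,\cdot\,)$ in $\mathcal{S}'(\R)$ as well. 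The difference $g:=\mathcal{L}\phi-f$ is thus holomorphic in the right half-plane, polynomially bounded on closed strips, and has vanishing boundary trace. A standard Phragm\'en--Lindel\"of argument applied to $g(z)/(z+1)^M$ with $M$ larger than the polynomial order then forces $g\equiv 0$, which is the second assertion.

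The main technical obstacle I anticipate is the uniform bookkeeping inside the contour-shift step: one must balance the polynomial growth of $f$ on the vertical sides of the Cauchy rectangle against the Paley--Wiener decay of $G$ coming from the compact support of $\psi$, with control that is uniform in both the height $\sigma$ and the width $R$. Once those estimates are in place, the remaining ingredients (temperedness of $\phi$, holomorphy of the Laplace transform, and the Phragm\'en--Lindel\"of step) are routine.
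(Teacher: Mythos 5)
Your argument for the support statement is correct and is essentially the paper's own argument in dualized form: the paper first normalizes $f$ to $\tilde f(z)=f(z)/(1+z)^N$ so that the boundary function is integrable, writes $\tilde\phi$ as an absolutely convergent inverse Fourier integral, and pushes the contour to the right to show $\tilde\phi(t)=0$ pointwise for $t<0$; you instead pair $\phi$ against a test function supported in $(-\infty,-\varepsilon]$ and shift the contour, trading the pointwise statement for the Paley--Wiener decay of $\widehat\psi$. Both versions rest on the same balance between the polynomial growth of $f$ and rapid decay in $\xi$, and yours is fine modulo the routine remark that $f$ is holomorphic only on the \emph{open} half-plane, so Cauchy's theorem must first be applied on rectangles shifted by $\varepsilon>0$ into the interior (as the paper does with $\varepsilon+\Gamma_{c,b}$) or invoked in its ``holomorphic inside, continuous on the closure'' form.

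The second assertion is where there is a genuine gap. You propose to show that $g=\mathcal L\phi-f$ is holomorphic with vanishing boundary trace and to conclude by Phragm\'en--Lindel\"of applied to $g(z)/(z+1)^M$. But Phragm\'en--Lindel\"of requires pointwise control of $g$ up to and on the boundary, whereas you only obtain the boundary trace in $\cS'(\R)$; moreover, the polynomial bound for $\mathcal L\phi$ on closed substrips $a\le\Re z\le b$ degenerates as $a\to0^+$ (the relevant seminorms of $\lambda\,e^{-z(\cdot)}$ blow up like powers of $1/\Re z$), so there is no uniform polynomial bound down to the imaginary axis. What your step really needs is the uniqueness theorem for distributional boundary values of holomorphic functions of tempered growth in a half-plane; this is true, but it is not ``a standard Phragm\'en--Lindel\"of argument'' and would have to be proved or cited. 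The gap is easily repaired with the tools you already set up: run the same contour shift for an arbitrary $\psi\in C_0^\infty(\R)$ with $\supp\psi\subset[-A,A]$, for which $|G(\xi-i\sigma)|\le C_N(1+|\xi|)^{-N}e^{\sigma A}$ still holds; the identity $\int_\R f(i\xi)G(\xi)\,\dl\xi=\int_\R f(\sigma+i\xi)G(\xi-i\sigma)\,\dl\xi$ then says precisely that $\langle\phi,\psi\rangle=\langle\cF^{-1}(f(\sigma+i\,\cdot\,)),e^{\sigma(\cdot)}\psi\rangle$ for each fixed $\sigma>0$, i.e.\ $\cF\big(e^{-\sigma(\cdot)}\phi\big)=f(\sigma+i\,\cdot\,)$, which is $(\mathcal L\phi)(\sigma+i\,\cdot\,)=f(\sigma+i\,\cdot\,)$ by Lemma~\ref{ZemThm}(i). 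The paper obtains the same identity for free from its normalization, since $e^{-ct}\tilde\phi(t)$ is given by an explicit absolutely convergent integral whose Fourier transform can be computed directly.
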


The following well-known facts  concerning the Laplace transform will be used in the proof of Theorem~\ref{PaleyWiener} (statements (i), (ii)) and in Subsection~\ref{Inverse transform} (statement (iii)); for proofs see \cite[Sections 5.4, 8.3, 8.5]{Zem87}.
\begin{lem}\label{ZemThm}
Let $\phi,\,\eta\in\mathcal S'(\R)$ such that $\supp\phi,\,\supp\eta\subset [0,\infty)$. The following statements hold:
\begin{itemize}
\item[(i)] $(\mathcal L\phi)(c+i\,\cdot\,)=\mathcal F\big(e^{-c(\cdot)}\phi\big),\quad c>0,$
\item[(ii)] $\big[\cL\big(\frac{d^n}{dt^n}\phi\big)\big](z)=z^n(\mathcal L\phi)(z),\quad z\in (0,\infty)+i\R,\;n\in\Nn.$
\item[(iii)] The convolution $\phi*\eta$ is also an element of $\cS'(\R)$, $\supp(\phi*\eta)\subset[0,\infty)$, and \[\big[\cL(\phi*\eta)\big](z)=(\cL\phi)(z)\cdot(\cL\eta)(z),\quad z\in(0,\infty)+i\R.\]
\end{itemize}
\end{lem}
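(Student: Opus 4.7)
The plan is to handle (i)--(iii) in order, noting that each definition of $(\cL\phi)(z)$ is a pairing of $\phi\in\cS'(\R)$ against the Schwartz function $\lambda\,e^{-z(\cdot)}$, so every step reduces to distributional calculus with well-chosen cutoffs. For (i), I would first observe that for $c>0$ the function $\lambda\,e^{-c(\cdot)}$ belongs to $\cS(\R)$: $\lambda$ is smooth with support bounded on the left, $e^{-c(\cdot)}$ and all its derivatives decay exponentially at $+\infty$, and on the bounded transition region everything is smooth and bounded. Hence $e^{-c(\cdot)}\phi:=(\lambda\,e^{-c(\cdot)})\phi$ is a well-defined element of $\cS'(\R)$, independent of the particular cutoff because two admissible choices differ on a set disjoint from $\supp\phi\subset[0,\infty)$. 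Testing $\cF(e^{-c(\cdot)}\phi)$ against $\psi\in\cS(\R)$ produces $\langle(\cF\psi)\,\lambda\,e^{-c(\cdot)},\phi\rangle$, while expanding $\int_\R\psi(\xi)(\cL\phi)(c+i\xi)\,d\xi=\int_\R\psi(\xi)\langle\lambda\,e^{-(c+i\xi)(\cdot)},\phi\rangle\,d\xi$ and interchanging the integral with the pairing yields the same expression. The Fubini step is justified by viewing $\xi\mapsto\psi(\xi)\lambda\,e^{-(c+i\xi)(\cdot)}$ as a continuous $\cS(\R)$-valued map whose seminorms are dominated by $|\psi(\xi)|(1+|\xi|)^l$ and hence integrable, so the Bochner integral exists in the Fréchet space $\cS(\R)$ and commutes with the continuous linear functional $\phi$.

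For (ii), I would pick the cutoff $\lambda$ so that $\lambda\equiv 1$ on an open neighborhood of $[0,\infty)$, which forces $\lambda'$ to have compact support disjoint from $\supp\phi$. The product rule gives $\frac{d}{dt}(\lambda\,e^{-z(\cdot)})=\lambda'e^{-z(\cdot)}-z\lambda\,e^{-z(\cdot)}$, so the definition of the distributional derivative yields
\[
\bigl(\cL\phi'\bigr)(z)=\langle\lambda\,e^{-z(\cdot)},\phi'\rangle=-\langle\lambda'e^{-z(\cdot)},\phi\rangle+z\langle\lambda\,e^{-z(\cdot)},\phi\rangle=z\,(\cL\phi)(z),
\]
where the first pairing vanishes by disjointness of supports. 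Iterating $n$ times delivers $\cL(\phi^{(n)})(z)=z^n(\cL\phi)(z)$.

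For (iii), the convolution $\phi*\eta$ is defined in the standard way for tempered distributions with one-sided support, temperedness is preserved, and the inclusion $\supp(\phi*\eta)\subset[0,\infty)+[0,\infty)=[0,\infty)$ is immediate. To obtain the product formula I would fix $c>0$ and invoke part (i). The key intermediate identity
\[
e^{-c(\cdot)}(\phi*\eta)=\bigl(e^{-c(\cdot)}\phi\bigr)*\bigl(e^{-c(\cdot)}\eta\bigr)
\]
follows from $e^{-c(t+s)}=e^{-cs}e^{-c(t-s)}$ together with the one-sided support constraint, verified by pairing against a test function and reducing to the classical identity on compactly supported approximants. Both factors on the right are exponentially decaying tempered distributions, so the classical convolution theorem gives $\cF\bigl((e^{-c(\cdot)}\phi)*(e^{-c(\cdot)}\eta)\bigr)=\cF(e^{-c(\cdot)}\phi)\cdot\cF(e^{-c(\cdot)}\eta)$. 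Applying (i) on both sides then produces $\cL(\phi*\eta)(c+i\xi)=(\cL\phi)(c+i\xi)\,(\cL\eta)(c+i\xi)$, and since $c>0$ is arbitrary, the identity extends to all of $(0,\infty)+i\R$. The main technical obstacle throughout is the rigorous handling of the distributional product $e^{-c(\cdot)}\phi$ and the distributional convolution $\phi*\eta$—in particular the Fubini interchange of (i) and the exponential-convolution identity in (iii); once those are in place, the rest is direct computation.
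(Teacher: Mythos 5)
The paper offers no proof of this lemma at all: it is quoted as a collection of well-known facts, with the proofs deferred to \cite[Sections 5.4, 8.3, 8.5]{Zem87}, so your proposal can only be judged on its own merits as a self-contained argument. Parts (i) and (ii) are correct and complete as you present them. In (i), the interpretation $e^{-c(\cdot)}\phi:=(\lambda e^{-c(\cdot)})\phi$ is exactly the one the paper adopts in the remark following the lemma, your cutoff-independence argument is right, and the interchange of the $\xi$-integral with the pairing is properly justified: the map $\xi\mapsto\psi(\xi)\lambda e^{-(c+i\xi)(\cdot)}$ is continuous into $\cS(\R)$ with seminorms dominated by $|\psi(\xi)|(1+|\xi|)^m$, so the $\cS(\R)$-valued Bochner integral exists and commutes with the continuous functional $\phi$. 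In (ii), the key point that $\supp\lambda'$ is compact and avoids an open neighborhood of $[0,\infty)\supset\supp\phi$ makes the pairing $\langle\lambda'e^{-z(\cdot)},\phi\rangle$ vanish, and the iteration is immediate.

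Part (iii) contains one genuine soft spot plus a slip. The slip: the displayed identity $e^{-c(t+s)}=e^{-cs}e^{-c(t-s)}$ is false as written; what your argument needs is the character property $e^{-c(t+s)}=e^{-ct}e^{-cs}$ in the tensor-product formulation $\langle\chi,\phi*\eta\rangle=\langle\chi(t+s),\phi\otimes\eta\rangle$ (equivalently $e^{-ct}=e^{-cs}e^{-c(t-s)}$ in the convolution-kernel picture) --- clearly a typo, but worth fixing. The soft spot: ``temperedness is preserved'' is not background knowledge here, it is precisely the first assertion of (iii), and it cannot be recovered a posteriori from your damped identity, because $e^{-c(\cdot)}u\in\cS'(\R)$ for every $c>0$ does not imply $u\in\cS'(\R)$ (consider $u(t)=\one_{(0,\infty)}(t)e^{\sqrt t}$, which is not of polynomial growth although all its exponential dampings are tempered). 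The standard repair is the structure theorem: write $\phi=\frac{d^k}{dt^k}F$ and $\eta=\frac{d^m}{dt^m}G$ with $F,G$ continuous, of polynomial growth and with supports bounded on the left; then $\phi*\eta=\frac{d^{k+m}}{dt^{k+m}}(F*G)$, and $F*G$ is continuous of polynomial growth because the convolution integral runs over an interval of length $O(1+|t|)$, which gives $\phi*\eta\in\cS'(\R)$ together with $\supp(\phi*\eta)\subset\overline{\supp\phi+\supp\eta}\subset[0,\infty)$. Relatedly, the ``classical convolution theorem'' step should invoke that $e^{-c(\cdot)}\phi$ belongs to the space $\mathcal O'_C$ of rapidly decreasing distributions (a convolutor of $\cS'(\R)$), for which the exchange formula $\cF(u*v)=\cF u\cdot\cF v$ is valid; ``exponentially decaying tempered distribution'' is not by itself the hypothesis of any standard theorem. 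With these two repairs the argument closes, and the pointwise (rather than almost-everywhere) identity on all of $(0,\infty)+i\R$ follows because both sides are continuous, indeed holomorphic, in $z$.
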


In the first statement of Theorem \ref{ZemThm}, the expressions $(\mathcal L\phi)(c+i\,\cdot\,)$ and $e^{-c(\cdot)}$ denote the functions $\R\ni\xi\mapsto (\mathcal L\phi)(c+i\xi)\in\C$ and $\R\ni t\mapsto e^{-ct}\in\R$, respectively.
The term $e^{-c(\cdot)}\phi$ is understood as the product of a $C^\infty(\R)$-function and a distribution in $\mathcal D'(\R)$. Using that $\phi$ belongs to $\mathcal S'(\R)$ and that the support of $\phi$ is bounded on the left, it is easy to show that $e^{-c(\cdot)}\phi$ is continuous w.r.t.\ the topology on $\mathcal S(\R)$, i.e.\ $e^{-c(\cdot)}\phi\in\mathcal S'(\R)$. One has $\langle\eta, e^{-c(\cdot)}\phi\rangle_{\mathcal S(\R)\times\mathcal S'(\R)}=\langle\lambda\, e^{-c(\cdot)}\eta,\phi\rangle_{\mathcal S(\R)\times\mathcal S'(\R)}$ for $\eta\in\mathcal S(\R)$, where $\lambda\in C^{\infty}(\R)$ has support bounded on the left and equals one in a neighborhood of $[0,\infty)$.

\begin{proof}[Proof of Theorem \ref{PaleyWiener}]
Due to the polynomial boundedness of $f$, the boundary function $f(i\,\cdot\,)$ belongs to $\cS'(\R)$ and $\phi=\cF^{-1}(f(i\,\cdot\,))$ defines an element in $\cS'(\R)$. Let $N\in\N$ be such that $\tilde f:[0,\infty)+i\R\to\C$ defined by
\[\tilde f(z):=\frac{f(z)}{(1+z)^N},\quad z\in[0,\infty)\times i\R\]
satisfies
\begin{equation}\label{PaleyWienerEq1}
\big|\tilde f(z)\big|\leq\frac C{1+|z|^{2}},\quad z\in[0,\infty)\times i\R.
\end{equation}
Set $\tilde\phi:=\cF^{-1}\big(\tilde f(i\,\cdot\,)\big)$, where $\tilde f(i\,\cdot\,)$ denotes the function $\tilde f(i\,\cdot\,):\R\to\C,\;\xi\mapsto \tilde f(i\xi)$. Then $\phi=\big(\id{\cS'(\R)}+\frac{\dl}{\dl t}\big)^N\tilde\phi$ by the analogue of Lemma~\ref{ZemThm}(ii) for the Fourier transform. We also have $\supp\phi\subset\supp\tilde\phi$ and, if $\supp\tilde\phi\subset[0,\infty)$, 
$(\cL\phi)(z)=(1+z)^N(\cL\tilde\phi)(z)$ for all $z\in(0,\infty)+i\R$ by Lemma~\ref{ZemThm}(ii).
Therefore it suffices to show
\begin{equation}\label{PaleyWienerTilde}
\supp\tilde\phi\subset[0,\infty)\quad\text{ and }\quad \big(\mathcal L\tilde\phi\big)(z)=\tilde f(z),\;z\in(0,\infty)+i\R.
\end{equation}

For $c,b>0$ let $\Gamma_{c,b}$ be the closed, rectangular path of integration with vertices $-ib$, $c-ib$, $c+ib$, $ib$ and counterclockwise orientation. By Cauchy's integral theorem we know 
\begin{equation}\label{oint}
\oint_{\epsilon+\Gamma_{c,b}}e^{tz}\tilde f(z)\dl z=0,\qquad t\in\R,
\end{equation}
for all $\epsilon>0$. Since $\tilde f$ is uniformly continuous on bounded subsets of $[0,\infty)+i\R$, \eqref{oint} remains true for $\epsilon=0$. Considering the limit $b\to\infty$ and using $\eqref{PaleyWienerEq1}$ we obtain
\[\int_\R e^{ti\xi}\tilde f(i\xi)\dl\xi=\int_\R e^{t(c+i\xi)}\tilde f(c+i\xi)\dl\xi,\quad t\in\R,\]
and therefore
\begin{equation}\label{PaleyWienerEq2}
\tilde\phi(t)=\frac{1}{2\pi}\int_\R e^{t(c+i\xi)}\tilde f(c+i\xi)\dl\xi,\quad t\in\R,
\end{equation}
for all $c>0$. Note that $\tilde\phi$ can be assumed to be continuous since $\cF\tilde\phi=\tilde f(i\,\cdot\,)\in L_1(\R)$ due to \eqref{PaleyWienerEq1}.

To prove the first assertion in $\eqref{PaleyWienerTilde}$ we denote further by $\Lambda_{c,b}$ the closed, rectangular path of integration with vertices $c-ib$, $(c+b)-ib$, $(c+b)+ib$, $c+ib$ and counterclockwise orientation,  $c,b>0$. Fix $t<0$ and $c>0$ and observe that
\[0=\oint_{\Lambda_{c,b}}e^{tz}\tilde f(z)\dl z\longrightarrow -\int_\R e^{t(c+i\xi)}\tilde f(c+i\xi)\dl\xi,\quad b\to\infty,\]
by Cauchy's integral theorem and \eqref{PaleyWienerEq1}. Thus $\tilde\phi(t)=0$ for all $t<0$ by \eqref{PaleyWienerEq2}.

The second assertion in $\eqref{PaleyWienerTilde}$ now follows by multiplying both sides of \eqref{PaleyWienerEq2} with $e^{-ct}$, applying the Fourier transform and using Lemma~\ref{ZemThm}(i).
\end{proof}

\subsection{Estimates for the Helmholtz equation}
\label{Estimates for the Helmholtz equation}

The application of the Laplace transform w.r.t.\ the time variable to Eq.~\eqref{SHE} will lead to a Helmholtz equation on $\dom$ with zero Dirichlet boundary condition and stochastic right hand side, cf.\ Lemma~\ref{StoGriEllEqnLem}. In this subsection we consider the same equation with deterministic right hand side. We derive a decomposition of its solution into a regular and a singular part (based on the decomposition of the solution to the Poisson problem) and state estimates from \cite{Gri87}, \cite{Gri95} for the $H^2(\dom)$-norm of the regular part and the coefficient of the singular part. In Subsection~\ref{Decomposition of the transformed equation}, this decomposition and these estimates will be applied to the solution to the random Helmholtz equation resulting from Eq.~\eqref{SHE}.

Given $g\in L_2(\dom)$ and $z\in\C\setminus\sigma(\Lap)$, we are interested in the unique solution $w$ in $H^1_0(\dom)$ to the problem
\begin{equation}\label{Helmholtz}
-\Delta w+zw=g.
\end{equation}
We may rewrite \eqref{Helmholtz} in the form of a Poisson equation,
\[-\Delta w=\tilde g:=\left[\id{L_2(\dom)}-z\big(z\id{L_2(\dom)}-\Lap\big)^{-1}\right]g.\]
By what has been said at the beginning of Section~\ref{Main result}, there exists a unique $c\in\C$, given by $c=\langle \tilde g,v_0\rangle_{L_2(\dom)}$, such that $w-cS$ belongs to $H^2(\dom)\cap H^1_0(\dom)$. In what follows we write $w=w(z)$ and $c=c(z)$ to indicate the dependence on $z$. Thus,
\begin{equation}\label{HelmholtzEq1}
w(z)-c(z)S\in H^2(\dom)\cap H^1_0(\dom).
\end{equation}
Note that we can rewrite $c(z)$ in the form
\begin{equation}\label{c(z)}
c(z)=\big\langle g,\overline{v(z)}\big\rangle_{L_2(\dom)}
\end{equation}
with $v(z)\in L_2(\dom)$ as in $\eqref{v1}$.

For $z\in\C\setminus(-\infty,0)$ we have $S-e^{-r\sqrt z}S\in H^2(\dom)\cap H^1_0(\dom)$, and consequently
\begin{equation}\label{HelmholtzEq2}
w(z)-c(z)e^{-r\sqrt z}S\in H^2(\dom)\cap H^1_0(\dom)
\end{equation}
with the same constant $c(z)\in\C$ as in \eqref{HelmholtzEq1}. Here and below we make slight abuse of notation and write $e^{-r\sqrt z}$ for the function
\[\dom\ni x=(x_1,x_2)=(r\cos\theta,r\sin\theta)\mapsto e^{-r\sqrt z}\in\C.\]
By $\sqrt z$ we mean the complex root of $z\in\C\setminus(-\infty,0)$ whose real part is nonnegative.
It turns out that defining the regular part $w_{\mathrm R}(z)$ of $w(z)$ as the function in \eqref{HelmholtzEq2} instead of the one in \eqref{HelmholtzEq1}, i.e., setting $w_{\mathrm R}(z):=w(z)-c(z)e^{-r\sqrt z}S$, leads to a faster asymptotic decay of the $H^2(\dom)$-norm of $w_{\mathrm R}(z)$ in dependence of $z$. For this reason we define $w_{\mathrm R}(z)$ as above, so that
\begin{equation}\label{GriDecomp}
w(z)=w_{\mathrm R}(z)+c(z)e^{-r\sqrt z}S.
\end{equation}

The following result is taken from \cite[Section 2]{Gri87}; compare also \cite[Section 2.5.2]{Gri92}, \cite[Theorem 5.1]{Gri95}. Recall that we have set $\alpha=\pi/\gamma$, where $\gamma\in(\pi,2\pi)$ is the interior angle of $\partial\dom$ at zero.
\begin{thm}\label{GriEllEst}
Consider the decomposition $\eqref{GriDecomp}$ of the solution $w=w(z)\in H^1_0(\dom)$ to Eq.~$\eqref{Helmholtz}$. Given any angle $\theta_0\in (0,\pi)$, there exists a constant $C>0$, depending only on $\theta_0$, $\dom$ and the cut-off function $\eta$ in \eqref{defS}, such that
\[
\nnrm{w_{\mathrm R}(z)}{H^2(\dom)}+(1+|z|)^{(1-\alpha)/2}|c(z)| \leq C\nnrm{g}{L_2(\dom)}
\]
for all $g\in L_2(\dom)$ and $z\in\C$ with $|\arg z| \leq\theta_0$.
\end{thm}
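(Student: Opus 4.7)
The plan is to follow Grisvard's strategy from \cite{Gri87}: first reduce the Helmholtz problem to a Poisson problem whose singular decomposition is already available, then modify the singular part by the factor $e^{-r\sqrt z}$ to gain asymptotic decay in $z$, and finally estimate the regular and singular parts separately using a sectorial resolvent bound and a direct polar-coordinates computation.

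First I would rewrite \eqref{Helmholtz} as $-\Delta w(z)=\tilde g$ with $\tilde g:=g-zw(z)\in L_2(\dom)$ and apply the standard singular decomposition for the Poisson problem to get $w(z)=\tilde w_{\mathrm R}(z)+c(z)S$ with $\tilde w_{\mathrm R}(z)\in H^2(\dom)\cap H^1_0(\dom)$ and $c(z)=\langle\tilde g,v_0\rangle_{L_2(\dom)}$. Using the resolvent identity together with the self-adjointness of $\Lap$ on $L_2(\dom;\R)$ identifies this coefficient with the expression in \eqref{c(z)}. Since $1-e^{-r\sqrt z}$ vanishes like $r\sqrt z$ at the origin, the product $(1-e^{-r\sqrt z})S$ lies in $H^2(\dom)\cap H^1_0(\dom)$ and can be absorbed into the regular part, yielding the target decomposition \eqref{GriDecomp}.

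For the estimate on $|c(z)|$, Cauchy--Schwarz reduces the claim to the sectorial bound $\nnrm{v(z)}{L_2(\dom)}\le C_{\theta_0}(1+|z|)^{-(1-\alpha)/2}$. Expanding $v(z)=-\Lap(z-\Lap)^{-1}v_0$ in the eigenbasis of $-\Lap$, using the sectorial inequality $|z+\mu|\ge\kappa(\theta_0)(|z|+\mu)$ and the elementary bound $\sup_{\mu\ge 0}\mu^{2-\beta}(|z|+\mu)^{-2}\le C|z|^{-\beta}$ for $\beta\in[0,2]$, the claim amounts to showing that $v_0$ belongs to the endpoint real-interpolation space $(L_2(\dom),D(\Lap))_{(1-\alpha)/2,\infty}$. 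This endpoint regularity follows from the explicit structure $v_0=(1/\pi)(\psi_0-\varphi_0)$ with $\psi_0\sim r^{-\alpha}\sin(\alpha\theta)$ near the corner and $\varphi_0\in D(\Lap)$.

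For the estimate on $\nnrm{w_{\mathrm R}(z)}{H^2(\dom)}$, the plan is to exploit the elliptic inequality $\nnrm{w_{\mathrm R}(z)}{H^2(\dom)}\le C(\nnrm{\Delta w_{\mathrm R}(z)}{L_2(\dom)}+\nnrm{w_{\mathrm R}(z)}{L_2(\dom)})$ valid for elements of $H^2(\dom)\cap H^1_0(\dom)$ on any Lipschitz domain, and substitute $\Delta w_{\mathrm R}(z)=zw(z)-g-c(z)\Delta(e^{-r\sqrt z}S)$. The sectorial resolvent bound $\nnrm{w(z)}{L_2(\dom)}\lesssim(1+|z|)^{-1}\nnrm{g}{L_2(\dom)}$ controls the first two summands by $\nnrm{g}{L_2(\dom)}$. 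For the singular correction, a direct polar-coordinates computation using that $r^\alpha\sin(\alpha\theta)$ is harmonic and $\eta\equiv 1$ near the origin yields
\[
\Delta\bigl(e^{-r\sqrt z}\eta\, r^\alpha\sin(\alpha\theta)\bigr)=e^{-r\sqrt z}\eta\sin(\alpha\theta)\bigl[z\,r^\alpha-(1+2\alpha)\sqrt z\, r^{\alpha-1}\bigr]+\text{commutator with }\eta.
\]
After the substitution $s=r\sqrt{|z|}$, a gamma-integral calculation shows that the $L_2(\dom)$-norm of the main term grows like $(1+|z|)^{(1-\alpha)/2}$, while the commutator with $\eta$ is uniformly bounded in $z$ since $e^{-r\sqrt z}$ decays exponentially in $\sqrt{|z|}$ on the support of $\nabla\eta$ and $\Delta\eta$. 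Multiplying by $|c(z)|\lesssim(1+|z|)^{-(1-\alpha)/2}\nnrm{g}{L_2(\dom)}$ closes the estimate.

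The main obstacle is the \emph{endpoint} decay $\nnrm{v(z)}{L_2(\dom)}\lesssim(1+|z|)^{-(1-\alpha)/2}$: since the dual singular function $v_0$ lies in $H^s(\dom)$ only for $s<1-\alpha$, a direct Sobolev-scale interpolation falls short at the critical index and one is forced to work with the Besov-type real-interpolation regularity of $v_0$ rather than a plain Sobolev bound. A secondary technical point is the validity of the elliptic inequality $\nnrm{\,\cdot\,}{H^2(\dom)}\lesssim\nnrm{\Delta\,\cdot\,}{L_2(\dom)}+\nnrm{\,\cdot\,}{L_2(\dom)}$ on $H^2(\dom)\cap H^1_0(\dom)$ for the (possibly non-convex) polygon $\dom$, which has to be established by localising to each corner and combining the convex-corner estimate with interior regularity.
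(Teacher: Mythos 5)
Your overall strategy is sound and reproduces the essential content of Grisvard's argument; note that the paper itself does not prove Theorem~\ref{GriEllEst} but imports it from \cite{Gri87}, \cite{Gri92}, \cite{Gri95}, so the relevant comparison is with Grisvard's proof rather than with anything in the text. Your reduction to the Poisson problem, the identification $c(z)=\langle g,\overline{v(z)}\rangle_{L_2(\dom)}$ via self-adjointness, the absorption of $(1-e^{-r\sqrt z})S$ into the regular part, and the $H^2$-estimate via $\|v\|_{H^2(\dom)}\le C\|\Delta v\|_{L_2(\dom)}$ on $H^2(\dom)\cap H^1_0(\dom)$ (which is \cite[Theorem~2.2.3]{Gri92}, so you do not need to re-derive it by corner localisation) combined with the explicit computation of $\Delta(e^{-r\sqrt z}S)$ all match; your formula $\Delta(e^{-r\sqrt z}r^\alpha\sin(\alpha\theta))=e^{-r\sqrt z}\sin(\alpha\theta)[zr^\alpha-(1+2\alpha)\sqrt z\,r^{\alpha-1}]$ and the resulting growth rate $(1+|z|)^{(1-\alpha)/2}$ check out (the exponent $2\alpha-1>0$ keeps the second term integrable at the origin). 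Where you genuinely diverge is the coefficient bound: you route $\|v(z)\|_{L_2(\dom)}\lesssim(1+|z|)^{-(1-\alpha)/2}$ through the spectral decomposition of $\Lap$ and the membership $v_0\in(L_2(\dom),D(\Lap))_{(1-\alpha)/2,\infty}$, which is indeed the sharp condition for this decay but is the one step you assert rather than prove. Grisvard's route --- recorded in Remark~\ref{remHelmholtz}(i) of the paper --- is more elementary: one writes $v(z)=\frac1\pi(\psi(z)-\varphi(z))$ with the parameter-dependent dual singular function $\psi(z)=\eta\,e^{-r\sqrt z}r^{-\alpha}\sin(\alpha\theta)$, and the decay of $\|\psi(z)\|_{L_2(\dom)}$ follows from the same gamma-integral substitution you already use for the singular correction, namely $\int_0^\infty e^{-2cr}r^{1-2\alpha}\,\dl r=\Gamma(2-2\alpha)(2c)^{2\alpha-2}$ with $c=\Re\sqrt z\geq \kappa(\theta_0)|z|^{1/2}$ on the sector, while $\varphi(z)$ is controlled by the sectorial resolvent bound. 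If you insist on the interpolation route you must still verify the endpoint membership of $v_0$, and the natural way to do so is essentially this same computation restricted to real $z>0$ (via the characterization of $(L_2(\dom),D(\Lap))_{\theta,\infty}$ by $\sup_{\lambda>0}\lambda^\theta\|\Lap(\lambda-\Lap)^{-1}v_0\|_{L_2(\dom)}$) --- so the direct approach both closes your remaining gap and shortens the argument.
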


\begin{rem}\label{remHelmholtz}
{\bf (i)}
The proof of Theorem~\ref{GriEllEst} is based on an alternative representation of the function $v(z)\in L_2(\dom)$ in \eqref{c(z)} that we have defined in \eqref{v1}.
For $t\in(0,\infty)+i\R$, let $\psi(z)\in L_2(\dom)$ be given by
\[
\psi(z)(x):=\psi(z,x):=\eta(x)e^{-r\sqrt z}r^{-\alpha}\sin(\alpha\theta),\quad x=(x_1,x_2)=(r\cos\theta,r\sin\theta)\in\dom,
\]
with $\eta\in C^\infty(\overline\dom;\R)$ as in \eqref{defS}. One can show that $\big(-\Delta +z\id{L_2(\dom)}\big)\psi(z)\in H^{-1}(\dom)$, cf.~\cite[Lemma~2.3]{Gri87}, \cite[Lemma~2.5.4]{Gri92}. Let $\varphi(z)$ be the unique solution in $H^1_0(\dom)$ to the problem $\big(-\Delta +z\id{L_2(\dom)}\big)\varphi(z)=\big(-\Delta +z\id{L_2(\dom)}\big)\psi(z)$. Then, the arguments in \cite[Section~2]{Gri87} and \cite[Section~2.5.2]{Gri92} imply
\[
v(z)=\frac 1\pi(\psi(z)-\varphi(z)).
\]
Note that this representation of $v(z)$ is slightly different from the one derived in \cite[Section~2]{Gri87}; its proof follows along the lines of \cite[Section~2.5.2]{Gri92}.

{\bf (ii)}
In the course of the proof of Theorem~\ref{mainRes} it will be convenient to make use of the fact that $\Delta\big(H^2(\dom)\cap H^1_0(\dom)\big)$ is a closed subspace of $L_2(\dom)$. This follows from \cite[Theorem~2.2.3]{Gri92}. In particular, the orthogonal projection $P_N$ to the orthogonal complement $N:=\big[\Delta\big(H^2(\dom)\cap H^1_0(\dom)\big)\big]^\bot$ in $L_2(\dom)$ is well defined. Using this notation, it is not difficult to see that $N=\C\cdot v(0)=\C\cdot v_0$ and
\[v(0)=v_0=-\frac{P_N(\Delta S)}{\nnrm{\Delta S}{L_2(\dom)}^2}.\]
\end{rem}

\section{Proof of the main result}
\label{Proof of the main result}

In this section we present the proof of Theorem~\ref{mainRes}. We suppose that all assumptions of Sections~\ref{Setting and assumptions} and \ref{Main result} are fulfilled.

\subsection{Laplace transform of the stochastic heat equation}
\label{Laplace transform of the stochastic heat equation}

Let us denote the $\omega$-wise, vector-valued Laplace transform w.r.t.~$t$ of the mild solution $u=(u(t))_{t\in[0,T]}$ to Eq.~\eqref{SHE} by
\begin{equation}\label{U(z)}
U(z):=\int_0^T e^{-zt}u(t)\dl t,\quad z\in\C.
\end{equation}
The integral in \eqref{U(z)} is an $\omega$-wise Bochner integral in $H^1_0(\dom)$.
Recall from Subsection~\ref{The solution process as a tensor product-valued random variable} that we have fixed a modification of $u$ such that all
trajectories $[0,T]\ni t\mapsto u(\omega,t)\in H^1_0(\dom)$ belong to $L_2(0,T;H^1_0(\dom))$ and 
the mapping $\Omega\ni\omega\mapsto u(\omega)\in L_2(0,T;H^1_0(\dom))$ is
$\cF_T/\mathcal B\big(L_2(0,T;H^1_0(\dom))\big)$-measurable. We have
\begin{equation}\label{U(z)inH10}
U(z)\in L_2\big(\Omega,\cF_T,\wP;H^1_0(\dom)\big),\quad z\in\C,
\end{equation}
as a direct consequence of \eqref{uinH10}.
Also recall from Section~\ref{Main result} the definition \eqref{H(z)} of $H(z)$,
\[
H(z)=\int_0^T e^{-zt}F(u(t))\dl t+\int_0^T e^{-zt}G(u(t))\dl W(t)-e^{-zT}u(T)+u_0,\quad z\in \C,
\]
and the assertion \eqref{H(z)inL2},
\[H(z)\in  L_2\big(\Omega,\cF_T,\wP;L_2(\dom)\big),\quad z\in\C.\]

The following lemma describes how the Laplace transform w.r.t.~$t$ turns Eq.~\eqref{SHE} into a random Helmholtz equation.

\begin{lem}\label{StoGriEllEqnLem}
For all $z\in\C$ we have, $\wP$-almost surely,
\begin{equation}\label{StoGriEllEqn2}
\big\langle \nabla U(z),\nabla\varphi\big\rangle_{L_2(\dom;\C^2)}+\big\langle zU(z),\varphi\big\rangle_{L_2(\dom)} = \big\langle H(z) ,\varphi\big\rangle_{L_2(\dom)}\;\text{ for all }\;\varphi\in H^1_0(\dom).
\end{equation}
Thus, for all $z\in\C$ the equality
\[-\Delta U(z)+zU(z)=H(z)\]
holds as an equality in $L_2\big(\Omega,\cF_T,\wP;L_2(\dom)\big)$.
\end{lem}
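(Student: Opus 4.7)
The strategy is to apply the deterministic It\^o product rule to the continuous scalar semimartingale $t\mapsto \langle u(t),\varphi\rangle_{L_2(\dom)}$ multiplied by the $C^1$ weight $t\mapsto e^{-zt}$, for each fixed test function $\varphi\in H^1_0(\dom)$, and then to extend the resulting identity to all $\varphi$ simultaneously by continuity.

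The starting point is the variational form of~\eqref{SHE} with $H^1_0(\dom)$ test functions. By the equivalence of mild and analytically weak solutions in the semigroup framework (see~\cite[Theorem~9.15]{PesZab}), together with the elementary identity $\langle u(s),\Delta\varphi\rangle = -\langle\nabla u(s),\nabla\varphi\rangle$ valid for $\varphi\in D(\Lap)$ and a density argument using that $D(\Lap)$ is dense in $H^1_0(\dom)$, for every fixed $\varphi\in H^1_0(\dom)$ the process $t\mapsto\langle u(t),\varphi\rangle$ is $\wP$-a.s.\ a continuous scalar semimartingale with differential
\[\dl\langle u(t),\varphi\rangle = -\langle\nabla u(t),\nabla\varphi\rangle\,\dl t + \langle F(u(t)),\varphi\rangle\,\dl t + \langle G(u(t))\dl W(t),\varphi\rangle.\]
Since $e^{-zt}$ is deterministic and $C^1$ in $t$, the classical product rule combined with $\int_0^T \dl(e^{-zt}\langle u(t),\varphi\rangle)=e^{-zT}\langle u(T),\varphi\rangle-\langle u_0,\varphi\rangle$ gives
\[e^{-zT}\langle u(T),\varphi\rangle - \langle u_0,\varphi\rangle = -z\langle U(z),\varphi\rangle + \int_0^T e^{-zt}\,\dl\langle u(t),\varphi\rangle.\]
Substituting the semimartingale decomposition, commuting the scalar weight $e^{-zt}$ past the Bochner integral (using that the gradient $\nabla:H^1_0(\dom)\to L_2(\dom;\C^2)$ is bounded and linear, so $\int_0^T e^{-zt}\nabla u(t)\,\dl t = \nabla U(z)$) and past the vector-valued It\^o integral (by definition of the latter with a deterministic scalar integrand, together with Remark~\ref{RemStoIntCR} to handle the complex-valued $\varphi$), and rearranging, one arrives at
\[\langle\nabla U(z),\nabla\varphi\rangle + z\langle U(z),\varphi\rangle = \langle H(z),\varphi\rangle,\]
where the right-hand side is precisely the $L_2(\dom)$-pairing of $\varphi$ with $H(z)$ as defined in~\eqref{H(z)}.

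To remove the $\varphi$-dependence of the exceptional null set, note that both sides of this identity depend continuously (antilinearly) on $\varphi\in H^1_0(\dom)$ because $U(z)\in H^1_0(\dom)$ and $H(z)\in L_2(\dom)$ $\wP$-a.s.\ by~\eqref{U(z)inH10}--\eqref{H(z)inL2}. Choosing a countable dense subset of $H^1_0(\dom)$ and intersecting the countably many corresponding $\wP$-full sets produces a single full set on which~\eqref{StoGriEllEqn2} holds for every $\varphi\in H^1_0(\dom)$ by continuity. Since $H(z)-zU(z)\in L_2(\dom)$ almost surely, the weak identity then forces $U(z)\in D(\Lap)$ with $-\Delta U(z)+zU(z)=H(z)$ in $L_2(\dom)$; in view of~\eqref{U(z)inH10} and~\eqref{H(z)inL2}, this is the claimed equality in $L_2(\Omega,\cF_T,\wP;L_2(\dom))$.

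The most delicate point I anticipate is the first step---justifying the variational form of the mild solution with $H^1_0(\dom)$ (rather than $D(\Lap)$) test functions. The It\^o product rule for a deterministic $C^1$ factor is routine, and the remaining work is essentially bookkeeping of commutations between the scalar weight $e^{-zt}$ and Bochner/It\^o integrals.
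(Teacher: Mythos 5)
Your proposal is correct and follows essentially the same route as the paper: both start from the weak (variational) formulation of the mild solution, apply It\^o's product rule to $e^{-zt}\langle u(t),\varphi\rangle$ with the deterministic $C^1$ weight, and remove the $\varphi$-dependence of the null set via separability and continuity in $\varphi$. The only cosmetic difference is that the paper keeps $\varphi\in D(\Lap)$ throughout the It\^o step and converts $\langle U(z),\Lap\varphi\rangle$ into $-\langle\nabla U(z),\nabla\varphi\rangle$ at the end using $U(z)\in H^1_0(\dom)$, whereas you upgrade the variational identity to $H^1_0(\dom)$ test functions first; both orderings work.
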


\begin{proof}
Because of the separability of $H^1_0(\dom)$ it suffices to show that, for all $z\in\C$ and $\varphi\in H^1_0(\dom)$, the equality $\langle \nabla U(z),\nabla\varphi\rangle_{L_2(\dom;\C^2)}+\langle zU(z),\varphi\rangle_{L_2(\dom)} = \langle H(z),\varphi\rangle_{L_2(\dom)}$ holds $\wP$-almost surely.

Since the mild solution $u$ to Eq.~\eqref{SHE} is also a weak
solution (cf.~\cite[Theorem~9.15]{PesZab}), we have, for all $t\in[0,T]$ and all real-valued $\varphi\in D(\Lap)$,
\begin{equation}\label{weakSolution}
\begin{aligned}
\big\langle u(t)-u_0,\varphi\big\rangle_{L_2(\dom)}
&=\int_0^t\left[\big\langle u(s),\Lap\varphi\big\rangle_{L_2(\dom)} +\big\langle F\big(u(s)\big),\varphi\big\rangle_{L_2(\dom)}\right]\dl s\\
&\quad+\int_0^t\big\langle G\big(u(s)\big)[\,\cdot\,],\varphi\big\rangle_{L_2(\dom)}\dl W(s)
\end{aligned}
\end{equation}
$\wP$-almost surely. Here, for fixed $\omega\in\Omega$, $\big\langle G\big(u(\omega,s)\big)[\,\cdot\,],\varphi\big\rangle_{L_2(\dom)}$ denotes the operator in $\hs\big(U_0;\C\big)\cong\hs\big(U_0;\R^2\big)$ defined by  $\big\langle G\big(u(\omega,s)\big)[\,\cdot\,],\varphi\big\rangle_{L_2(\dom)}w:=$$\big\langle G\big(u(\omega,s)\big)w,\varphi\big\rangle_{L_2(\dom)}$ for $w\in U_0$.
Obviously, \eqref{weakSolution} extends to complex-valued test functions $\varphi\in D(\Lap)$.

As a consequence of \eqref{weakSolution}, for all $\varphi\in D(\Lap)$ the process
$\big(\langle u(t),\varphi\rangle_{L_2(\dom)}\big)_{t\in[0,T]}$ has a modification that is a
continuous semimartingale in $\C\cong\R^2$. For this modification, $\eqref{weakSolution}$
holds $\wP$-almost surely for all $t\in[0,T]$ simultaneously. Let us fix a $\varphi$ and this modification and apply
It{\^o}'s formula (see \cite{Met}, \cite{MetPel80}) to the $\C^2$-valued continuous semimartingale
\[\Big(\big(e^{-zt},\langle u(t),\varphi\rangle_{L_2(\dom)}\big)\Big)_{t\in[0,T]}\]
and the function $f:\C^2\to\C,(z_1,z_2)\mapsto f(z_1,z_2)=z_1\cdot z_2$.  We identify
these objects in the usual way with the corresponding $\R^4$-valued semimartingale and the function
$f:\R^4\to\R^2,(x_1,y_1,x_2,y_2)\mapsto f(x_1,y_1,x_2,y_2)=(x_1x_2-y_1y_2,x_1y_2+y_1x_2)$, compare Remark~\ref{RemStoIntCR}. For
clarity we indicate multiplications of two complex numbers by dots in the following calculations. It{\^o}'s formula gives
\begin{equation}\label{StoGriEllEqnLem1}
\begin{aligned}
\big\langle e^{-zT}\cdot u(T)&-u_0,\varphi\big\rangle_{L_2(\dom)}\\
&= \int_0^T\langle u(t),\varphi\rangle_{L_2(\dom)}\cdot \dl e^{-zt} +\int_0^T e^{-zt}\cdot \dl \langle u(t),\varphi\rangle_{L_2(\dom)}\\
&\quad+\frac 12 \tr\int_0^T D^2f\big(e^{-zt},\langle u(t),\varphi\rangle_{L_2(\dom)}\big)\,\dl \big\llbracket \big(e^{-z(\cdot)},\langle u(\cdot),\varphi\rangle_{L_2(\dom)}\big)\big\rrbracket_t,
\end{aligned}
\end{equation}
where the last integral equals zero since the cross-variation of a process with bounded variation and a
continuous process is zero, and $\frac{\partial^2}{\partial x_1\partial y_1}f=\frac{\partial^2}{\partial x_2\partial y_2}f=0$. (Actually, this application of It{\^o}'s formula is nothing but integration by parts, compare \cite[Section 26.9]{Met}.)

A common monotone class argument yields that the $\C\cong\R^2$-valued semimartingale integral $\int_0^T\langle u(t),\varphi\rangle_{L_2(\dom)}\cdot\dl e^{-zt}$ is an $\omega$-wise Lebesgue-Stieltjes integral for almost all $\omega\in\Omega$, so that we obtain
\begin{equation}\label{StoGriEllEqnLem2}
\begin{aligned}
\int_0^T\langle u(t),\varphi\rangle_{L_2(\dom)}\cdot\dl e^{-zt}&=\int_0^T\langle u(t),\varphi\rangle_{L_2(\dom)}\cdot(-z)\cdot e^{-zt}\,\dl t\\
&=-\Big\langle z\cdot\int_0^T e^{-zt}u(t)\,\dl t,\varphi\Big\rangle_{L_2(\dom)},
\end{aligned}
\end{equation}
$\int_0^T e^{-zt}u(t)\,\dl t$ being an $\omega$-wise Bochner integral in $H^1_0(\dom)$. (To be more precise, since we are dealing with complex multiplication, each component of the $\C\cong\R^2$-valued semimartingale integral $\int_0^T\langle u(t),\varphi\rangle_{L_2(\dom)}\cdot\dl e^{-zt}$ is a sum of $\omega$-wise Lebesgue-Stieltjes integrals for almost all $\omega\in\Omega$.)
By \eqref{weakSolution} and a standard rule from stochastic calculus (see \cite[Section 26.4]{Met}) we have
\begin{equation}\label{StoGriEllEqnLem3}
\begin{aligned}
\int_0^T e^{-zt}\cdot\dl\langle u(t),\varphi\rangle_{L_2(\dom)} 
&= \int_0^T e^{-zt}\cdot\left[\big\langle u(t),\Lap\varphi\big\rangle_{L_2(\dom)} +\big\langle F\big(u(t)\big),\varphi\big\rangle_{L_2(\dom)}\right]\dl t\\
&\quad+\int_0^T e^{-zt}\cdot\big\langle G\big(u(t)\big)[\,\cdot\,],\varphi\big\rangle_{L_2(\dom)}\dl W(t).
\end{aligned}
\end{equation}

Due to the construction of the stochastic integral,
\begin{equation}\label{StoGriEllEqnLem4}
\int_0^T e^{-zt}\cdot\big\langle G\big(u(t)\big)[\,\cdot\,],\varphi\big\rangle_{L_2(\dom)}\dl W(t)=\Big\langle \int_0^Te^{-zt}\cdot G\big(u(t)\big)\dl W(t),\varphi\Big\rangle_{L_2(\dom)},
\end{equation}
and due to the construction of the Bochner integral and the fact that $U(z)=\int_0^T e^{-zt}u(t)\,\dl t$ takes values in $H^1_0(\dom)$,
\begin{equation}\label{StoGriEllEqnLem5}
\begin{aligned}
\int_0^T e^{-zt}\cdot&\left[\big\langle u(t),\Lap\varphi\big\rangle_{L_2(\dom)} +\big\langle F\big(u(t)\big),\varphi\big\rangle_{L_2(\dom)}\right]\dl t\\
&=\Big\langle \int_0^Te^{-zt}\cdot u(t)\,dt,\Lap\varphi\Big\rangle_{L_2(\dom)}
+\Big\langle \int_0^Te^{-zt}\cdot F\big(u(t)\big)\,\dl t,\varphi\Big\rangle_{L_2(\dom)}\\
&=-\Big\langle\nabla U(z),\nabla\varphi\Big\rangle_{L_2(\dom;\C^2)}
+\Big\langle \int_0^Te^{-zt}\cdot F\big(u(t)\big)\,\dl t,\varphi\Big\rangle_{L_2(\dom)}.
\end{aligned}
\end{equation}
The combination of \eqref{StoGriEllEqnLem1}, \eqref{StoGriEllEqnLem2}, \eqref{StoGriEllEqnLem3}, \eqref{StoGriEllEqnLem4} and \eqref{StoGriEllEqnLem5} finishes the proof.
\end{proof}

If we find a continuous modification of the $L_2(\dom)$-valued random field $(H(z))_{z\in\C}$, then for this modification Lemma~\ref{StoGriEllEqnLem} will immediately be strengthened: The assertion 
\begin{equation*}
\big\langle \nabla U(z),\nabla\varphi\big\rangle_{L_2(\dom;\C^2)}+\big\langle zU(z),\varphi\big\rangle_{L_2(\dom)} = \big\langle H(z) ,\varphi\big\rangle_{L_2(\dom)}\;\text{ for all }\;\varphi\in H^1_0(\dom),\;z\in\C,
\end{equation*}
will hold $\wP$-almost surely.
In other words: $\wP$-almost surely,
\[-\Delta U(z)+zU(z)=H(z)\;\text{ for all }\;z\in\C.\]
In order to be able to apply Theorem~\ref{PaleyWiener} in a later step of our proof, we are going to show that the $L_2(\dom)$-valued random field $(H(z))_{z\in\C}$ has a holomorphic modification. One way to do this is with the help of the next lemma, which, above all, will play a crucial role in Subsections~\ref{Decomposition of the transformed equation} and \ref{Inverse transform}.

\begin{lem}\label{X(phi)Modification}
Consider the linear and continuous mapping
\begin{equation*}
X:L_2(0,T)\to L_2\big(\Omega,\cF_T,\wP;L_2(\dom)\big),\;\phi\mapsto X(\phi)
\end{equation*}
defined by
\begin{equation}\label{X(phi)}
X(\phi):=\int_0^T\phi(t)F\big(u(t)\big)\dl t+\int_0^T\phi(t)G\big(u(t)\big)\,\dl W(t),\; \phi\in L_2(0,T),
\end{equation}
and let $s>1/2$. There exists an operator-valued random variable
\begin{equation*}
\widetilde X\in L_2\big(\Omega,\cF_T,\wP;\hs(H^s(0,T);L_2(\dom))\big)
\end{equation*}
such that the assertion
\begin{equation}\label{eqX(phi)Modification}
X(\phi)(\omega)=\widetilde X(\omega)(\phi)\;\text{ for $\wP$-almost all }\omega\in\Omega
\end{equation}
holds for all $\phi\in H^s(0,T)$. 

Moreover, we have
\begin{equation}\label{EstTildeX}
\begin{aligned}
\E\gnnrm{\widetilde X}{\hs(H^s(0,T);L_2(\dom))}^2
&\leq\;C\Big(\E\int_0^T\gnnrm{F\big(u(t)\big)}{L_2(\dom)}^2\dl t+\E\sup_{t\in[0,T]}\gnnrm{G\big(\tilde u(t)\big)}{\hs(U_0;L_2(\dom))}^2\Big),
\end{aligned}
\end{equation}
where $C>0$ depends only on $s$ and $T$, and where $\tilde u=(\tilde u(t))_{t\in[0,T]}$ denotes the modification of $u=(u(t))_{t\in[0,T]}$ that is continuous in $L_2(\dom;\R)$, cf.~Theorem~\ref{thmKruLar}.
\end{lem}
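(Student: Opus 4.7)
The plan is to exploit that the embedding $\iota:H^s(0,T)\hookrightarrow L_2(0,T)$ is Hilbert–Schmidt for $s>1/2$ (by Weyl asymptotics of the eigenvalues of $(1-\partial_t^2)$ on $(0,T)$) and, equivalently, that point evaluations $\delta_t\in(H^s(0,T))'$ have uniformly bounded norms. One first fixes an orthonormal basis $(\phi_n)_{n\in\N}$ of $H^s(0,T)$ together with arbitrary representatives of $X(\phi_n)\in L_2(\Omega;L_2(\dom))$, and then defines
\[
\widetilde X(\omega)\,\phi\;:=\;\sum_{n\in\N}\langle\phi,\phi_n\rangle_{H^s(0,T)}\,X(\phi_n)(\omega),\qquad \phi\in H^s(0,T),
\]
on the event $\{\omega:\sum_n\nnrm{X(\phi_n)(\omega)}{L_2(\dom)}^2<\infty\}$, and as zero off this event. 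Measurability of $\omega\mapsto\widetilde X(\omega)$ into $\hs(H^s(0,T);L_2(\dom))$ then follows from separability of the HS space and measurability of the finite-rank truncations $\omega\mapsto\sum_{n\leq N}X(\phi_n)(\omega)\otimes\phi_n$.

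To prove $L_2$-integrability of $\nnrm{\widetilde X}{\hs(H^s(0,T);L_2(\dom))}$ together with the bound \eqref{EstTildeX}, split $X=X_F+X_G$ according to the drift and diffusion in \eqref{X(phi)}. For the Bochner-integral part, for each $\omega$ the map $\phi\mapsto X_F(\phi)(\omega)=\int_0^T\phi(t)F(u(\omega,t))\dl t$ is bounded from $L_2(0,T)$ to $L_2(\dom)$ with operator norm at most $\nnrm{F(u(\omega,\cdot))}{L_2(0,T;L_2(\dom))}$; composing with $\iota$ gives
\[
\gnnrm{X_F(\omega)\circ\iota}{\hs(H^s(0,T);L_2(\dom))}^2\;\leq\;\nnrm{\iota}{\hs}^2\,\int_0^T\nnrm{F(u(\omega,t))}{L_2(\dom)}^2\dl t,
\]
which after taking expectations yields the first term of \eqref{EstTildeX}. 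For the stochastic part, It\^o's isometry and Tonelli give
\[
\E\sum_{n\in\N}\nnrm{X_G(\phi_n)}{L_2(\dom)}^2
\;=\;\E\int_0^T\Bigl(\sum_{n\in\N}|\phi_n(t)|^2\Bigr)\,\gnnrm{G(u(t))}{\hs(U_0;L_2(\dom))}^2\dl t,
\]
and the inner sum equals the diagonal $K(t,t)$ of the reproducing kernel of $H^s(0,T)$, which is bounded uniformly in $t\in[0,T]$ because $\delta_t$ has uniformly bounded norm on $H^s(0,T)$ for $s>1/2$. Using that $u$ and $\tilde u$ coincide $\dl t\otimes\wP$-a.e., one estimates $\int_0^T\nnrm{G(u(t))}{\hs}^2\dl t\leq T\sup_{t\in[0,T]}\nnrm{G(\tilde u(t))}{\hs}^2$ pathwise, which produces the second term of \eqref{EstTildeX}.

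The main obstacle is the identification \eqref{eqX(phi)Modification}, i.e.\ that the $\omega$-wise object $\widetilde X(\omega)(\phi)$ agrees with the a priori defined stochastic object $X(\phi)$ for each individual $\phi\in H^s(0,T)$. The plan here is a density/continuity argument. The estimates above (applied with $F\equiv0$ and $G\equiv0$ separately) show that $X:L_2(0,T)\to L_2(\Omega;L_2(\dom))$ is bounded, hence its restriction to $H^s(0,T)$ is continuous. Truncating $\phi_N:=\sum_{n=1}^N\langle\phi,\phi_n\rangle_{H^s(0,T)}\phi_n\to\phi$ in $H^s(0,T)$ (hence in $L_2(0,T)$), one gets $X(\phi_N)\to X(\phi)$ in $L_2(\Omega;L_2(\dom))$; by linearity of the defining integrals, $X(\phi_N)(\omega)=\sum_{n=1}^N\langle\phi,\phi_n\rangle_{H^s(0,T)}X(\phi_n)(\omega)=\widetilde X(\omega)(\phi_N)$ up to a $\wP$-null set depending on $N$. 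Since $\widetilde X(\omega)(\phi_N)\to\widetilde X(\omega)(\phi)$ in $L_2(\dom)$ for every $\omega$ in the full-measure set above, extracting a subsequence along which the $L_2(\Omega)$-convergence $X(\phi_N)\to X(\phi)$ is almost sure yields the pointwise identification $\widetilde X(\omega)(\phi)=X(\phi)(\omega)$ for $\wP$-a.e.\ $\omega$, as required.
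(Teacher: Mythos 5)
Your argument is correct and follows essentially the same route as the paper: expand $X$ along an orthonormal basis of $H^s(0,T)$, control $\sum_n\E\nnrm{X(\phi_n)}{L_2(\dom)}^2$ via H\"older's inequality and It\^o's isometry using that the embedding $H^s(0,T)\hookrightarrow L_2(0,T)$ is Hilbert--Schmidt for $s>1/2$, and identify $\widetilde X(\cdot)(\phi)$ with $X(\phi)$ by continuity of $X$ on $H^s(0,T)$ together with a limiting argument along the truncations. Your variations (defining $\widetilde X$ $\omega$-wise on a good event rather than as an $L_2(\Omega;\hs)$-limit, and bounding the diffusion term via the reproducing-kernel diagonal $\sum_n|\phi_n(t)|^2=\nnrm{\delta_t}{(H^s(0,T))'}^2$ instead of pulling out $\sup_t\nnrm{G(\tilde u(t))}{\hs(U_0;L_2(\dom))}^2$ before summing $\nnrm{\phi_n}{L_2(0,T)}^2$) are cosmetic and do not change the substance of the proof.
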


\begin{proof}
Let $(\phi_k)_{k\in\N}$ be an orthonormal basis of $ H^s(0,T)$ and let $(\phi_k')_{k\in\N}:=(\langle\cdot,\phi_k\rangle_{ H^s(0,T)})_{k\in\N}$ be the respective dual orthonormal basis of $( H^s(0,T))'$. As described in Appendix~\ref{Tensor products of Sobolev spaces}, we identify $\phi_k'\otimes X(\phi_k)(\omega)\in( H^s(0,T))'\hat\otimes L_2(\dom)$ with the element in $\hs( H^s(0,T);L_2(\dom))$ that maps $\phi\in H^s(0,T)$ to $\langle\phi_k',\phi\rangle_{( H^s(0,T))'\times H^s(0,T)}X(\phi_k)(\omega)
=\phi_k'(\phi)X(\phi_k)(\omega)\in L_2(\dom)$. It is determined $\wP$-almost surely by \eqref{X(phi)}.
For $n,N\in\N$ we have
\begin{equation}\label{modX1}
\begin{aligned}
\E\Bigg\|\sum_{k=n}^N\,&
\phi_k'\otimes X(\phi_k)\Bigg\|_{\hs( H^s(0,T);L_2(\dom))}^2
=\; \sum_{k=n}^N\E\gnnrm{X(\phi_k)}{L_2(\dom)}^2\\
&\leq\;2\Bigg(\E\int_0^T\gnnrm{F\big(u(t)\big)}{L_2(\dom)}^2\dl t+\E\sup_{t\in[0,T]}\gnnrm{G\big(\tilde u(t)\big)}{\hs(U_0;L_2(\dom))}^2\Bigg) \sum_{k=n}^N\nnrm{\phi_k}{L_2(0,T)}^2.
\end{aligned}
\end{equation}
Here we have used H\"{o}lder's inequality and It\^{o}'s isometry. Note that $\E\nnrm{X(\phi_k)}{L_2(\dom)}^2$ and $\E\int_0^T\nnrm{F(u(t))}{L_2(\dom)}^2\dl t$ do not depend on the specific choice of a (measurable) modification of $u$. Moreover, both $\E\int_0^T\nnrm{F(u(t))}{L_2(\dom)}^2\dl t$ and $\E\sup_{t\in[0,T]}\nnrm{G(\tilde u(t))}{\hs(U_0;L_2(\dom))}^2$ are finite due to Assumption~\ref{AssFBu0} and Theorem~\ref{thmKruLar}.

Since for $s>1/2$ the embedding map $ H^s(0,T)\hookrightarrow L_2(0,T)$ is a Hilbert-Schmidt operator --- see Theorem 4.10.2 and Remark 4.10.2/4 in \cite{Tri78} --- the right hand side of \eqref{modX1} tends to zero as $n,N\to\infty$. This means that the limit
\begin{equation*}
\widetilde X:=L_2\big(\Omega,\cF_T,\wP;\hs\big( H^s(0,T);L_2(\dom)\big)\big)
\text{-}\lim_{N\to\infty}\sum_{k=1}^N\phi_k'\otimes X(\phi_k)
\end{equation*}
exists, and \eqref{EstTildeX} holds with a constant $C>0$ that depends only on $s$ and $T$.

Next, note that the evaluation at some $\phi\in H^s(0,T)$ is a continuous mapping from $L_2\big(\Omega,\cF_T,\wP;\hs\big( H^s(0,T);L_2(\dom)\big)\big)$ to $L_2(\Omega,\cF_T,\wP;L_2(\dom))$ and that the mapping $ H^s(0,T)\ni\phi\mapsto X(\phi)\in L_2(\Omega,\cF_T,\wP;L_2(\dom))$ is continuous. We obtain
\begin{equation*}
\begin{aligned}
\widetilde X(\,\cdot\,)(\phi)&=L_2(\Omega,\cF_T,\wP;L_2(\dom))\text{-}\lim_{N\to\infty}\sum_{k=1}^N
\phi_k'(\phi)X(\phi_k)\\
&= X\bigg( H^s(0,T)\text{-}\lim_{N\to\infty}\sum_{k=1}^N\phi_k'(\phi)\phi_k\bigg)\\
&= X\bigg( H^s(0,T)\text{-}\lim_{N\to\infty}\sum_{k=1}^N\langle \phi,\phi_k\rangle_{H^{s}(0,T)}\phi_k\bigg)\\
&= X(\phi)
\end{aligned}
\end{equation*}
for all $\phi\in H^s(0,T)$, where $\widetilde X(\,\cdot\,)(\phi)$ denotes the random variable $\Omega\ni\omega\mapsto\widetilde X(\omega)(\phi)\in L_2(\dom)$.
\end{proof}

\begin{rem}
The restriction of the mapping $X$ in Lemma \ref{X(phi)Modification} to the domain $C^\infty_0((0,T))\subset L_2(0,T)$ is an $L_2(\dom)$-valued, generalized stochastic process on $(0,T)$. It can be interpreted as the distributional time-derivative of the $L_2(\dom)$-valued stochastic process \begin{equation}\label{eqRemX(phi)Modification}
\Big(\int_0^tF\big(u(s)\big)\dl s+ \int_0^tG\big(u(s)\big)\dl W(s)\Big)_{t\in(0,T)}.
\end{equation} 
Since the embedding $H^s_0(0,T)\hookrightarrow H^s(0,T)$ is isometric, hence continuous, the $\omega$-wise restriction of the mapping $\widetilde X$ in Lemma \ref{X(phi)Modification} to the domain $H^s_0(0,T)$ belongs to \linebreak$L_2\big(\Omega,\cF_T,\wP;\hs\big( H^s_0(0,T);L_2(\dom)\big)\big)$. Thus, \eqref{eqX(phi)Modification} asserts that the distributional time-derivative of \eqref{eqRemX(phi)Modification} has a modification that extends to an element in \linebreak$L_2\big(\Omega,\cF_T,\wP;\hs\big( H^s_0(0,T);L_2(\dom)\big)\big)$. Identifying the spaces $\hs\big( H^s_0(0,T);L_2(\dom)\big)$ and \linebreak$H^{-s}((0,T))\hat\otimes L_2(\dom)$ as described in Appendix~\ref{Tensor products of Sobolev spaces}, we have, in a formal sense,
\[F(u)+G(u)\frac{\dl W(t)}{\dl t}\in L_2\left(\Omega,\cF_T,\wP;H^{-s}((0,T))\hat\otimes L_2(\dom)\right).\]
\end{rem}

\begin{rem}
For the sake of clarity we have distinguished notationally between the evaluation $X(\phi)(\omega)$ of the $L_2(\Omega,\cF_T,\wP;L_2(\dom))$-valued operator $X$ and the evaluation $\widetilde X(\omega)(\phi)$ of the operator-valued random variable $\widetilde X$ in the formulation and the proof of  Lemma \ref{X(phi)Modification}.
In what follows it will be more convenient to write $\widetilde X(\omega,\phi)$ instead of $\widetilde X(\omega)(\phi)$ and to denote the mapping $\omega\mapsto\widetilde X(\omega,\phi)$ by $\widetilde X(\,\cdot\,,\phi)$ or $\widetilde X(\phi)$ instead of $\widetilde X(\,\cdot\,)(\phi)$. This is coherent with our notation for evaluations of the solution $u\in L_2(\Omega,\cF_T,\wP;L_2(0,T)\hat\otimes H^1_0(\dom))$ at $\omega\in\Omega$ and test functions $\phi\in L_2(0,T)$, $\varphi\in H^{-1}(\dom)$ introduced in Subsection \ref{The solution process as a tensor product-valued random variable}. We will use similar notation for all other operator-valued random variables to appear below and explain its precise meaning whenever need be.
\end{rem}

Lemma \ref{X(phi)Modification} implies in particular the existence of a holomorphic modification of $(H(z))_{z\in\C}$.

\begin{lem}\label{Gholom}
The $L_2(\dom)$-valued random field
$(H(z))_{z\in\C}$ defined by \eqref{H(z)}
has a holomorphic modification.
\end{lem}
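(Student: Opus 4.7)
The plan is to combine pathwise-holomorphicity of the deterministic terms in $H(z)$ with a clever rewriting of the two integral terms through the operator-valued random variable $\widetilde X$ furnished by Lemma~\ref{X(phi)Modification}. For $z\in\C$ define $\phi_z\in C^\infty([0,T])$ by $\phi_z(t):=e^{-zt}$. Then, by \eqref{X(phi)},
\[
X(\phi_z)=\int_0^T e^{-zt}F(u(t))\,dt+\int_0^T e^{-zt}G(u(t))\,dW(t),
\]
so that the deterministic representation $H(z)=X(\phi_z)-e^{-zT}u(T)+u_0$ holds as an equality in $L_2(\Omega,\cF_T,\wP;L_2(\dom))$.

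Pick any $s>1/2$ and let $\widetilde X\in L_2(\Omega,\cF_T,\wP;\hs(H^s(0,T);L_2(\dom)))$ be the object from Lemma~\ref{X(phi)Modification}; redefine $\widetilde X(\omega,\cdot):=0$ on the $\wP$-null set where it is not an element of $\hs(H^s(0,T);L_2(\dom))$. Define
\[
\widetilde H(\omega,z):=\widetilde X(\omega,\phi_z)-e^{-zT}u(\omega,T)+u_0(\omega),\qquad \omega\in\Omega,\;z\in\C.
\]
Because \eqref{eqX(phi)Modification} yields $X(\phi_z)(\omega)=\widetilde X(\omega,\phi_z)$ $\wP$-almost surely for each fixed $z$, we have $\widetilde H(\,\cdot\,,z)=H(z)$ in $L_2(\Omega,\cF_T,\wP;L_2(\dom))$ for every $z\in\C$, so $\widetilde H$ is indeed a modification of $(H(z))_{z\in\C}$.

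It remains to show that $z\mapsto\widetilde H(\omega,z)$ is holomorphic on $\C$ for every $\omega\in\Omega$. The term $-e^{-zT}u(\omega,T)+u_0(\omega)$ is entire in $z$ (the coefficient lives in the fixed space $L_2(\dom)$). For the remaining part, the key observation is that the map
\[
\C\ni z\longmapsto \phi_z\in H^s(0,T)
\]
is entire: a direct computation shows $(\phi_{z+h}-\phi_z)/h\to -t\phi_z$ in $C^\infty([0,T])$ as $h\to 0$, hence in $H^s(0,T)$, and the same argument gives holomorphicity at every point of $\C$. Since $\widetilde X(\omega,\cdot)\colon H^s(0,T)\to L_2(\dom)$ is a bounded linear operator for every $\omega\in\Omega$, the composition $z\mapsto \widetilde X(\omega,\phi_z)$ is a holomorphic $L_2(\dom)$-valued function of $z$. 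Adding the entire deterministic piece gives pathwise holomorphicity of $\widetilde H$, completing the proof.

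The only subtlety is the standard one for random fields defined by stochastic integrals: the $\wP$-null exceptional set in \eqref{X(phi)} depends on $z$, so one cannot directly fix a holomorphic version $\omega$-by-$\omega$ from the definition of $H(z)$ alone. Lemma~\ref{X(phi)Modification} is tailor-made to remove this obstacle, by providing a single pathwise bounded operator $\widetilde X(\omega,\cdot)$ into which all test functions $\phi_z$ can be plugged simultaneously.
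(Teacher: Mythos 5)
Your proposal is correct and follows essentially the same route as the paper: both proofs invoke Lemma~\ref{X(phi)Modification} to obtain the pathwise bounded operator $\widetilde X(\omega,\cdot)$, verify via the difference quotient that $z\mapsto e^{-z(\cdot)}|_{(0,T)}$ is an entire $H^s(0,T)$-valued map, and conclude by composing with $\widetilde X(\omega,\cdot)$ and adding the entire term $-e^{-zT}u(T)+u_0$. No gaps.
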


\begin{proof}
For fixed $z,w\in\C$ consider
the function
\begin{equation}\label{holom}
(0,T)\ni t\mapsto\frac{e^{-(z+w)t}-e^{-zt}}{w}\in\C
\end{equation}
By a straightforward calculation, as $w\to0$, this function converges in $C^1((0,T))\hookrightarrow  H^s(0,T)$ to the function
$(0,T)\ni t\mapsto -te^{-zt}\in\C$.
By Lemma \ref{X(phi)Modification} we know that $\big(\widetilde X\big(e^{-z(\cdot)}|_{(0,T)}\big)\big)_{z\in\C}$ is a modification of 
\begin{align*}
\big(X\big(e^{-z(\cdot)}&|_{(0,T)}\big)\big)_{z\in\C}
=\bigg(\int_0^Te^{-zt}F\big(u(t)\big)\dl t+\int_0^Te^{-zt}G\big(u(t)\big)\,\mathrm dW(t)\bigg)_{z\in\C}
\end{align*}
and that the mapping $ H^s(0,T)\ni\phi\mapsto \widetilde X(\omega,\phi)\in L_2(\dom)$ is linear and continuous for all $\omega\in\Omega$. Here $e^{-z(\cdot)}|_{(0,T)}$ denotes the function $(0,T)\ni t\mapsto e^{-zt}\in\C$.
It follows that the mapping $\C\ni z\mapsto \widetilde X(\omega,e^{-z(\cdot)}|_{(0,T)})\in L_2(\dom)$ is holomorphic for all $\omega\in\Omega$. Consequently, a holomorphic modification of $(G(z))_{z\in\C}$ is given by $\big(\widetilde X\big(e^{-z(\cdot)}|_{(0,T)}\big)-e^{-zT}u(T)+u_0\big)_{z\in\C}$ for arbitrary fixed versions of $u_0$, $u(T)\in L_2(\Omega;L_2(\dom))$.
\end{proof}

For $\omega$-wise argumentations concerning the random field $(H(z))_{z\in\C}$ we always refer to a fixed holomorphic modification from now on.

\begin{rem}
Alternatively to using Lemma~\ref{X(phi)Modification}, one can prove Lemma~\ref{Gholom} with the help of It\^o's formula, which implies the equality
\begin{equation}\label{holom2}
\begin{aligned}
\int_0^Te^{-zt}G\big(u(s)\big)\dl W(t)
=e^{-zT}\int_0^TG\big(u(s)\big)\dl W(t)+z\int_0^Te^{-zt}\Big(\int_0^tG\big(u(s)\big)\dl W(s)\Big)\dl t
\end{aligned}
\end{equation}
holding $\wP$-almost surely for every fixed $z\in\C$. Here one takes a continuous version of the $L_2(\dom)$-valued process $\big(\int_0^tG(u(s))\dl W(s)\big)_{t\in[0,T]}$. It can be shown that the right hand side of \eqref{holom2} defines a holomorphic modification of $\big(\int_0^Te^{-zt}G(u(s))\dl W(t)\big)_{z\in\C}$.
\end{rem}

\subsection{Decomposition of the transformed equation}
\label{Decomposition of the transformed equation}

The results of Subsection~\ref{Laplace transform of the stochastic heat equation} imply that there exists $\Omega_0\in\cF_T$ with $\wP(\Omega_0)=1$ such that, for all $\omega\in\Omega_0$ and all $z\in\C$, $U(\omega,z)=\int_0^Te^{-zt}u(\omega,t)\dl t\in D(\Lap)$ satisfies
\begin{equation}\label{stoHelmholtz}
-\Delta U(\omega,z)+zU(\omega,z)=H(\omega,z).
\end{equation}
Here and in the sequel we write $U(\omega,z)$ and $H(\omega,z)$ instead of $U(z)(\omega)$ and $H(z)(\omega).$ 

We apply the results of Subsection~\ref{Estimates for the Helmholtz equation}. For $\omega\in\Omega$ and $z\in\C\setminus\sigma(\Lap)$ define
\begin{equation}\label{c(omega,z)}
\begin{aligned}
c(\omega,z)&=\big\langle H(\omega,z),\overline{v(z)}\big\rangle_{L_2(\dom)}\\
&=\left\langle\left[\id{L_2(\dom)}-z\big(z\id{L_2(\dom)}-\Lap\big)^{-1}\right] H(\omega,z),v_0\right\rangle_{L_2(\dom)},
\end{aligned}
\end{equation}
where $v(z)\in L_2(\dom)$ and $v_0=v(0)\in L_2(\dom;\R)$ are as in \eqref{v1}; compare \eqref{c(z)}. Also, for $\omega\in\Omega$ and $z\in\C\setminus(-\infty,0)$ we set
\begin{equation}\label{UR}
U_{\mathrm R}(\omega,z):=\big(z\id{L_2(\dom)}-\Lap\big)^{-1}H(\omega,z)-c(\omega,z)
e^{-r\sqrt z}S,
\end{equation}
compare \eqref{HelmholtzEq2}. Then $U_{\mathrm R}(\omega,z)\in H^2(\dom)\cap H^1_0(\dom)$ for all $\omega\in\Omega$, and for all $\omega\in\Omega_0$ and $z\in\C\setminus(-\infty,0)$ we have a decomposition of $U(\omega,z)$ of the type \eqref{GriDecomp},
\begin{equation}\label{StoGriDecomp}
U(\omega,z)=U_{\mathrm R}(\omega,z)+c(\omega,z)e^{-r\sqrt{z}}S.
\end{equation}

Let us collect some properties of $c(\omega,z)$ and $U_{\mathrm R}(\omega,z)$.
\begin{lem}\label{cURholom}
{\bf (i)} 
For $\omega\in\Omega$ and $z\in\C\setminus D(\Lap)$, let $c(\omega,z)\in\C$ be defined by \eqref{c(omega,z)}. The mappings
\[c(z):\Omega\to\C,\;\omega\mapsto c(z)(\omega):=c(\omega,z),\qquad z\in\C\setminus D(\Lap),\]
are $\cF_T/\cB(\C)$-measurable. All trajectories
\[c(\omega,\cdot):\C\setminus D(\Lap)\to\C,\;z\mapsto c(\omega,z),\qquad \omega\in\Omega,\]
are holomorphic.

{\bf (ii)}
For $\omega\in\Omega$ and $z\in\C\setminus(-\infty,0)$, let $U_{\mathrm R}(\omega,z)\in H^2(\dom)\cap H^1_0(\dom)$ be defined by \eqref{UR}. The mappings
\[U_{\mathrm R}(z):\Omega\to H^2(\dom),\;\omega\mapsto U_{\mathrm R}(z)(\omega):=U_{\mathrm R}(\omega,z),\qquad z\in\C\setminus(-\infty,0),\]
are $\cF_T/\cB\big(H^2(\dom)\big)$-measurable. All trajectories
\[U_{\mathrm R}(\omega,\cdot):\C\setminus(-\infty,0)\to H^2(\dom),\;z\mapsto U_{\mathrm R}(\omega,z),\qquad \omega\in\Omega,\]
are continuous on $\C\setminus(-\infty,0)$ and holomorphic on $\C\setminus(-\infty,0]$.
\end{lem}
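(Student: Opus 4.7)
For part~(i), measurability is immediate: for each fixed $z\in\C\setminus\sigma(\Lap)$, the functional $L_2(\dom)\ni g\mapsto\langle g,\overline{v(z)}\rangle_{L_2(\dom)}$ is bounded linear, hence Borel, and composition with the $\cF_T/\cB(L_2(\dom))$-measurable random variable $H(z)$ (the holomorphic modification of Lemma~\ref{Gholom} is in particular Bochner measurable) gives $\cF_T/\cB(\C)$-measurability of $c(z)$. For the $\omega$-wise holomorphy, the resolvent $z\mapsto(z\id{L_2(\dom)}-\Lap)^{-1}$ is $\bo(L_2(\dom))$-holomorphic on $\C\setminus\sigma(\Lap)$, so $v(z)=v_0-z(z\id{L_2(\dom)}-\Lap)^{-1}v_0$ is $L_2(\dom)$-holomorphic there; by the choice of modification, $z\mapsto H(\omega,z)$ is $L_2(\dom)$-holomorphic on $\C$; and the continuous bilinear pairing $(f,g)\mapsto\int_\dom fg\,dx$ transfers this to give holomorphy of $c(\omega,z)=\int_\dom H(\omega,z)\,v(z)\,dx$.

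For part~(ii), measurability follows by writing $U_{\mathrm R}(z)=T(z)H(z)$ with
\[T(z)g:=\bigl(z\id{L_2(\dom)}-\Lap\bigr)^{-1}g-\langle g,\overline{v(z)}\rangle_{L_2(\dom)}\,e^{-r\sqrt z}S,\qquad g\in L_2(\dom),\;z\in\C\setminus(-\infty,0).\]
Theorem~\ref{GriEllEst} says $T(z)\colon L_2(\dom)\to H^2(\dom)\cap H^1_0(\dom)$ is bounded, hence Borel, so composition with the $L_2(\dom)$-valued $H(z)$ gives the desired $\cF_T/\cB(H^2(\dom))$-measurability. For the trajectory regularity in $z$, the plan is to employ the alternative splitting
\[U_{\mathrm R}(\omega,z)=\bigl[(z\id{L_2(\dom)}-\Lap)^{-1}H(\omega,z)-c(\omega,z)S\bigr]+c(\omega,z)\bigl[S-e^{-r\sqrt z}S\bigr].\]
The first bracket equals $P\bigl[(\id{L_2(\dom)}-z(z\id{L_2(\dom)}-\Lap)^{-1})H(\omega,z)\bigr]$, where $P\colon L_2(\dom)\to H^2(\dom)\cap H^1_0(\dom)$ is the bounded regular-part operator for the pure Poisson problem~\eqref{HelmholtzEq1}; together with the $\bo(L_2)$-holomorphy of the resolvent and part~(i), this makes the first summand $H^2(\dom)$-holomorphic on $\C\setminus\sigma(\Lap)$.

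Since $\sigma(\Lap)\subset(-\infty,0)$, matters reduce to establishing the $H^2(\dom)$-continuity of $B(z):=S-e^{-r\sqrt z}S$ on $\C\setminus(-\infty,0)$ and its $H^2(\dom)$-holomorphy on $\C\setminus(-\infty,0]$. This is the main technical obstacle: neither $S$ nor $e^{-r\sqrt z}S$ lies in $H^2(\dom)$, so the singular leading term $r^\alpha\sin(\alpha\theta)$ must cancel in a way that is uniformly controlled, including in a $z$-differentiated sense. The plan is to note that $B(z)\in H^2(\dom)\cap H^1_0(\dom)$ solves the Helmholtz equation $(-\Delta+z)B(z)=g(z)$ with a right-hand side equal, modulo an $\eta$-derivative remainder that is smooth in $z$, to $g(z)=zS-(2\alpha+1)\sqrt z\,\eta\,r^{\alpha-1}\sin(\alpha\theta)\,e^{-r\sqrt z}\in L_2(\dom)$. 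Since $z\mapsto\sqrt z$ is continuous on $\C\setminus(-\infty,0)$ and holomorphic on $\C\setminus(-\infty,0]$, so is $z\mapsto g(z)$ as an $L_2(\dom)$-valued map. The difference $B(z_1)-B(z_2)\in H^2\cap H^1_0$ solves $(-\Delta+z_1)(B(z_1)-B(z_2))=g(z_1)-g(z_2)-(z_1-z_2)B(z_2)$; the uniqueness of the Grisvard decomposition in Theorem~\ref{GriEllEst} forces the singular coefficient for this difference to vanish and yields
\[\|B(z_1)-B(z_2)\|_{H^2(\dom)}\leq C\bigl(\|g(z_1)-g(z_2)\|_{L_2(\dom)}+|z_1-z_2|\,\|B(z_2)\|_{L_2(\dom)}\bigr),\]
giving continuity. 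Applying the same reasoning to the difference quotient $(z_1-z_2)^{-1}(B(z_1)-B(z_2))$ yields $H^2$-holomorphy on $\C\setminus(-\infty,0]$, and assembling the three pieces completes the proof.
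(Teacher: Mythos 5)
Your part~(i) is essentially the paper's own argument (bounded linear pairing with the measurable random variable $H(z)$, plus holomorphy of the resolvent and of the fixed modification $z\mapsto H(\omega,z)$), and your measurability argument in part~(ii) is also in the spirit of the paper. For the trajectory regularity in part~(ii), however, you take a genuinely different and substantially heavier route. The paper's proof rests on the a priori estimate $\nnrm{w}{H^2(\dom)}\leq C\nnrm{\Delta w}{L_2(\dom)}$ for $w\in H^2(\dom)\cap H^1_0(\dom)$ (cf.\ \cite[Theorem~2.2.3]{Gri92}), which reduces $H^2$-continuity/holomorphy of $z\mapsto U_{\mathrm R}(\omega,z)$ to $L_2$-continuity/holomorphy of
$\Delta U_{\mathrm R}(\omega,z)=\big[z\big(z\id{L_2(\dom)}-\Lap\big)^{-1}-\id{L_2(\dom)}\big]H(\omega,z)-c(\omega,z)\Delta\big(e^{-r\sqrt z}S\big)$;
since each summand separately lies in $L_2(\dom)$, the cancellation of the $r^\alpha$-singularity that you must track never arises, and the $z$-dependence of $\Delta(e^{-r\sqrt z}S)$ is handled by a direct computation and dominated convergence. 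Your alternative — re-splitting $U_{\mathrm R}$ as the Poisson regular part $P\tilde g(z)$ plus $c(\omega,z)B(z)$ with $B(z)=S-e^{-r\sqrt z}S$, and controlling $B$ in $H^2(\dom)$ by viewing $B(z_1)-B(z_2)$ as the purely regular solution of a Helmholtz problem with parameter $z_1$ — is correct: your formula for $(-\Delta+z)B(z)$ checks out modulo the $\eta$-derivative remainder, the vanishing of the singular coefficient of $B(z_1)-B(z_2)$ is indeed forced by uniqueness because the difference already lies in $H^2(\dom)\cap H^1_0(\dom)$, and Theorem~\ref{GriEllEst} (with $\theta_0$ chosen locally) yields the continuity estimate. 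What this buys is an argument that never leaves the Grisvard decomposition machinery; what it costs is the extra bookkeeping and the holomorphy step, which as written is only a sketch: to get $H^2$-convergence of the difference quotients you should exhibit the candidate derivative $\tfrac{r}{2\sqrt z}e^{-r\sqrt z}S\in H^2(\dom)\cap H^1_0(\dom)$ and apply your estimate to the difference quotient minus this candidate (or argue via weak holomorphy plus local boundedness). The paper's reduction to $\Delta U_{\mathrm R}$ avoids all of this.
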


\begin{proof} 
{\bf (i)}
The measurability property is obvious. The holomorphy property follows from the holomorphy of the resolvent map $\C\setminus\sigma(\Lap)\ni z\mapsto\big(z\id{L_2(\dom)}-\Lap\big)^{-1}\in \bo\big(L_2(\dom)\big)$ and the holomorpy of $\C\ni z\mapsto H(\omega,z)\in L_2(\dom)$ for all $\omega\in\Omega$.

{\bf (ii)}
Since $\nnrm{w}{H^2(\dom)}\leq C\nnrm{\Delta w}{L_2(\dom)}$ for all $w\in H^2(\dom)\cap H^1_0(\dom)$ with a constant $C>0$ that does not depend $w$ (see, e.g., \cite[Theorem~2.2.3]{Gri92}), it suffices to verify the assertions with $U_{\mathrm R}(\omega,z)$ replaced by $\Delta U_{\mathrm R}(\omega,z)$ and with $H^2(\dom)$ replaced by $L_2(\dom)$. For all $\omega\in\Omega$ and $z\in\C\setminus(-\infty,0)$,
\begin{align*}
\Delta U_{\mathrm R}(\omega,z)
&=\Delta \big(z\id{L_2(\dom)}-\Lap\big)^{-1}H(\omega,z) - c(\omega,z)\Delta\big(e^{-r\sqrt z}S\big)\\
&=\left[z\big(z\id{L_2(\dom)}-\Lap\big)^{-1}-\id{L_2(\dom)}\right]H(\omega,z)-c(\omega,z)\Delta\big(e^{-r\sqrt z}S\big).
\end{align*}
Now the measurability property is obvious. As in {\bf (i)} one sees that the mapping 
\[\C\setminus\sigma(\Lap)\ni z\mapsto
\left[z\big(z\id{L_2(\dom)}-\Lap\big)^{-1}-\id{L_2(\dom)}\right]H(\omega,z)
\in L_2(\dom)\]
is holomorphic for all $\omega\in\Omega$. A direct calculation and an application of the dominated convergence theorem shows that the mapping
\[\C\setminus(-\infty,0)\ni z\mapsto \Delta\big(e^{-r\sqrt z}S\big)\in L_2(\dom)\]
is continuous on $\C\setminus(-\infty,0)$ and holomorphic on $\C\setminus(-\infty,0]$.
\end{proof}

As a direct consequence of Theorem~\ref{GriEllEst} we have
\begin{lem}\label{StoGriEllEst}
Consider the decomposition $\eqref{StoGriDecomp}$ of $U(\omega,z)=\int_0^Te^{-zt}u(\omega,t)\dl t\in H^1_0(\dom)$, which holds for all $\omega\in\Omega_0$ and all $z\in\C\setminus(-\infty,0)$, where $\wP(\Omega_0)=1$. Given any angle $\theta_0\in (0,\pi)$, there exists a constant $C>0$, depending only on $\theta_0$, $\dom$ and the cut-off function $\eta$ in \eqref{defS}, such that
\[
\gnnrm{U_{\mathrm R}(\omega,z)}{H^2(\dom)}+\big(1+|z|\big)^{(1-\alpha)/2}\big|c(\omega,z)\big| \leq C\gnnrm{H(\omega,z)}{L_2(\dom)}
\]
for all $\omega\in\Omega_0$ and $z\in\C$ with $|\arg z| \leq\theta_0$.
In particular,
\[
\gnnrm{U_{\mathrm R}(z)}{L_2(\Omega;H^2(\dom))}+\big(1+|z|\big)^{(1-\alpha)/2}\gnnrm{c(z)}{L_2(\Omega;\C)}
\leq C\gnnrm{H(z)}{L_2(\Omega;L_2(\dom))}
\]
for all $z\in\C$ with $|\arg z| \leq\theta_0$.
\end{lem}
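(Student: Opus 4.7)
The strategy is to apply Theorem~\ref{GriEllEst} $\omega$-wise and then integrate. Fix $\omega\in\Omega_0$ and $z\in\C$ with $|\arg z|\leq\theta_0$. By Lemma~\ref{StoGriEllEqnLem} (strengthened to hold simultaneously for all $z\in\C$ on $\Omega_0$ thanks to our fixed holomorphic modification of $(H(z))_{z\in\C}$), the function $U(\omega,z)=\int_0^T e^{-zt}u(\omega,t)\dl t\in H^1_0(\dom)$ solves the deterministic Helmholtz equation
\[
-\Delta U(\omega,z) + z U(\omega,z) = H(\omega,z).
\]
By \eqref{c(omega,z)}, \eqref{UR} and \eqref{c(z)}, \eqref{HelmholtzEq2}, the decomposition \eqref{StoGriDecomp} is precisely the decomposition \eqref{GriDecomp} of Subsection~\ref{Estimates for the Helmholtz equation} applied to $g:=H(\omega,z)$, $w:=U(\omega,z)$, $w_{\mathrm R}:=U_{\mathrm R}(\omega,z)$, $c:=c(\omega,z)$. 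Thus Theorem~\ref{GriEllEst} applies verbatim and yields the pointwise estimate
\[
\gnnrm{U_{\mathrm R}(\omega,z)}{H^2(\dom)}+\big(1+|z|\big)^{(1-\alpha)/2}\big|c(\omega,z)\big| \leq C\gnnrm{H(\omega,z)}{L_2(\dom)},
\]
with the same constant $C$ provided by Theorem~\ref{GriEllEst} (depending only on $\theta_0$, $\dom$ and $\eta$).

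For the $L_2(\Omega)$-version, I square the pointwise inequality and use $(a+b)^2\geq a^2+b^2$ on the left to obtain
\[
\gnnrm{U_{\mathrm R}(\omega,z)}{H^2(\dom)}^2+\big(1+|z|\big)^{1-\alpha}\big|c(\omega,z)\big|^2 \leq C^2\gnnrm{H(\omega,z)}{L_2(\dom)}^2.
\]
Taking expectations (the measurability of $\omega\mapsto U_{\mathrm R}(\omega,z)$ in $H^2(\dom)$ and of $\omega\mapsto c(\omega,z)$ in $\C$ was established in Lemma~\ref{cURholom}) gives
\[
\gnnrm{U_{\mathrm R}(z)}{L_2(\Omega;H^2(\dom))}^2+\big(1+|z|\big)^{1-\alpha}\gnnrm{c(z)}{L_2(\Omega;\C)}^2 \leq C^2\gnnrm{H(z)}{L_2(\Omega;L_2(\dom))}^2.
\]
Taking square roots and using $\sqrt{a^2+b^2}\geq (a+b)/\sqrt{2}$ yields the stated estimate with $C$ replaced by $\sqrt{2}\,C$ (still depending only on $\theta_0$, $\dom$ and $\eta$).

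There is essentially no obstacle here beyond bookkeeping: the content of the lemma is the transfer of Grisvard's deterministic Helmholtz estimate to the $\omega$-dependent data $H(\omega,z)$, which is allowed because the decomposition \eqref{StoGriDecomp} was defined via the same formulas used in Subsection~\ref{Estimates for the Helmholtz equation} and because the constant in Theorem~\ref{GriEllEst} is uniform in $g$. The only care needed is to invoke the full-probability event $\Omega_0$ on which \eqref{stoHelmholtz} holds for all $z\in\C$ simultaneously, so that the pointwise estimate is meaningful for every $z$ in the sector $|\arg z|\leq\theta_0$.
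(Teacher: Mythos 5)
Your proof is correct and takes exactly the route the paper intends: the paper states this lemma with no written proof beyond the phrase "As a direct consequence of Theorem~\ref{GriEllEst}", and your $\omega$-wise application of the deterministic Helmholtz estimate followed by squaring, integrating over $\Omega$, and adjusting the constant by $\sqrt{2}$ supplies precisely the routine details being omitted. The measurability references to Lemma~\ref{cURholom} and the care about the full-measure event $\Omega_0$ are appropriate.
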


In combination with Lemma~\ref{StoGriEllEst}, the following result will enable us to derive assertions concerning the Sobolev regularity and the supports of the inverse Fourier transforms of $\xi\mapsto U_{\mathrm R}(\omega,i\xi)$ and $\xi\mapsto c(\omega,i\xi)$. 

\begin{lem}\label{HIntAbschLem}
Let $s>1/2$ and let 
\[\widetilde X\in L_2\big(\Omega,\cF_T,\wP;\hs(H^s(0,T);L_2(\dom))\big)\] 
be as in Lemma~\ref{X(phi)Modification}. There exists a constant $C>0$, depending only on $s$, such that, $\wP$-almost surely,
\begin{equation}\label{HIntAbschLemEq1}
\begin{aligned}
\int_\R\big(1+\xi^2\big)^{-s}&\gnnrm{H(i\xi)}{L_2(\dom)}^2\dl\xi\\
&\leq C\left(
\gnnrm{\widetilde X}{\hs(H^s(0,T);L_2(\dom))}^2
+\gnnrm{u(T)}{L_2(\dom)}^2+\gnnrm{u_0}{L_2(\dom)}^2
\right),
\end{aligned}
\end{equation}
and
\begin{equation}\label{HIntAbschLemEq2}
\begin{aligned}
\gnnrm{H(z)}{L_2(\dom)}
&\;\leq \;
e^{|\Re z|T\one_{(-\infty,0]}(\Re z)}
\gnnrm{\widetilde X}{\hs(H^s(0,T);L_2(\dom))}(1+|z|)\\
&\qquad+e^{|\Re z|T\one_{(-\infty,0]}(\Re z)}\gnnrm{u(T)}{L_2(\dom)}+\gnnrm{u_0}{L_2(\dom)},\qquad z\in\C.
\end{aligned}
\end{equation}
\end{lem}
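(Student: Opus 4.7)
The strategy is to use the representation of the holomorphic modification of $H$ fixed in the proof of Lemma~\ref{Gholom}: for all $\omega\in\Omega$ and $z\in\C$,
\[
H(\omega,z)\;=\;\widetilde X\bigl(\omega,\,e^{-z(\cdot)}|_{(0,T)}\bigr)\;-\;e^{-zT}u(T)(\omega)\;+\;u_0(\omega),
\]
where $\widetilde X$ is the operator-valued random variable from Lemma~\ref{X(phi)Modification}. Both estimates then reduce to pathwise bounds on $\widetilde X(\omega,e^{-z(\cdot)}|_{(0,T)})$.

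For the pointwise estimate \eqref{HIntAbschLemEq2}, the triangle inequality together with the operator-norm bound $\|\widetilde X(\omega,\phi)\|_{L_2(\dom)}\leq\|\widetilde X(\omega)\|_{\hs(H^s(0,T);L_2(\dom))}\|\phi\|_{H^s(0,T)}$ reduces matters to estimating $\|e^{-z(\cdot)}|_{(0,T)}\|_{H^s(0,T)}$. The direct computation
\[
\gnnrm{e^{-z(\cdot)}}{H^1(0,T)}^{\!\!2}=(1+|z|^2)\int_0^T e^{-2(\Re z)t}\dl t\leq T(1+|z|)^2 e^{2|\Re z|T\one_{(-\infty,0]}(\Re z)}
\]
combined with $\|\cdot\|_{H^s(0,T)}\leq\|\cdot\|_{H^1(0,T)}$ (for $s\leq1$; for larger $s$ one passes to $H^{\lceil s\rceil}$) yields the required factor $(1+|z|)\,e^{|\Re z|T\one_{(-\infty,0]}(\Re z)}$, while $|e^{-zT}|\leq e^{|\Re z|T\one_{(-\infty,0]}(\Re z)}$ handles the $u(T)$-term.

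The integral bound \eqref{HIntAbschLemEq1} is the delicate part, since a na\"ive pointwise use of \eqref{HIntAbschLemEq2} produces an integrand $\sim(1+|\xi|)^{2-2s}$ which is not integrable for $s\leq 3/2$. The key idea is to realize
\[
\int_\R (1+\xi^2)^{-s}\gnnrm{\widetilde X\bigl(\omega,e^{-i\xi(\cdot)}|_{(0,T)}\bigr)}{L_2(\dom)}^{\!\!2}\dl\xi
\]
as the squared Hilbert--Schmidt norm of an operator composition. I would introduce $B\colon L_2(\R)\to H^s(0,T)$ defined by $B\alpha:=\cF[\alpha(\xi)(1+\xi^2)^{-s/2}]|_{(0,T)}$; the Fourier characterization of $H^s(\R)$ and boundedness of the restriction $H^s(\R)\to H^s(0,T)$ show that $B$ is bounded with $\|B\|_{\bo}\leq C(s)$. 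Since $\hs$ is a two-sided operator ideal, $\widetilde X(\omega)\circ B\colon L_2(\R)\to L_2(\dom)$ is Hilbert--Schmidt with $\|\widetilde X(\omega)\circ B\|_{\hs}\leq\|B\|_{\bo}\|\widetilde X(\omega)\|_{\hs}$. For $\alpha\in C_c^\infty(\R)$ one can interpret $B\alpha$ as the absolutely convergent Bochner integral $\int_\R\alpha(\xi)(1+\xi^2)^{-s/2}e^{-i\xi(\cdot)}|_{(0,T)}\dl\xi$ in $H^s(0,T)$, so by linearity and continuity of $\widetilde X(\omega)$ one obtains, on a dense set of $\alpha$'s,
\[
(\widetilde X(\omega)B\alpha)(x)=\int_\R\alpha(\xi)\bigl[(1+\xi^2)^{-s/2}\widetilde X(\omega,e^{-i\xi(\cdot)}|_{(0,T)})\bigr](x)\dl\xi.
\]
Uniqueness of the Hilbert--Schmidt kernel then identifies this with the global $L_2(\R\times\dom)$ kernel of $\widetilde X(\omega)\circ B$, yielding
\[
\|\widetilde X(\omega)B\|_{\hs}^2=\int_\R(1+\xi^2)^{-s}\gnnrm{\widetilde X(\omega,e^{-i\xi(\cdot)}|_{(0,T)})}{L_2(\dom)}^{\!\!2}\dl\xi,
\]
which combined with the operator-ideal bound gives the $\widetilde X$-contribution in \eqref{HIntAbschLemEq1}; the contributions of $u_0$ and $u(T)$ are trivial because $(1+\xi^2)^{-s}\in L_1(\R)$ for $s>1/2$.

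The main technical obstacle will be the kernel-identification step: one must verify that the explicit integral representation of $B\alpha$ on the dense subset $C_c^\infty(\R)$ passes through $\widetilde X(\omega)$ and, via uniqueness of Hilbert--Schmidt kernels, pins down the $L_2(\R\times\dom)$ kernel of $\widetilde X(\omega)\circ B$ globally, so that its HS norm squared equals the weighted integral we wish to bound.
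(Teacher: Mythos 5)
Your proposal is correct and, for the key inequality, takes a recognizably different (though closely related) route from the paper. The skeleton is the same: both start from $H(\omega,z)=\widetilde X\bigl(\omega,e^{-z(\cdot)}|_{(0,T)}\bigr)-e^{-zT}u(\omega,T)+u_0(\omega)$, both dispose of the $u_0$- and $u(T)$-terms using $(1+\xi^2)^{-s}\in L_1(\R)$, and both prove \eqref{HIntAbschLemEq2} by bounding $\gnnrm{e^{-z(\cdot)}|_{(0,T)}}{H^s(0,T)}$ (the paper via $C^1\hookrightarrow H^s(0,T)$, you via $H^1$; in both cases an embedding constant is silently absorbed, which is immaterial since only polynomial boundedness in $z$ is used downstream). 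The divergence is in how one shows $\int_\R(1+\xi^2)^{-s}\gnnrm{\widetilde X(\omega,e^{-i\xi(\cdot)}|_{(0,T)})}{L_2(\dom)}^2\dl\xi\leq C\gnnrm{\widetilde X(\omega)}{\hs(H^s(0,T);L_2(\dom))}^2$: the paper extends $\widetilde X$ by restriction to $\widetilde X_{\mathrm{ext}}\in\hs(H^s(\R);L_2(\dom))$, recognizes $\xi\mapsto\langle\widetilde X_{\mathrm{ext}}(\lambda e^{-i\xi(\cdot)}),\varphi_k\rangle_{L_2(\dom)}$ as the Fourier transform of a compactly supported distribution $\widetilde X_k\in H^{-s}(\R)$, and sums $\gnnrm{\widetilde X_k}{H^{-s}(\R)}^2=\gnnrm{\widetilde X_{\mathrm{ext}}^*\varphi_k}{H^s(\R)}^2$ over $k$; you instead pre-compose with the explicit bounded multiplier-plus-restriction operator $B:L_2(\R)\to H^s(0,T)$, invoke the two-sided ideal property of $\hs$, and evaluate $\gnnrm{\widetilde X(\omega)B}{\hs}$ through its integral kernel. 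These are adjoint formulations of one and the same computation, both resting on \eqref{HsFT} and on $\gnnrm{\phi|_{(0,T)}}{H^s(0,T)}\leq\gnnrm{\phi}{H^s(\R)}$. Your version buys a cleaner operator-theoretic statement at the cost of the kernel-identification step you flag (which does go through: the candidate kernel is locally square integrable by continuity of $\xi\mapsto\widetilde X(\omega,e^{-i\xi(\cdot)}|_{(0,T)})$, and testing against a countable dense family of $\alpha\in C_c^\infty(\R)$ identifies it with the $L_2$ kernel of the Hilbert--Schmidt operator). You can bypass that step entirely by computing $\gnnrm{\widetilde X(\omega)B}{\hs}^2=\sum_k\gnnrm{B^*\widetilde X(\omega)^*\varphi_k}{L_2(\R)}^2$ with $\overline{(B^*\psi)(\xi)}=(1+\xi^2)^{-s/2}\langle e^{-i\xi(\cdot)}|_{(0,T)},\psi\rangle_{H^s(0,T)}$ and summing by monotone convergence; doing so essentially reproduces the paper's argument.
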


\begin{proof}
Fix a version of the random variable $\widetilde X\in L_2(\Omega;\hs( H^s(0,T);L_2(\dom)))$ from Lemma~\ref{X(phi)Modification}
and observe that, for $\wP$-almost all $\omega\in\Omega$, we have
\begin{equation*}
H(\omega,z)=\widetilde X\big(\omega,e^{-z(\cdot)}|_{(0,T)}\big)-e^{-zT}u(\omega,T)+u_0(\omega)\quad\text{ for all }z\in\C,
\end{equation*}
where $e^{-z(\cdot)}|_{(0,T)}$ denotes the function $(0,T)\ni t\mapsto e^{-zt}\in\C$. (Recall that we consider a fixed holomorphic, hence continuous, modification of the $L_2(\dom)$-valued random field $(H(z))_{z\in\C}$.)

Assertion \eqref{HIntAbschLemEq2} follows from
\begin{align*}
\gnnrm{H(z)}{L_2(\dom)}&\leq \gnnrm{\widetilde X\big(e^{-z(\cdot)}|_{(0,T)}\big)}{L_2(\dom)}+\gnnrm{u_0-e^{-zT}u(T)}{L_2(\dom)}\\
&\leq\gnnrm{\widetilde X}{\hs(H^s(0,T);L_2(\dom))}\gnnrm{e^{-z(\cdot)}|_{(0,T)}}{H^s(0,T)}+\gnnrm{u_0-e^{-zT}u(T)}{L_2(\dom)}
\end{align*}
together with
\[\gnnrm{e^{-z(\cdot)}|_{(0,T)}}{H^s(0,T)}\leq T\gnnrm{e^{-z(\cdot)}|_{(0,T)}}{C^1((0,T))}\leq e^{|\Re z|T\one_{(-\infty,0]}(\Re z)}
(1+|z|)\]
and
\[\gnnrm{u_0-e^{-zT}u(T)}{L_2(\dom)}
\leq \gnnrm{u_0}{L_2(\dom)}+ e^{|\Re z|T\one_{(-\infty,0]}(\Re z)}\gnnrm{u(T)}{L_2(\dom)}.\]

In order to prove \eqref{HIntAbschLemEq1} we write
\begin{equation}\label{GIntAbsch0}
\begin{aligned}
\int_\R\big(1+\xi^2\big)^{-s}\gnnrm{H(i\xi)}{L_2(\dom)}^2\dl\xi
&\leq 2\int_\R\big(1+\xi^2\big)^{-s}\left\|\widetilde X\big(e^{-i\xi(\cdot)}|_{(0,T)}\big)\right\|_{L_2(\dom)}^2\dl\xi\\
&\quad+2\int_\R\big(1+\xi^2\big)^{-s}\gnnrm{u_0-e^{-i\xi T}u(T)}{L_2(\dom)}^2\dl\xi
\end{aligned}
\end{equation}
and estimate each term separately. 
Clearly,
\begin{equation}\label{GIntAbsch4}
\int_\R\big(1+\xi^2\big)^{-s}\gnnrm{u_0-e^{-i\xi T}u(T)}{L_2(\dom)}^2\dl\xi
\leq C\left(\|u_0\|_{L_2(\dom)}+\|u(T)\|_{L_2(\dom)}\right)^2
\end{equation}
where $C=2\int_\R\big(1+\xi^2\big)^{-s}\dl\xi$ is finite since $s>1/2$.

To estimate the first term on the right hand side of the inequality~\eqref{GIntAbsch0}, we extend $\widetilde X\in L_2(\Omega;\hs( H^s(0,T);L_2(\dom))$ to an $\hs(H^s(\R);L_2(\dom))$-valued random-variable by setting
\[\widetilde X_{\text{ext}}(\omega,\phi):=\widetilde X(\omega,\phi|_{(0,T)}),\qquad \omega\in\Omega,\;\phi\in H^s(\R).\]
This definition makes sense since $\|\phi|_{(0,T)}\|_{ H^s(0,T)}\leq \|\phi\|_{H^s(\R)}$ according to Definition~\ref{DefSobolev}, and we have
\begin{equation}\label{WidetildeXExtHS}
\gnnrm{\widetilde X_{\text{ext}}(\omega)}{\hs(H^s(\R);L_2(\dom))}
\leq \gnnrm{\widetilde X(\omega)}{\hs(H^s(0,T);L_2(\dom))},\quad \omega\in\Omega.
\end{equation}
Moreover, for all $\omega\in\Omega$ and $\xi\in\R$,
\begin{equation}\label{WidetildeXExt=}
\widetilde X_{\text{ext}}\big(\omega,\lambda(\cdot) e^{-i\xi(\cdot)}\big)=\widetilde X\big(\omega,e^{-i\xi(\cdot)}|_{(0,T)}\big).
\end{equation}
Here and below $\lambda(\cdot)$ is a $C^\infty_0(\R)$-function which equals one in a neighborhood of $[0,T]$.

Next, let $(\varphi_k)_{k\in\N}$ be an orthonormal basis of $L_2(\dom)$ and, for $k\in\N$, define $\widetilde X_k\in L_2(\Omega,\cF_T,\wP;H^{-s}(\R))$ by
\begin{equation*}
\big\langle\widetilde X_k(\omega),\phi\big\rangle_{ H^{-s}(\R)\times H^{s}(\R)}:=\big\langle\widetilde X_{\text{ext}}(\omega,\phi),\varphi_k\big\rangle_{L_2(\dom)},\quad\omega\in\Omega,\;\phi\in H^s(\R).
\end{equation*}
With the natural embedding of $H^{-s}(\R)$ into $\cS'(\R)$ the identity
$\big\langle\widetilde X_k(\omega),\phi\big\rangle_{H^{-s}(\R)\times H^{s}(\R)}
=\big\langle\widetilde X_k(\omega),\phi\big\rangle_{\cS'(\R)\times\cS(\R)}$ holds for $\phi\in\mathcal S(\R)$. Also, for all $\omega\in\Omega$ and $k\in\N$, we have 
\begin{equation}\label{FTwidetildeXk}
\big[\cF_{t\to\xi}\big(\widetilde X_k(\omega)\big)\big](\xi)=
\big\langle\widetilde X_k(\omega),\lambda(\cdot)e^{-i\xi(\cdot)}\big\rangle_{\cS'(\R)\times\cS(\R)}\;\text{ for }\lambda\text{-almost all }\xi\in\R,
\end{equation} 
see, e.g., \cite[Theorem~7.4-3]{Zem87}.
Using \eqref{WidetildeXExt=}, Parseval's identity, \eqref{FTwidetildeXk} and the norm equivalence mentioned subsequent to \eqref{HsFT}, we obtain
\begin{equation}\label{GIntAbsch1}
\begin{aligned}
\int_\R\big(1+\xi^2\big)^{-s}\Big\|\widetilde X\big(e^{-i\xi(\cdot)}&|_{(0,T)}\big)\Big\|_{L_2(\dom)}^2\dl\xi\\
&=\int_\R\big(1+\xi^2\big)^{-s}\left\|\widetilde X_{\text{ext}}\left(\lambda(\cdot)e^{-i\xi(\cdot)}\right)\right\|_{L_2(\dom)}^2\dl\xi\\
&=\sum_{k\in\N}\int_\R\big(1+\xi^2\big)^{-s}\left|\left\langle\widetilde X_k,\lambda(\cdot)e^{-i\xi(\cdot)}\right\rangle_{\cS'(\R)\times\cS(\R)}\right|^2\dl\xi\\
&=\sum_{k\in\N}\int_\R\big(1+\xi^2\big)^{-s}\left|\big[\mathcal F_{t\to\xi}\widetilde X_k\big](\xi)\right|^2\dl\xi\\
&\leq C \sum_{k\in\N}\big\|\widetilde X_k\big\|^2_{H^{-s}(\R)}.
\end{aligned}
\end{equation}

For fixed $\omega\in\Omega$, let $\widetilde X_{\text{ext}}^*(\omega)=\widetilde X_{\text{ext}}^*(\omega,\,\cdot\,)\in\bo(L_2(\dom);H^s(\R))$ be the adjoint operator of $\widetilde X_{\text{ext}}(\omega)=\widetilde X_{\text{ext}}(\omega,\,\cdot\,)\in\hs(H^s(\R);L_2(\dom))$ in the Hilbert space sense. Then
\begin{align*}
\big\|\widetilde X_k(\omega)\big\|_{H^{-s}(\R)}
=\sup_{\phi\in H^s(\R)}\big\langle\widetilde X_{\text{ext}}(\omega,\phi),\varphi_k\big\rangle_{L_2(\dom)}
&=\sup_{\phi\in H^s(\R)}\big\langle\phi,\widetilde X^*_{\text{ext}}(\omega,\varphi_k)\big\rangle_{H^s(\R)}\\
&=\big\|\widetilde X^*_{\text{ext}}(\omega,\varphi_k)\big\|_{H^{s}(\R)}
\end{align*}
and
\[\big\|\widetilde X^*_{\text{ext}}(\omega)\big\|_{\hs(L_2(\dom);H^s(\R))}=\big\|\widetilde X_{\text{ext}}(\omega)\big\|_{\hs(H^s(\R);L_2(\dom))}.\]
Together with \eqref{WidetildeXExtHS} and \eqref{GIntAbsch1} this yields
\begin{equation}\label{GIntAbsch3}
\int_\R\big(1+\xi^2\big)^{-s}\Big\|\widetilde X\big(\omega,e^{-i\xi(\cdot)}|_{(0,T)}\big)\Big\|_{L_2(\dom)}^2\dl\xi\leq C \gnnrm{\widetilde X(\omega)}{\hs(H^s(0,T);L_2(\dom))}^2,\;\omega\in\Omega.
\end{equation}

The combination of \eqref{GIntAbsch0}, \eqref{GIntAbsch4} and \eqref{GIntAbsch3} yields \eqref{HIntAbschLemEq1}.
\end{proof}

\subsection{Inverse transform}
\label{Inverse transform}

We are now ready to invert the vector-valued Laplace transform of $u=(u(t))_{t\in[0,T]}$
in terms of the decomposition \eqref{StoGriDecomp},
\begin{equation*}
\begin{aligned}
U(\omega,z)= U_{\mathrm R}(\omega,z) + c(\omega,z)e^{-r\sqrt z}S,\quad z\in\C\setminus(-\infty,0),
\end{aligned}
\end{equation*}
which holds for $\wP$-almost all $\omega\in\Omega$.
It will be convenient to introduce the notation
\begin{equation}\label{M}
M(\omega):=\gnnrm{\widetilde X(\omega)}{\hs(H^s(0,T);L_2(\dom))}^2
+\gnnrm{u(\omega,T)}{L_2(\dom)}^2+\gnnrm{u_0(\omega)}{L_2(\dom)}^2,\quad\omega\in\Omega,
\end{equation}
so that $M\in L_1(\Omega,\cF_T,\wP)$.
Here we consider again an arbitrary fixed version of the random variable $\widetilde X\in L_2(\Omega;\hs( H^s(0,T);L_2(\dom)))$ introduced in Lemma~\ref{X(phi)Modification}.

\subsubsection{Inverse transform of $U_R$}
\label{Inverse transform of U_R}

Lemma~\ref{StoGriEllEst} and Lemma~\ref{HIntAbschLem} imply that, for $\wP$-almost all $\omega\in\Omega$,
\begin{equation}\label{InverseUR1}
\int_\R\big(1+\xi^2\big)^{-s}\gnnrm{U_{\mathrm R}(\omega,i\xi)}{H^2(\dom)}^2\dl\xi\\
\leq CM(\omega)
\end{equation}
and
\begin{equation}\label{InverseUR2}
\gnnrm{U_{\mathrm R}(\omega,z)}{H^2(\dom)}
\leq \sqrt{3M(\omega)}(1+|z|),\quad z\in[0,\infty)+i\R,
\end{equation}
where the constant $C>0$ depends only on $s$, $\dom$ and the cut-off function $\eta$ in \eqref{defS}. For convenience, let us redefine $U_{\mathrm R}(\omega,z):=0\in H^2(\dom)$ for $z\in[0,\infty)+i\R$ and all $\omega\in\Omega$ such that \eqref{InverseUR1} and \eqref{InverseUR2} does not hold.

For $\varphi\in (H^2(\dom))'$ and $\omega\in\Omega$ we define
\begin{equation}\label{DefuR1}
u_{+,\mathrm R}^\varphi(\omega):=\mathcal F^{-1}_{\xi\to t}\big\langle U_{\mathrm R}(\omega,i\,\cdot\,),\varphi\big\rangle_{H^2(\dom)\times (H^2(\dom))'},
\end{equation}
where $\big\langle U_{\mathrm R}(\omega,i\,\cdot\,),\varphi\big\rangle_{H^2(\dom)\times (H^2(\dom))'}$ denotes the function \[\R\ni\xi\mapsto \big\langle U_{\mathrm R}(\omega,i\xi),\varphi\big\rangle_{H^2(\dom)\times (H^2(\dom))'}\in\C.\]
By Theorem~\ref{PaleyWiener}, Lemma~\ref{cURholom}, \eqref{InverseUR1}, \eqref{InverseUR2} and \eqref{HsFT}, we have
$u_{+,\mathrm R}^\varphi(\omega)\in H^{-s}(\R)$ and $\supp u_{+,\mathrm R}^\varphi(\omega)\subset [0,\infty)$. Moreover, for all $\omega\in\Omega$ and all $\varphi\in (H^2(\dom))'$,
\begin{equation*}
\begin{aligned}
\gnnrm{u_{+,\mathrm R}^\varphi(\omega)}{H^{-s}(\R)}^2
&\leq C\int_\R\big(1+\xi^2\big)^{-s}\big|\big\langle U_{\mathrm R}(\omega,i\xi),\varphi\big\rangle_{H^2(\dom)\times (H^2(\dom))'}\big|^2\dl\xi\\
&\leq CM(\omega)
\nnrm{\varphi}{(H^2(\dom))'}^2,
\end{aligned}
\end{equation*}
where $C>0$ depends only on $s$, $\dom$ and $\eta$.

For all $\omega\in\Omega$ the linear and bounded mapping $u_{+,\mathrm R}^{(\,\cdot\,)}(\omega):(H^2(\dom))'\to H^{-s}(\R),\;\varphi\mapsto u_{+,\mathrm R}^\varphi(\omega)$ is a Hilbert-Schmidt operator: If $(\varphi_k)_{k\in\N}$ is an orthonormal basis of $(H^2(\dom))'$, then
\begin{equation}\label{InverseUR3}
\begin{aligned}
\gnnrm{u_{+,\mathrm R}^{(\,\cdot\,)}(\omega)}{\hs((H^2(\dom))';H^{-s}(\R))}^2
&=\sum_{k\in\N}\gnnrm{u_{+,\mathrm R}^{\varphi_k}(\omega)}{H^{-s}(\R)}^2\\
&\leq C\sum_{k\in\N}\int_\R\big(1+\xi^2\big)^{-s}\big|\big\langle U_{\mathrm R}(\omega,i\xi),\varphi_k\big\rangle_{H^2(\dom)\times (H^2(\dom))'}\big|^2\dl\xi\\
&= C \int_\R\big(1+\xi^2\big)^{-s}\gnnrm{U_{\mathrm R}(\omega,i\xi)}{H^2(\dom)}^2\dl\xi\\
&\leq CM(\omega).
\end{aligned}
\end{equation}
We define
\begin{equation}\label{DefuR2}
\begin{aligned}
u_{+,\mathrm R}(\omega)(\phi,\varphi):=u_{+,\mathrm R}(&\omega,\phi,\varphi)
:=\big\langle u_{+,\mathrm R}^\varphi(\omega),\phi\big\rangle_{H^{-s}(\R)\times H^s(\R)},\\
&\,\omega\in\Omega,\;\phi\in H^s(\R),\;\varphi\in(H^2(\dom))',
\end{aligned}
\end{equation} 
i.e., $u_{+,\mathrm R}(\omega)=L^{-1}\big(u_{+,\mathrm R}^{(\,\cdot\,)}(\omega)\big)$, where $L$ is the canonical isomorphism from $H^{-s}(\R)\hat\otimes H^2(\dom)$ to $\hs((H^2(\dom))';H^{-s}(\R))$ described in Appendix~\ref{Tensor products of Sobolev spaces}; compare~\eqref{HSFasHSO}. Thus, for all $\omega\in\Omega$,
\begin{equation}\label{EstuR1}
\gnnrm{u_{+,\mathrm R}(\omega)}{H^{-s}(\R)\hat\otimes H^2(\dom)}^2
\leq C\Big(
\gnnrm{\widetilde X(\omega)}{\hs(H^s(0,T);L_2(\dom))}^2
+\gnnrm{u(\omega,T)}{L_2(\dom)}^2+\gnnrm{u_0(\omega)}{L_2(\dom)}^2
\Big)
\end{equation}
with a constant $C>0$ that depends only on $s$, $\dom$ and the cut-off function $\eta$ in \eqref{defS}.

Let us check that $u_{+,\mathrm R}(\omega)$ belongs to $H^{-s}(\R)\hat\otimes H^1_0(\dom)$ for all $\omega\in\Omega$. For $\omega\in\Omega$, $\phi\in H^s(\R)$ and $\varphi\in H^{-1}(\dom)$, set
\begin{equation*}
\tilde u_{+,\mathrm R}^\varphi(\omega):=\mathcal F^{-1}_{\xi\to t}\big\langle U_{\mathrm R}(\omega,i\,\cdot\,),\varphi\big\rangle_{H^1_0(\dom)\times H^{-1}(\dom)}
\end{equation*}
and
\begin{equation*}
\tilde u_{+,\mathrm R}(\omega)(\phi,\varphi):=\tilde u_{+,\mathrm R}(\omega,\phi,\varphi)
:=\big\langle \tilde u_{+,\mathrm R}^\varphi(\omega),\phi\big\rangle_{H^{-s}(\R)\times H^s(\R)}.
\end{equation*} 
In analogy to the argument above, we obtain
\begin{equation*}
\gnnrm{\tilde u_{+,\mathrm R}(\omega)}{H^{-s}(\R)\hat\otimes H^1_0(\dom)}^2
\leq CM(\omega),\quad \omega\in\Omega.
\end{equation*}
Now let $\j$ be the natural embedding of $H^1_0(\dom)$ into $H^2(\dom)$ and let $\id{H^{-s}(\R)}\hat\otimes\,\j$ be the corresponding embedding of $H^{-s}(\R)\hat\otimes H^1_0(\dom)$ into $H^{-s}(\R)\hat\otimes H^2(\dom)$. Then, the identity $\id{H^{-s}(\R)}\hat\otimes\,\j\,\big(\tilde u_{+,\mathrm R}(\omega)\big)= u_{+,\mathrm R}(\omega)$ holds for all $\omega\in\Omega$, i.e., $u_{+,\mathrm R}(\omega)\in H^{-s}(\R)\hat\otimes H^1_0(\dom)$. Indeed, by Proposition~\ref{TPemb}, 
\[\id{H^{-s}(\R)}\hat\otimes\,\j\,\big(\tilde u_{+,\mathrm R}(\omega)\big)(\phi,\varphi)=\tilde u_{+,\mathrm R}\big(\omega,\phi,\j'(\varphi)\big)=\big\langle \tilde u_{+,\mathrm R}^{\j'(\varphi)}(\omega),\phi\big\rangle_{H^{-s}(\R)\times H^s(\R)}\]
for $\omega\in\Omega$, $\phi\in H^{-s}(\R)$ and $\varphi\in (H^2(\dom))'$, and
\begin{align*}
\tilde u_{+,\mathrm R}^{\j'(\varphi)}(\omega)=\mathcal F^{-1}_{\xi\to t}\big\langle U_{\mathrm R}(\omega,i\,\cdot\,),\j'(\varphi)\big\rangle_{H^1_0(\dom)\times H^{-1}(\dom)}&=\mathcal F^{-1}_{\xi\to t}\big\langle \j(U_{\mathrm R}(\omega,i\,\cdot\,)),\varphi\big\rangle_{H^2(\dom)\times (H^2(\dom))'}\\
&=u_{+,\mathrm R}^\varphi(\omega).
\end{align*}

In order to verify the  $\cF_T/\cB\big(H^{-s}(\R)\hat\otimes H^2(\dom)\big)$-measurability of $u_{+,\mathrm R}$, we note that the continuity of the mapping $\R\ni\xi\mapsto U_{\mathrm R}(\omega,i\xi)\in H^2(\dom)$ for all $\omega\in\Omega$ implies the $\cF_T/\cB\big(L_2\big(\R,(1+\xi^2)^{-s}\dl\xi;H^2(\dom)\big)\big)$-measurability of 
\[\Omega\ni\omega\mapsto U_{\mathrm R}(\omega,i\,\cdot\,)\in L_2\big(\R,(1+\xi^2)^{-s}\dl\xi;H^2(\dom)\big).\]
Now the measurability of $u_{\mathrm R}$ follows from the continuity of the inverse Fourier transform $\cF_{\xi\to t}^{-1}:L_2(\R,(1+\xi^2)^{-s}\dl\xi;\C)\to H^{-s}(\R)$ and the fact that
\[u_{+,\mathrm R}(\omega)=H^{-s}(\R)\hat\otimes H^2(\dom)\text{-}\lim_{N\to\infty}\sum_{j,k=1}^N\Big\langle u_{+,\mathrm R}^{\varphi_k'}(\omega),\phi_j'\Big\rangle_{H^{-s}(\R)\times H^s(\R)}\phi_j\otimes\varphi_k\]
for all $\omega\in\Omega$, where $(\phi_j)_{j\in\N}$ and $(\varphi_k)_{k\in\N}$ are orthonormal bases of $H^{-s}(\R)$ and $H^2(\dom)$ and where $(\phi_j')_{j\in\N}$ and $(\varphi_k')_{k\in\N}$ are the respective dual orthonormal bases of  $H^{s}(\R)$ and $(H^2(\dom))'$.

\subsubsection{Inverse transform of $c\,e^{-r\sqrt z}S$}
\label{Inverse transform of c e^{-r sqrt z}S}

By Lemma~\ref{StoGriEllEst} and Lemma~\ref{HIntAbschLem} we have, for $\wP$-almost all $\omega\in\Omega$,
\begin{equation}\label{Inversec1}
\int_\R\big(1+\xi^2\big)^{(1-\alpha)/2-s}|c(\omega,i\xi)|^2\dl\xi\leq CM(\omega)
\end{equation}
and
\begin{equation}\label{Inversec2}
|c(\omega,z)|\leq \sqrt{3M(\omega)}(1+|z|)^{(1+\alpha)/2},\quad z\in[0,\infty)+i\R,
\end{equation}
where the constant $C>0$ depends only on $s$, $\dom$ and the cut-off function $\eta$ in \eqref{defS}, and where $M(\omega)$ is defined by \eqref{M}. We redefine $c(\omega,z):=0$ for $z\in[0,\infty)+i\R$ and all $\omega\in\Omega$ such that \eqref{Inversec1} and \eqref{Inversec2} does not hold.

We set
\begin{equation}\label{DefPhi}
\Phi(\omega):=\cF_{\xi\to t}^{-1} \big(c(\omega,i\,\cdot\,)\big),\quad  \omega\in\Omega,
\end{equation}
where $c(\omega,i\,\cdot\,)$ denotes the function $\R\ni\xi\to c(\omega,i\xi)\in\C$.
Theorem~\ref{PaleyWiener}, Lemma~\ref{cURholom}, \eqref{Inversec1}, \eqref{Inversec2} and \eqref{HsFT} imply
$\Phi(\omega)\in H^{(1-\alpha)/2-s}(\R)$, 
\begin{equation}\label{suppPhi}
\supp\Phi(\omega)\subset [0,\infty),
\end{equation} 
and
\begin{equation}\label{Inversec3}
\nnrm{\Phi(\omega)}{H^{(1-\alpha)/2-s}(\R)}^2\leq C\Big(
\gnnrm{\widetilde X(\omega)}{\hs(H^s(0,T);L_2(\dom))}^2
+\gnnrm{u(\omega,T)}{L_2(\dom)}^2+\gnnrm{u_0(\omega)}{L_2(\dom)}^2
\Big)
\end{equation}
for all $\omega\in\Omega$,  where $C>0$ depends only on $s$, $\dom$ and $\eta$.
Moreover, 
\begin{equation}\label{Inversec4}
\big[\cL\big(\Phi(\omega)\big)\big](z)=c(\omega,z),\qquad \omega\in\Omega,\;z\in(0,\infty)+i\R.
\end{equation}

Next, fix $\varphi\in L_2(\dom)$ and consider the function
$
\R\ni t\mapsto
\int_\dom E_0(t,x)S(x)\varphi(x)\dl x\in\C,
$
where $E_0$ is defined by \eqref{defE0};
we denote it by $\int_\dom E_0(\cdot,x)S(x)\varphi(x)\dl x$.
A direct calculation gives
\begin{align*}
\Big|\int_\dom E_0(t,x)S(x)\varphi(x)\dl x\Big|
&\leq\Big(\int_\dom |E_0(t,x)S(x)|^2\dl x\Big)^{1/2}\nnrm{\varphi}{L_2(\dom)}\\
&\leq C\left(t^{-1/2}+e^{-C/(4t)}(t^{-1}+t^{-1/2})\right)\nnrm{\varphi}{L_2(\dom)},
\end{align*}
where $C>0$ does not depend on $t$. In particular, the right sided function $\int_\dom E_0(\cdot,x)S(x)\varphi(x)\dl x$ belongs to $\cS'(\R)$. For $z\in(0,\infty)+i\R$, its Laplace transform is
\begin{equation}\label{InverseUS1}
\begin{aligned}
\int_0^\infty e^{-zt}\Big[\int_\dom E_0(t,x)S(x)\varphi(x)\dl x\Big]\dl t
&=\int_\dom \Big[\int_0^\infty e^{-zt}E_0(t,x)\dl t\Big]S(x)\varphi(x)\dl x\\
&=\int_\dom e^{-r\sqrt{z}}S(x)\varphi(x)\dl x,
\end{aligned}
\end{equation}
where, as in Section~\ref{Estimates for the Helmholtz equation}, $e^{-r\sqrt z}$ denotes the function $\dom\ni x=(r\cos\theta,r\sin\theta)\mapsto e^{-r\sqrt z}\in\C$.
The second step in \eqref{InverseUS1} is due to the identity \[\frac r{2\sqrt\pi}\int_0^\infty e^{-zt}t^{-3/2}e^{-r^2/(4t)}\dl t=e^{-r\sqrt z},\qquad r>0,\; z\in(0,\infty)+i\R,\]
a proof of which can be found in \cite[Section 8.4]{Foe93}, compare also \cite[Exercise 3A/3]{Gue91}. The application of Fubini's theorem in the first step in \eqref{InverseUS1}
is possible since
\begin{align*}
\int_\dom\int_0^\infty\big|e^{-zt}E_0(t,x)S(x)\varphi(x)\big|\dl t\,\dl x
&= \int_\dom\Big[\int_0^\infty e^{-(\Re z)t}E_0(t,x)\dl t\Big]S(x)|\varphi(x)|\dl x\\
&=\int_\dom e^{-r\sqrt{\Re z}}S(x)|\varphi(x)|\dl x <\infty.
\end{align*}

By Lemma~\ref{ZemThm}(iii), \eqref{Inversec4} and \eqref{InverseUS1} we obtain
\begin{equation*}
\Big[\cL\Big(\Phi(\omega)*\int_\dom E_0(\cdot,x)S(x)\varphi(x)\dl x\Big)\Big](z)= c(\omega,z)\int_\dom e^{-r\sqrt{z}}S(x)\varphi(x)\dl x
\end{equation*}
for all $\omega\in\Omega$ and $z\in(0,\infty)+i\R$. This, together with Theorem~\ref{PaleyWiener} and the uniqueness of the Fourier and the Laplace transform, implies the equality in $\cS'(\R)$
\begin{equation}
\cF\Big(\Phi(\omega)*\int_\dom E_0(\cdot,x)S(x)\varphi(x)\dl x\Big)= \int_\dom c(\omega,i\,\cdot\,)e^{-r\sqrt{i(\,\cdot\,)}}S(x)\varphi(x)\dl x,\quad\omega\in\Omega,
\end{equation}
where the right hand side denotes the function $\R\ni\xi\mapsto \int_\dom c(\omega,i\xi) e^{-r\sqrt{i\xi}}S(x)\varphi(x)\dl x\in\C$.
Thus, for $\varphi\in L_2(\dom)=(L_2(\dom))'$ and $\omega\in\Omega$ we may set, in analogy to \eqref{DefuR1},
\begin{equation}\label{DefuS1}
\begin{aligned}
u_{+,\mathrm S}^\varphi(\omega)&:=\mathcal F^{-1}_{\xi\to t}\Big\langle c(\omega,i\,\cdot\,)e^{-r\sqrt{i(\,\cdot\,)}}S,\varphi\Big\rangle_{L_2(\dom)\times (L_2(\dom))'}\\
&= \Phi(\omega)*\int_\dom E_0(\cdot,x)S(x)\varphi(x)\dl x.
\end{aligned}
\end{equation}

Similar to the argument for $u_{+,\mathrm R}^{\varphi}(\omega)$ above, using \eqref{Inversec1} and $\nnrm{e^{-r\sqrt{i\xi}}S}{L_2(\dom)}\leq\nnrm{S}{L_2(\dom)}$, one sees that for all $\omega\in\Omega$ the mapping $\varphi\mapsto u_{+,\mathrm S}^\varphi(\omega)$ is a Hilbert-Schmidt operator from $L_2(\dom)$ to $H^{(1-\alpha)/2-s}(\R)$. It follows in particular that $\big(\Phi*E_0\,S\big)(\omega)$ defined by
\begin{equation}\label{DefuS2}
\begin{aligned}
\big(\Phi*E_0\,S\big)(\omega)(\phi,\varphi)
&:= \big(\Phi*E_0\,S\big)(\omega,\phi,\varphi)\\
&:=\Big\langle \Phi(\omega)*\int_\dom E_0(\cdot,x)S(x)\varphi(x)\dl x,\phi\Big\rangle_{H^{-s}(\R)\times H^s(\R)}\\
&\;=\big\langle u_{+,\mathrm S}^\varphi(\omega),\phi\big\rangle_{H^{-s}(\R)\times H^s(\R)},\\
&\hspace{3.5cm}\omega\in\Omega,\;\phi\in H^s(\R),\;\varphi\in L_2(\dom),
\end{aligned}
\end{equation}
belongs to $H^{-s}(\R)\hat\otimes L_2(\dom)$ for all $\omega\in\Omega$.

Now we verify the assertion \eqref{mainResuSnotin}, i.e.,
\begin{equation*}
\big(\Phi*E_0\,S\big)(\omega)\notin \bigcup_{r\geq0}H^{-r}(\R)\hat\otimes H^{1+\alpha}(\dom)
\;\text{ on }\; \{\omega\in\Omega:\Phi(\omega)\not\equiv0\}.
\end{equation*}
It suffices to show that, for all $\omega\in\Omega$ such that $\Phi(\omega)\not\equiv 0$, the linear mapping $L_2(\dom)\ni\varphi\mapsto u_{+,\mathrm S}^\varphi(\omega)\in H^{(1-\alpha)/2-s}(\R)$ can {\it not} be extended to an element of
$\bo\big((H^{1+\alpha}(\dom))'; H^{-r}(\R)\big)$ for any $r\geq0$.
(Recall that we have $L_2(\dom)=(L_2(\dom))'\hookrightarrow (H^{1+\alpha}(\dom))'$ by Convention~\ref{convTP}.) Due to \eqref{DefuS1} and the norm equivalence mentioned subsequent to \eqref{HsFT},
\[
\gnnrm{u_{+,\mathrm S}^\varphi(\omega)}{H^{-r}(\R)}^2
\geq C\int_\R\big(1+|\xi|^2\big)^{-r}\Big|c(\omega,i\xi)\int_\dom e^{-r\sqrt{i\xi}}S(x)\varphi(x)\dl x\Big|^2\dl\xi.
\]
By the definition \eqref{DefPhi} of $\Phi(\omega)$ we know that $\Phi(\omega)\not\equiv 0$ if, and only if, $c(\omega,i\,\cdot\,)\not\equiv0$. In this case there exists a bounded set $B=B(\omega)\in\cB(\R)$ of positive Lebesgue measure and $\delta_0=\delta_0(\omega)>0$ such that $|c(\omega,i\xi)|\geq\delta_0$ for all $\xi\in B$. W.l.o.g.\ we can assume that $B\subset[0,\infty)$; the case $B\subset(-\infty,0]$ is treated similarly. Let $\varphi\in L_2(\dom)=(L_2(\dom))'$ be real-valued with $\supp\varphi\subset\overline{\dom_\epsilon}$, where $\dom_\epsilon:=\dom\cap\{x=(r\cos\theta,r\sin\theta):r<\epsilon\}$ with $\epsilon=\epsilon(B)>0$ such that $\cos\big(-r\sin(\pi/4)\sqrt{\xi}\big)\geq\delta_1>0$ for all $\xi\in B$ and $r\leq\epsilon$. Then, for all $\xi\in B$,
\begin{align*}
\Big|\int_\dom e^{-r\sqrt{i\xi}}S(x)\varphi(x)\dl x\Big|
&\geq C\Big|\int_\dom\cos\big(-r\sin(\pi/4)\sqrt\xi\big)S(x)\varphi(x)\dl x\Big|\\
&\geq C\delta_1\Big|\int_{\dom_\epsilon} S(x)\varphi(x)\dl x\Big|,
\end{align*}
where $C>0$ depends on $\dom$ and $B$. Since the restriction of $S$ to $\dom_\epsilon$ does not belong to $H^{1+\alpha}(\dom_\epsilon)$ (see \cite[Theorem 1.4.5.3]{Gri85}), one can find a sequence $(\varphi_k)_{k\in\N}\subset L_2(\dom_\epsilon;\R)\subset L_2(\dom_\epsilon)=(L_2(\dom_\epsilon))'$ which is bounded in $(H^{1+\alpha}(\dom_\epsilon))'$ and satisfies 
\[\lim_{k\to\infty}\Big|\int_{\dom_\epsilon} S(x)\varphi_k(x)\dl x\Big|=\infty.\] 
Identifying each $\varphi_k$ with its extension by zero to $\dom$, the sequence $(\varphi_k)_{k\in\N}\subset L_2(\dom)=(L_2(\dom))'$ is bounded in $(H^{1+\alpha}(\dom))'$ and we have $\lim_{k\to\infty}\nnrm{u_{+,\mathrm S}^{\varphi_k}(\omega)}{H^{-r}(\R)}=\infty$,
which implies the assertion.

\subsubsection{Combining the results}
\label{Combining the results}

We know that, for $\wP$-almost all $\omega\in\Omega$, the decomposition 
\begin{equation*}
U(\omega,i\xi)=U_{\mathrm R}(\omega,i\xi)+ c(\omega,i\xi)e^{-r\sqrt z}S,\quad\xi\in\R,
\end{equation*}
takes place in $D(\Lap)$, cf.~\eqref{StoGriDecomp}. In particular, for $\wP$-almost all $\omega\in\Omega$,
\begin{equation}\label{Combining1}
\big\langle U(\omega,i\xi),\varphi\big\rangle=\big\langle U_{\mathrm R}(\omega,i\xi),\varphi\big\rangle+\big\langle c(\omega,i\xi)e^{-r\sqrt z}S,\varphi\big\rangle,\quad\xi\in\R,\;\varphi\in L_2(\dom),
\end{equation}
where we abbreviate $\langle\,\cdot\,,\,\cdot\,\rangle=\langle\,\cdot\,,\,\cdot\,\rangle_{L_2(\dom)\times (L_2(\dom))'}$. By the definition \eqref{U(z)} of $U$ and the embedding $L_2(\dom)\hookrightarrow H^{-1}(\dom)$, the left hand side in \eqref{Combining1} can be rewritten as
\begin{equation}\label{Combining2}
\begin{aligned}
\big\langle U(\omega,i\xi),\varphi\big\rangle_{H^1_0(\dom)\times H^{-1}(\dom)}
&=\int_0^T e^{-i\xi t}\langle u(\omega,t),\varphi\rangle_{H^1_0(\dom)\times H^{-1}(\dom)}\dl t\\
&= \big[\cF_{t\to\xi}\big(\langle u_+(\omega),\varphi\rangle_{H^1_0(\dom)\times H^{-1}(\dom)}\big)\big](\xi).
\end{aligned}
\end{equation}
Similarly, by the definition \eqref{DefuR1} of $u_{+,\mathrm R}^\varphi$ and the embedding $L_2(\dom)\hookrightarrow (H^2(\dom))'$, the first term on the  right hand side in \eqref{Combining1} equals
\begin{equation}\label{Combining3}
\big\langle U_{\mathrm R}(\omega,i\xi),\varphi\big\rangle_{H^2(\dom)\times (H^2(\dom))'}
=\big[\cF_{t\to\xi}\big(u^\varphi_{+,\mathrm R}(\omega)\big)\big](\xi),
\end{equation}
and the second term on the right hand side in \eqref{Combining1} is 
\begin{equation}\label{Combining4}
\big\langle c(\omega,i\xi)e^{-r\sqrt z}S,\varphi\big\rangle_{L_2(\dom)\times (L_2(\dom))'}
=\big[\cF_{t\to\xi}\big(u^\varphi_{+,\mathrm S}(\omega)\big)\big](\xi)
\end{equation}
due to \eqref{DefuS1}.

The combination of \eqref{Combining1}, \eqref{Combining2}, \eqref{Combining3}, \eqref{Combining4}, together with the definitions \eqref{DefuR1}, \eqref{DefuS2} and the uniqueness of the Fourier transform, implies  that, for $\wP$-almost all $\omega\in\Omega$,
\begin{equation}\label{Combining5}
u_+(\omega,\phi,\varphi)=u_{+,\mathrm R}(\omega,\phi,\varphi)+\big(\Phi*E_0S\big)(\omega,\phi,\varphi),\quad \phi\in H^s(\R),\;\varphi\in L_2(\dom),
\end{equation}
where we use the notation introduced in Convention~\ref{conventionu}.
It has been shown in Subsections~\ref{The solution process as a tensor product-valued random variable} and \ref{Inverse transform of U_R} that both $u_{+}(\omega)$ and $u_{+,\mathrm R}(\omega)$ belong to $H^{-s}(\R)\hat\otimes H^1_0(\dom)$ for all $\omega\in\Omega$. As a consequence, the bilinear mapping 
\[\big(\Phi*E_0S\big)(\omega):H^s(\R)\times L_2(\dom)\to\C\] extends for $\wP$-almost all $\omega\in\Omega$ continuously to the domain $H^{-s}(\R)\times H^{-1}(\dom)$.  This extension also belongs to $H^{-s}(\R)\hat\otimes H^1_0(\dom)$. For all $\omega\in\Omega$ such that \eqref{Combining5} does not hold we redefine $\Phi(\omega):=0$, so that $\big(\Phi*E_0S\big)(\omega)$ belongs to $H^{-s}(\R)\hat\otimes H^1_0(\dom)$ for all $\omega\in\Omega$. 

Estimates \eqref{DPZ(7.17)}, \eqref{EstTildeX}, \eqref{EstuR1} and the measurability of $u_{+,\mathrm R}$ proved at the end of Subsection~\ref{Inverse transform of U_R} imply that $u_{+,\mathrm R}$ is an element of $L_2\big(\Omega,\cF_T,\wP;H^{-s}(\R)\hat\otimes H^2(\dom)\big)$. The embedding $H^{-s}(\R)\hat\otimes H^2(\dom)\hookrightarrow H^{-s}(\R)\hat\otimes H^1(\dom)$ yields $u_{+,\mathrm R}\in L_2\big(\Omega,\cF_T,\wP;H^{-s}(\R)\hat\otimes H^1_0(\dom)\big)$.
Since $u_{+}$ also belongs to $L_2\big(\Omega,\cF_T,\wP;H^{-s}(\R)\hat\otimes H^1_0(\dom)\big)$, so does $\Phi*E_0S$. 
Moreover, from the definition \eqref{DefPhi} of $\Phi$, the estimates \eqref{DPZ(7.17)}, \eqref{EstTildeX}, \eqref{Inversec2}, Lemma~\ref{cURholom}(i) and the continuity of the inverse Fourier transform $\cF_{\xi\to t}^{-1}:L_2(\R,(1+\xi^2)^{(1-\alpha)/2-s}\dl\xi;\C)\to H^{(1-\alpha)/2-s}(\R)$, one can derive the $\cF_T/\cB(H^{(1-\alpha)/2-s}(\R))$-measurability of $\Phi$. This, together with the estimates \eqref{DPZ(7.17)}, \eqref{EstTildeX} and \eqref{Inversec2}, yields $\Phi\in L_2(\Omega,\cF_T,\wP;H^{(1-\alpha)/2-s}(\R))$.
Together with \eqref{suppPhi}, this finishes the proof of the first assertion of Theorem~\ref{mainRes}

The assertion \eqref{mainResuSnotin} has been verified at the end of Subsection~\ref{Inverse transform of c e^{-r sqrt z}S}. The assertion \eqref{mainResDefPhi} is a consequence of \eqref{c(omega,z)} and \eqref{DefPhi}. Finally, the estimate \eqref{mainResEst} follows from \eqref{EstTildeX}, \eqref{EstuR1} and \eqref{Inversec3}. Theorem~\ref{mainRes} is proved.

\begin{appendix}

\section{Tensor products of Sobolev spaces}
\label{Tensor products of Sobolev spaces}

In this section we present several supplementary details concerning tensor products of Sobolev spaces as introduced in Subsection~\ref{The solution process as a tensor product-valued random variable}. Let us first look at the connection to altenative definitions and at further properties of tensor products of Hilbert spaces; our references for the described setting are Defant and Floret~\cite[Sections~2 and 26]{DeFl93}, Kadison and Ringrose~\cite[Section~2.6]{KaRi83}, Tr\`eves~\cite[Part~III]{Tre67} and Weidmann~\cite[Section~3.4]{Wei80}.

\subsubsection*{Tensor products of Hilbert spaces}

Let $\cH$ and $\cG$ be separable complex Hilbert spaces. The (Hilbert-Schmidt) tensor product $\cH\hat\otimes\cG$ is often introduced as an abstract completion of the algebraic tensor product $\cH\otimes\cG$. 

As in Subsection~\ref{The solution process as a tensor product-valued random variable}, for $h\in \cH$ and $g\in \cG$ we denote by $h\otimes g:\cH'\times \cG'\to\C$ the bilinear functional defined by
\begin{equation*}
h\otimes g(h',g')
:=\langle h,h'\rangle_{\cH\times \cH'}\langle g,g'\rangle_{\cG\times \cG'}=h'(h)g'(g),\qquad h'\in \cH',\,g'\in \cG'.
\end{equation*}
Then, (a realization of) the {\it algebraic tensor product} $\cH\otimes \cG$ of $\cH$ and $\cG$ is given by the vector space of all bilinear functionals on $\cH'\times \cG'$ of the form
\begin{equation}\label{algTen}
\sum_{j=1}^N h_j\otimes g_j,
\end{equation}
where $h_j\in \cH$, $g_j\in \cG$, $j=1,\ldots,N$ and $N\in\N$, endowed with the natural vector addition and scalar multiplication; compare \cite[Section 2.2]{DeFl93} or \cite[Chapter 42]{Tre67}. 
The algebraic tensor product is often alternatively introduced via the `universal property' (see \cite[Section 2.2]{DeFl93} or \cite[Chapter 39]{Tre67}) and thus uniquely determined up to an isomorphism, or it is defined as a quotient space of formal expansions of the form \eqref{algTen}, cf.\ \cite[Section 3.4]{Wei80}.
We choose here the specific realization of the algebraic tensor product as a space of bilinear functionals on the product of the dual spaces to be coherent with Definition~\ref{defTP}.

By setting
\begin{equation}\label{TPalt}
\left\langle\sum_{j=1}^N h_j\otimes g_j,\sum_{k=1}^M\tilde h_k\otimes\tilde g_k\right\rangle_{\cH\hat\otimes \cG}^\sim:=\sum_{j=1}^N\sum_{k=1}^M\langle h_j,\tilde h_k\rangle_\cH\langle g_j,\tilde g_k\rangle_\cG
\end{equation}
where $h_j,\tilde h_k\in \cH$, $g_j,\tilde g_k\in \cG$, $j=1,\ldots,N$, $k=1,\dots,M$ and $N,M\in\N$, one defines a scalar product on $\cH\otimes \cG$. The Hilbert-Schmidt tensor product $\cH\hat\otimes \cG$ is often defined as the abstract completion of $\cH\otimes \cG$ w.r.t.\ this scalar product, cf.~\cite[Section 3.4]{Wei80}. Let us show that this definition and Definition~\ref{defTP} are equivalent in the sense that they yield isometric isomorphic spaces. We note that the family of functionals
\begin{equation*}
h_j\otimes g_k:\cH'\times \cG'\to\C,(h',g')\mapsto\langle h_j,h'\rangle_{\cH\times \cH'}\langle g_k,g'\rangle_{\cG\times \cG'},\quad j,k\in\N,
\end{equation*}
is an orthonormal basis of the space of Hilberts-Schmidt functionals on $\cH'\times \cG'$ (as defined in Subsection~\ref{The solution process as a tensor product-valued random variable}) whenever $(h_j)_{j\in\N}$ and $(g_k)_{k\in\N}$ are orthonormal bases of $\cH$ and $\cG$. This follows from 
\cite[Proposition~2.6.2]{KaRi83}, the fact that
$(\langle\cdot,h_j\rangle_\cH)_{j\in\N}$ and $(\langle\cdot,g_k\rangle_\cG)_{k\in\N}$ are orthonormal bases of $\cH'$ and  $\cG'$, and the identities $\langle h',\langle\cdot,h_j\rangle_\cH\rangle_{\cH'}=\langle h_j,R_\cH h'\rangle_\cH=\langle h_j,h'\rangle_{\cH\times \cH'}$, $\langle g',\langle\cdot,g_k\rangle_\cG\rangle_{\cG'}=\langle g_k,R_\cG g'\rangle_\cG=\langle g_k,g'\rangle_{\cG\times \cG'}$. Here $R_\cH$ and $R_\cG$ denote the conjugate linear Riesz mappings from $\cH'$ to $\cH$ and from $\cG'$ to $\cG$, respectively. As a consequence, the algebraic tensor product $\cH\otimes\cG$ is dense in the space $\cH\hat\otimes\cG$ as introduced in Definition~\ref{defTP}.
Therefore, in order to verify that both definitions of $\cH\hat\otimes \cG$ are equivalent, it is sufficient to check that the norm $\nnrm{\cdot}{\cH\hat\otimes\cG}^\sim$ corresponding to the scalar product \eqref{TPalt} and the norm $\nnrm{\cdot}{\cH\hat\otimes\cG}$ in Definition~\ref{defTP} coincide on $\cH\otimes\cG$. To this end, consider a finite linear combination of simple tensors as in \eqref{algTen}, let $(h_k')_{k\in\N}$ and $(g_l')_{l\in\N}$ be orthonormal bases of $\cH'$ and $\cG'$, and observe that
\begin{align*}
\left\|\sum_{j=1}^Nh_j\otimes g_j\right\|_{\cH\hat\otimes \cG}^2&=\sum_{k,l\in\N}\left|\left(\sum_{j=1}^Nh_j\otimes g_j\right)(h_k',g_l')\right|^2\\
&=\sum_{k,l\in\N}\left|\sum_{j=1}^N\langle h_j,h_k'\rangle_{\cH\times \cH'}\langle g_j,g_l'\rangle_{\cG\times \cG'}\right|^2\\
&=\sum_{k,l\in\N}\sum_{i,j=1}^Nh_k'(h_i)\overline{h_k'(h_j)}
g_l'(g_i)\overline{g_l'(g_j)}\\
&=\sum_{i,j=1}^N\langle h_i,h_j\rangle_\cH\langle g_i,g_j\rangle_\cG\\
&=\Bigg(\Bigg\|\sum_{j=1}^Nh_j\otimes g_j\Bigg\|_{\cH\hat\otimes \cG}^\sim\Bigg)^2
\end{align*}
by Parseval's equality.

In the course of the proof of Theorem~\ref{mainRes} we make use of the fact that the definition of $\cH\hat\otimes \cG$ as the space of Hilbert-Schmidt functionals on $\cH'\times \cG'$ entails the isometric isomorphisms to spaces of Hilbert-Schmidt operators
\[K:\cH\hat\otimes \cG\to\hs(\cH';\cG)
\qquad\text{ and }\qquad L:\cH\hat\otimes \cG\to\hs(\cG';\cH)\]
given by
\begin{equation}\label{HSFasHSO}
f(h',g')=\langle(Kf)h',g'\rangle_{\cG\times \cG'}=\langle (Lf)g',h'\rangle_{\cH\times \cH'},\quad f\in \cH\hat\otimes \cG,\;h'\in \cH',\;g'\in \cG'.
\end{equation}
We may therefore consider elements $f\in \cH\hat\otimes \cG$ as operators $Kf\in\hs(\cH';\cG)$ or $Lf\in\hs(\cG';\cH)$ whenever this point of view appears to be convenient. In doing so, we omit the explicit notation of the mappings $K$ and $L$ if no confusion is possible.

\subsubsection*{Embeddings of tensor products of Sobolev spaces}

The Sobolev-Slobodeckij spaces of order $s\geq0$ on an arbitrary domain $D\subset\R^d$ can be defined as follows, cf.~\cite{Gri85}.
\begin{definition}\label{DefSobolev}
Let $D$ be an open subset of $\R^d$ ($d\in\N$) and $s\geq0$. The Sobolev-Slobodeckij space $H^s(D)$ is the space of all square integrable, complex-valued functions $f\in L_2(D)$ such that
\begin{itemize}
\item[(i)] $D^\alpha f\in L_2(D)$ for $|\alpha|\leq s,\;\alpha\in\N^d,$ if $s\in\Nn$,
\item[(ii)] $f\in H^{\lfloor s\rfloor}(D)$ and
\[\int_D\int_D\frac{\big|D^\alpha f(x)-D^\alpha f(y)\big|^2}{|x-y|^{2(s-\lfloor s\rfloor)+d}}\,\mathrm dx\,\mathrm  dy<\infty\]
for $|\alpha|=\lfloor s\rfloor,\;\alpha\in\N^d$, if $s\notin\Nn$. 
\end{itemize}
Here, $|\alpha|=\sum_{j=1}^d|\alpha_j|$, $D^\alpha=\partial^{|\alpha|}/(\partial^{\alpha_1}\ldots\partial^{\alpha_d})$, and $\lfloor s\rfloor\in\N$ is the largest integer smaller than or equal to $s$.
The space $H^s(D)$ is endowed with the norm
\begin{equation*}\label{Wspi}
\nnrm{f}{H^s(D)}:=\Bigg(\sum_{|\alpha|\leq s}\gnnrm{D^\alpha f}{L_2(D)}^2\Bigg)^{1/2}
\end{equation*}
in case (i), and with the norm
\begin{equation*}\label{Wspii}
\nnrm{f}{H^s(D)}:=\Bigg(\nnrm{f}{H^{\lfloor s\rfloor}(D)}^2+\sum_{|\alpha|=\lfloor s\rfloor}\int_D\int_D\frac{\big|D^\alpha f(x)-D^\alpha f(y)\big|^2}{|x-y|^{2(s-\lfloor s\rfloor)+d}}\,\mathrm dx\,\mathrm  dy\Bigg)^{1/2}
\end{equation*}
in case (ii).
\end{definition}

Recall from Subsection~\ref{The solution process as a tensor product-valued random variable} that we put $H^{-s}(D):=(H^s_0(D))'$, $s\geq0$. In particular, $H^{-s}(\R^d)=(H^s_0(\R^d))'=(H^s(\R^d))'$. In the proof of Theorem~\ref{mainRes} we use the following well-known characterization of the spaces $H^s(\R^d)$, $s\in\R$, in terms of the Fourier transform: One has
\begin{equation}\label{HsFT}
H^s(\R^d)=\left\{f\in\mathcal S'(\R^d):\gnnrm{\mathcal F^{-1}(1+|\cdot|^2)^{s/2}\mathcal Ff}{L_2(\R^d)}<\infty\right\},
\end{equation}
for all $s\in\R$ and  $\nnrm{\mathcal F^{-1}(1+|\cdot|^2)^{s/2}\mathcal F\cdot}{L_2(\R^d)}$ is an equivalent norm for $H^s(\R^d)$; see, e.g., \cite{Tri78} (Definition 2.3.1, the last identity in Theorem 2.3.2(d), Theorem 2.3.3, Theorem~2.5.1, Remarks~2.5.1/3, 2.5.1/4 and Theorem 2.6.1(a) therein).

Also recall from Subsection~\ref{The solution process as a tensor product-valued random variable} that we identify $L_2(D)$ with its (topological) dual space $(L_2(D))'$ via the isometric isomorphism
$L_2(D)\ni v\mapsto \langle\,\cdot\,,\overline v\rangle_{L_2(D)}\in (L_2(D))'$, so that we obtain a chain of continuous and dense (linear) embeddings
\begin{equation*}
H^{s_2}(D)\hookrightarrow H^{s_1}(D)\hookrightarrow L_2(D)= (L_2(D))'\hookrightarrow (H^{s_1}(D))'\hookrightarrow (H^{s_2}(D))',\quad 0\leq s_1\leq s_2,
\end{equation*}
see Convention~\ref{convTP}.
The next proposition leads to a useful characterization of the respective embeddings of tensor products of Sobolev spaces, which will be used in the proof of Theorem~\ref{mainRes}, too.
For Hilbert spaces $\cH_1$, $\cH_2$, $\cG_1$, $\cG_2$ and bounded linear operators $S\in\bo(\cH_1;\cH_2)$, $T\in\bo(\cG_1;\cG_2)$, we denote by $S\hat\otimes T\in\bo\big(\cH_1\hat\otimes\cG_1;\cH_2\hat\otimes\cG_2\big)$ the bounded linear extension of the tensor product $S\otimes T:\cH_1\otimes\cG_1\to\cH_2\otimes\cG_2$ given by $S\otimes T\,(h\otimes g):=(Sh)\otimes(Tg)$, $h\in\cH_1$, $g\in\cG_1$.

\begin{proposition}\label{TPemb}
Let $\cH_1$, $\cH_2$, $\cG_1$, $\cG_2$ be separable complex Hilbert spaces and let $\i:\cH_1\hookrightarrow\cH_2$, $\j:\cG_1\hookrightarrow\cG_2$ be continous linear embeddings. Then, the tensor product map $\i\hat\otimes\j:\cH_1\hat\otimes\cG_1\to\cH_2\hat\otimes\cG_2$ is a continous linear embedding that acts on elements $f\in\cH_1\hat\otimes\cG_1$ via
\begin{equation*}
\big(\i\hat\otimes\j\,(f)\big)(h',g')=f\big(\i'(h'),\j'(g')\big),\quad h'\in\cH_2',\;g'\in\cG_2',
\end{equation*}
where $\i':\cH_2'\to\cH_1'$ and $\j':\cG_2'\to\cG_1'$ are the dual operators to $\i$ and $\j$.
\end{proposition}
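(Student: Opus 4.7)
The plan is to reduce everything to the operator picture via the isometric isomorphism $K:\cH\hat\otimes\cG\to\hs(\cH';\cG)$ from \eqref{HSFasHSO}. Given the bounded linear maps $\i$ and $\j$, I would first define a candidate map $\widetilde T:\hs(\cH_1';\cG_1)\to\hs(\cH_2';\cG_2)$ by $A\mapsto\j\circ A\circ\i'$. This is clearly linear, and the ideal property of Hilbert--Schmidt operators yields $\|\widetilde TA\|_{\hs}\leq\|\j\|\,\|A\|_{\hs}\,\|\i'\|=\|\i\|\|\j\|\,\|A\|_{\hs}$. Conjugating with $K$ gives a bounded linear $T:\cH_1\hat\otimes\cG_1\to\cH_2\hat\otimes\cG_2$ of norm $\leq\|\i\|\|\j\|$.

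Next I would identify $T$ with $\i\hat\otimes\j$ by checking agreement on simple tensors. Using \eqref{HSFasHSO}, $K(h\otimes g)$ is the operator $h'\mapsto h'(h)\,g$, so $\widetilde T\bigl(K(h\otimes g)\bigr)$ sends $h'\in\cH_2'$ to $\j\bigl(\i'(h')(h)\,g\bigr)=h'(\i h)\,\j g$, which is precisely $K\bigl((\i h)\otimes(\j g)\bigr)$. Hence $T(h\otimes g)=(\i h)\otimes(\j g)$, i.e., $T$ coincides with $\i\otimes\j$ on the algebraic tensor product; since the latter is dense in $\cH_1\hat\otimes\cG_1$ and $T$ is bounded, $T$ is the unique bounded linear extension, namely $\i\hat\otimes\j$. (This also shows that the extension invoked in the definition of $\i\hat\otimes\j$ exists.) The pointwise formula then follows by a direct unwinding: for $f\in\cH_1\hat\otimes\cG_1$ and $(h',g')\in\cH_2'\times\cG_2'$,
\[
(\i\hat\otimes\j(f))(h',g')=\langle K(\i\hat\otimes\j(f))\,h',g'\rangle_{\cG_2\times\cG_2'}=\langle \j\,Kf\,\i'(h'),g'\rangle_{\cG_2\times\cG_2'}=\langle Kf(\i'(h')),\j'(g')\rangle_{\cG_1\times\cG_1'}=f(\i'(h'),\j'(g')).
\]

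It remains to verify injectivity of $\i\hat\otimes\j$. Suppose $\i\hat\otimes\j(f)=0$, i.e., $f(\i'(h'),\j'(g'))=0$ for all $h'\in\cH_2'$, $g'\in\cG_2'$. By Hahn--Banach applied in each factor, the injectivity of $\i$ forces $\i'(\cH_2')$ to be norm-dense in $\cH_1'$: if $x\in\cH_1$ satisfies $h'(\i x)=\i'(h')(x)=0$ for every $h'\in\cH_2'$, then $\i x=0$, hence $x=0$, so the annihilator of $\i'(\cH_2')$ in $\cH_1=(\cH_1')'$ is trivial. The same applies to $\j'(\cG_2')\subset\cG_1'$. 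Since $f$ is a bounded bilinear functional on $\cH_1'\times\cG_1'$, it vanishes on the whole product, and therefore $f\equiv 0$.

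The computations in the first two paragraphs are essentially bookkeeping in the operator model; the step that genuinely uses the embedding hypothesis (as opposed to mere boundedness of $\i$ and $\j$) is the density argument in the last paragraph, and this is where I expect the main care to be required, namely in invoking Hahn--Banach correctly to pass from injectivity of $\i$ to norm density of $\i'(\cH_2')$ in $\cH_1'$.
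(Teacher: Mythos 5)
Your proof is correct, but it takes a genuinely different route from the paper's. The paper expands $f=\sum_{k,l}f(b_k',e_l')\,b_k\otimes e_l$ in orthonormal bases of $\cH_1\hat\otimes\cG_1$, applies $\i\hat\otimes\j$ termwise, and reads off the evaluation formula from the unconditionally convergent series; injectivity is then dismissed in one sentence as following ``from the injectivity of $\i$ and $\j$ and the bilinear structure.'' You instead work entirely in the operator model through the isomorphism $K$ of \eqref{HSFasHSO}, define $A\mapsto\j\circ A\circ\i'$, and use the ideal property of $\hs$ to get boundedness with the explicit bound $\|\i\|\,\|\j\|$; agreement on simple tensors plus density then identifies your map with $\i\hat\otimes\j$ and simultaneously settles the well-definedness of the extension invoked in its definition. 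The trade-off: the paper's basis computation is more self-contained and delivers the pointwise formula directly, while your operator-theoretic version gives the norm estimate for free and, more importantly, supplies an actual argument for injectivity --- the Hahn--Banach step showing that injectivity of $\i$ makes $\i'(\cH_2')$ norm-dense in $\cH_1'$ (using reflexivity), so that a bounded bilinear functional vanishing on $\i'(\cH_2')\times\j'(\cG_2')$ vanishes identically. That step is exactly where the substance lies, and you have it right; it fills in the detail the paper leaves implicit.
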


\begin{proof}
Let $(b_k)_{k\in\N}$ and $(e_l)_{l\in\N}$ be orthonormal bases of $\cH_1$ and $\cG_1$, respectively, with dual orthonormal bases $(b_k')_{k\in\N}:=\big(\langle\cdot,b_k\rangle_{\cH_1}\big)_{k\in\N}$ and $(e_l')_{l\in\N}:=\big(\langle\cdot,e_l\rangle_{\cG_1}\big)_{l\in\N}$ of $\cH_1'$ and $\cG_1'$.
Fix $f\in\cH_1\hat\otimes\cG_1$. Since $(b_k\otimes e_l)_{k,l\in\N}$ is an orthonormal basis of $\cH_1\hat\otimes\cG_1$ and since $\langle f,b_k\otimes e_l\rangle_{\cH_1\hat\otimes\cG_1}=f(b_k',e_l')$, we have the expansion
\[f=\sum_{k,l\in\N}f(b_k',e_l')\,b_k\otimes e_l\]
in the space $\cH_1\hat\otimes\cG_1$. The tensor product $\i\hat\otimes\j$ maps $f$ to
\[\i\hat\otimes\j\,(f)=\sum_{k,l\in\N}f(b_k',e_l')\,\i(b_k)\otimes\j(e_l),\]
where the infinite sum converges unconditionally in $\cH_2\hat\otimes\cG_2$. Therefore, for all $h'\in\cH_2'$ and $g'\in\cH_2'$,
\begin{align*}
\big(\i\hat\otimes\j\,(f)\big)(h',g')&=\sum_{k,l\in\N}f(b_k',e_l')\,\big\langle\i(b_k),h'\big\rangle_{\cH_2\times\cH_2'}\big\langle\j(e_l),g'\big\rangle_{\cG_2\times\cG_2'}\\
&=\sum_{k,l\in\N}f(b_k',e_l')\,\big\langle b_k,\i'(h')\big\rangle_{\cH_1\times\cH_1'}\big\langle e_l,\j'(g')\big\rangle_{\cG_1\times\cG_1'}\\
&= f\big(\i'(h'),\j'(g')\big),
\end{align*}
where the infinite sum converges unconditionally in $\C$. The injectivity of $\i\hat\otimes\j$ follows from the injectivity of $\i$ and $\j$ and the bilinear structure of the tensor product spaces.
\end{proof}

For an arbitrary domain $D\subset\R^d$ and smoothness parameters $0\leq s_2\leq s_2$, the dual embbedding $\i':(H^{s_1}(D))'\hookrightarrow(H^{s_2}(D))'$ to the  natural embedding $\i:H^{s_2}(D)\hookrightarrow H^{s_1}(D)$ is the restriction of functionals on $H^{s_1}(D)$ to the smaller domain $H^{s_2}(D)$. Therefore, with respect to the tensor product spaces $(H^s(I))'\hat\otimes H^r(\dom)$, $r,s\geq0$, where $I=(0,T)$ or $I=\R$, Proposition~\ref{TPemb} states the following: For $0\leq s_1\leq s_2$ and $0\leq r_1\leq r_2$ the embedding
\begin{equation}\label{TPemb1App}
(H^{s_1}(I))'\hat\otimes H^{r_2}(\dom)\hookrightarrow (H^{s_2}(I))'\hat\otimes H^{r_1}(\dom),
\end{equation}
given by the tenor product $\i\hat\otimes\j$ of the natural embeddings $\i:(H^{s_1}(I))'\hookrightarrow (H^{s_2}(I))'$ and $\j:H^{r_2}(\dom)\hookrightarrow  H^{r_1}(\dom)$, acts on an element $f\in(H^{s_1}(I))'\hat\otimes H^{r_2}(\dom)$ by restricting the bilinear functional $f:H^{s_1}(I)\times(H^{r_2}(\dom))'$ to the smaller domain $H^{s_2}(I)\times (H^{r_1}(\dom))'$. 
Identifying $(H^s(I))'\hat\otimes H^r(\dom)$ with the spaces of Hilbert-Schmidt operators $\hs\big(H^s(I);H^r(\dom)\big)$ and  $\hs\big((H^{r}(\dom))';(H^{s}(I))'\big)$ according to \eqref{HSFasHSO}, it is clear that the embedding \eqref{TPemb1App} becomes the restriction of operators $T\in\hs\big(H^{s_1}(I);H^{r_2}(\dom)\big)$ to the domain $H^{s_2}(I)$ and the restriction of operators $\hs\big((H^{r_2}(\dom))';(H^{s_1}(I))'\big)$ to the domain $(H^{r_1}(\dom))'$, respectively.

\section{General bounded polygonal domains}
\label{General bounded polygonal domains}

In this section we state the extension of our main result to general bounded polygonal domains. It can be verified by following the lines of Secion~\ref{Proof of the main result}, using instead of the results of Subsection~\ref{Estimates for the Helmholtz equation} their generalization to the Helmholtz equation on arbitrary bounded polygonal domains; see \cite[Chapter~2]{Gri92}. 

Let $\dom\subset\R^2$ be a general bounded polygonal domain as specified at the beginning of Subsection~\ref{Stochastic heat equation}. By $V_j\in\R^2,\;j\in\{1,\ldots,n\},$ we denote the vertices of the boundary $\partial\dom$, numbered according to their
order within $\partial\dom$ in counter-clockwise orientation. The respective interior angles of $\partial\dom$
at these vertices are denoted by $\gamma_j\in(0,2\pi)$. With each vertex $V_j$ we associate a system of polar coordinates $(r_j,\theta_j)\in[0,\infty)\times[0,2\pi)$
with origin at $V_j$. For any point $P$ in the plane, the $r_j$-coordinate is the distance from $P$ to $V_j$ and the
$\theta_j$-coordinate is the angle between the line segments $V_jV_{j+1}$ and $V_jP$; see Figure~\ref{fig:Bild} below.


\psset{yunit=1.7,xunit=1.7}

\begin{figure}
\begin{center}
\begin{pspicture}(3.6,3.5)

\pspolygon(0.3,0.9)(1.6,1.5)(2,0)(3.7,1)(3.3,3.5)(2.4,2.7)(1,3.6)(0.7,2.6)(0,2)

\psdot(0.3,0.9)
\psdot(1.6,1.5)
\psdot(2,0)
\psdot(3.7,1)
\psdot(3.3,3.5)
\psdot(2.4,2.7)
\psdot(1,3.6)
\psdot(0.7,2.6)
\psdot(0,2)

\psline{-}(1.6,1.5)(3,1.8)
\psdot(3,1.8)
\psarc{<->}(1.6,1.5){1.1}{-75}{13}
\psarc{<->}(1.6,1.5){0.9}{-75}{204}

\uput[d](1.5,1.4){$V_j$}
\uput[u](3,1.8){$P$}
\uput[r](0.2,0.7){$V_{j-1}$}
\uput[l](2,0){$V_{j+1}$}

\uput[u](1.55,1.6){$\gamma_j$}
\uput[r](2.05,1){$\theta_j$}
\uput[u](2.4,1.6){$r_j$}

\uput[u](1.3,2.4){\large$\dom$}

\end{pspicture}
\end{center}

\caption[Bild]{\begin{tabular}[t]{l}  General bounded polygonal domain $\dom\subset\R^2$\end{tabular}}\label{fig:Bild}
\end{figure}


In analogy to \eqref{defS}, we set $\alpha_j:=\pi/\gamma_j$ and define corner singularity functions $S_j:\dom\to\R$ associated to every vertex $V_j$ such that $\gamma_j>\pi$ by 
\begin{equation}\label{defSj}
S_j(r_j,\theta_j):=\eta_j(r_j,\theta_j)r_j^{\alpha_j}\sin(\alpha_j\theta_j).
\end{equation}
Here $\eta_j\in C^\infty(\overline\dom;\R)$ is a truncation function which does
not depend on the $\theta_j$-coordinate, equals one near $V_j$ and vanishes in a neighborhood
of the sides of $\partial\dom$ which do not end at $V_j$. We choose
these truncation functions in such a way that their supports are disjoint. Further, for all $j\in\{1,\ldots,n\}$ such that $\gamma_j>\pi$ we define $\psi_j\in L_2(\dom)$ by
\[\psi_j(r_j,\theta_j):=\eta_j(r_j,\theta_j)r_j^{-\alpha_j}\sin(\alpha_j\theta_j),\]
and $\varphi_j\in D(\Lap)$ is the unique solution in $H^1_0(\dom)$
to the problem $\Delta\varphi_j=\Delta\psi_j$. In analogy to \eqref{v1}, we define functions $v_j(z)\in L_2(\dom)$ for all $z\in\C\setminus \sigma(\Lap)$ and $j\in\{1,\ldots,n\}$ such that $\gamma_j>\pi$ by
\begin{equation}\label{vj}
v_j(z):=\left[\id{L_2(\dom)}-z\big(z\id{L_2(\dom)}-\Lap\big)^{-1}\right]\frac 1\pi(\psi_j-\varphi_j),
\end{equation}
where $\big(z\id{L_2(\dom)}-\Lap\big)^{-1}\in\bo(L_2(\dom))$ is the $z$-resolvent of $\Lap$.
In analogy to \eqref{defE0}, we define kernel functions $E_j:\R\times\dom\to \R$ for all $j\in\{1,\ldots,n\}$ such that $\gamma_j>\pi$ by
\begin{equation}\label{defEj}
E_j(t,x):=\one_{(0,\infty)}(t)(2\sqrt\pi)^{-1}t^{-3/2}r_j e^{-r_j^2/(4t)},\quad t\in\R,\;x=(r_j\cos\theta_j,r_j\sin\theta_j)\in\dom.
\end{equation}

Now we can formulate the generalization of Theorem~\ref{mainRes}.

\begin{thm}\label{mainResGen}
Let $\dom\subset\R^2$ be a bounded polygonal domain with vertices $V_j\in\R^2$, $j\in\{1,\ldots,n\}$ and corresponding interior angles $\gamma_j\in (0,2\pi)$ as described above.
Let Assumption \ref{AssFBu0} hold, let $u=(u(t))_{t\in[0,T]}$ be the mild solution to Eq.~\eqref{SHE} and let $u_+$ be its extension by zero to the whole real line, considered as an element of
$L_2\big(\Omega,\cF_T,\wP;L_2(\R)\hat\otimes H^1_0(\dom)\big)$ as described in Subsection~\ref{The solution process as a tensor product-valued random variable}. Let $s> 1/2$ and set $\alpha_j:=\pi/\gamma_j$. 

There exist
\[u_{+,\mathrm R}\in L_2\big(\Omega,\cF_T,\wP;H^{-s}(\R)\hat\otimes H^2(\dom)\big)\cap L_2\big(\Omega,\cF_T,\wP;H^{-s}(\R)\hat\otimes H^1_0(\dom)\big)\]
and
\[\Phi_j\in L_2\big(\Omega,\cF_T,\wP; H^{(1-\alpha_j)/2-s}(\R)\big),\quad j\in\{1,\ldots,n\}\text{ such that }\gamma_j>\pi,\]
with $\supp\Phi_j(\omega)\subset [0,\infty)$ for all $\omega\in\Omega$
(in the sense of distributions), such that the equality
\begin{equation*}\label{mainRes1}
u_+=u_{+,\mathrm R}+\sum_{\gamma_j>\pi}\Phi_j*E_j\,S_j
\end{equation*}
holds in $L_2\big(\Omega,\cF_T,\wP;H^{-s}(\R)\hat\otimes H^1_0(\dom)\big)$.
Here $\Phi_j*E_j\,S_j$ denotes the element of the space $L_2\big(\Omega,\cF_T,\wP;H^{-s}(\R)\hat\otimes H^1_0(\dom)\big)$ that acts on test functions $(\phi,\varphi)\in H^s(\R)\times L_2(\dom)$ 
\linebreak$(\hookrightarrow H^s(\R)\times H^{-1}(\dom))$ via
\begin{equation*}
\begin{aligned}
\big(\Phi_j*E_j\,S_j\big)(\omega)(\phi,\varphi)
&:= \big(\Phi_j*E_j\,S_j\big)(\omega,\phi,\varphi)\\
&:=\Big\langle \Phi_j(\omega)*\int_\dom E_j(\cdot,x)S_j(x)\varphi(x)\dl x,\phi\Big\rangle_{H^{-s}(\R)\times H^s(\R)},\; \omega\in\Omega,
\end{aligned}
\end{equation*}
where $S_j$ and $E_j$ are given by \eqref{defSj} and \eqref{defEj}, $\int_\dom E_j(\cdot,x)S_j(x)\varphi(x)\dl x$ denotes the function
$\R\ni t\mapsto\int_\dom E_j(t,x)S_j(x)\varphi(x)\dl x\in\C$, and $*$ is the usual convolution of Schwartz distributions.

We have
\begin{equation*}
\big(\Phi_j*E_j\,S_j\big)(\omega)\notin \bigcup_{r\geq0}H^{-r}(\R)\hat\otimes H^{1+\alpha_j}(\dom)
\;\text{ on }\; \{\omega\in\Omega:\Phi_j(\omega)\neq0\}
\end{equation*}
and $\Phi_j$ is determined in terms of its Fourier transform
w.r.t.\ the time variable $t\in\R$ as follows: For $\wP$-almost
every $\omega\in\Omega$,
\begin{equation*}\label{mainResGenDefPhij}
\big[\mathcal F_{t\to\xi}\big(\Phi_j(\omega)\big)\big](\xi)=\Big\langle H(\omega,i\xi),
\overline{v_j(i\xi)}\Big\rangle_{L_2(\dom)} \text{ for $\lambda$-almost every }\xi\in\R,
\end{equation*}
where $v_j$ and $H$ are defined by \eqref{vj} and \eqref{H(z)}.

Moreover, 
\begin{equation*}\label{mainResGenCorEst}
\begin{aligned}
&\E\Big(\nnrm{u_{+,\mathrm R}}{H ^{-s}(\R)\hat\otimes H^2(\dom)}^2
+\sum_{\gamma_j>\pi}\nnrm{\Phi_j}{H^{(1-\alpha_j)/2-s}(\R)}^2\Big)\\
&\leq C\E\Big(\nnrm{u_0}{L_2(\dom)}^2+\nnrm{u(T)}{L_2(\dom)}^2+\int_0^T\gnnrm{F\big(u(t)\big)}{L_2(\dom)}^2\dl t+\sup_{t\in[0,T]}\gnnrm{G\big(\tilde u(t)\big)}{\hs(U_0;L_2(\dom))}^2\Big),
\end{aligned}
\end{equation*}
where $C>0$ depends only on $s$, $T$, $\dom$ and the cut-off functions $\eta_j$ in \eqref{defSj}, and where $\tilde u=(\tilde u(t))_{t\in[0,T]}$ denotes the modification of $u=(u(t))_{t\in[0,T]}$ that is continuous in $L_2(\dom;\R)$.
\end{thm}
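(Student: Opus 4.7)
The plan is to follow the proof of Theorem~\ref{mainRes} step by step, replacing the single-corner ingredients by their multi-corner analogues from \cite[Chapter~2]{Gri92}. First I would repeat Subsection~\ref{Laplace transform of the stochastic heat equation} verbatim, since nothing there used Assumption~\ref{AssO}: for every $\omega$ in a set $\Omega_0$ of full measure and every $z\in\C$ the Laplace transform $U(\omega,z):=\int_0^T e^{-zt}u(\omega,t)\,\dl t$ lies in $D(\Lap)$ and satisfies $-\Delta U(\omega,z)+zU(\omega,z)=H(\omega,z)$, with $H$ as in \eqref{H(z)} and a fixed holomorphic modification obtained via Lemma~\ref{X(phi)Modification}.

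Next, I would invoke the multi-corner version of Grisvard's decomposition (\cite[Theorem~2.4.3 and Section~2.5.2]{Gri92}, generalizing Theorem~\ref{GriEllEst}): for $z\in\C\setminus(-\infty,0)$ there is a unique decomposition
\begin{equation*}
U(\omega,z)=U_{\mathrm R}(\omega,z)+\sum_{\gamma_j>\pi}c_j(\omega,z)\,e^{-r_j\sqrt z}S_j,
\end{equation*}
with $U_{\mathrm R}(\omega,z)\in H^2(\dom)\cap H^1_0(\dom)$ and scalar coefficients $c_j(\omega,z)=\langle H(\omega,z),\overline{v_j(z)}\rangle_{L_2(\dom)}$, where $v_j(z)$ is given by \eqref{vj}. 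The corresponding estimate
\begin{equation*}
\nnrm{U_{\mathrm R}(\omega,z)}{H^2(\dom)}+\sum_{\gamma_j>\pi}(1+|z|)^{(1-\alpha_j)/2}|c_j(\omega,z)|\leq C\nnrm{H(\omega,z)}{L_2(\dom)},\quad |\arg z|\leq\theta_0,
\end{equation*}
is also in \cite[Section~2.5.2]{Gri92}. Combined with Lemma~\ref{HIntAbschLem} (which is independent of Assumption~\ref{AssO}) this yields $\omega$-wise $L_2$-bounds on $\R\ni\xi\mapsto U_{\mathrm R}(\omega,i\xi)$ with weight $(1+\xi^2)^{-s}$ and on $\xi\mapsto c_j(\omega,i\xi)$ with weight $(1+\xi^2)^{(1-\alpha_j)/2-s}$, plus polynomial bounds on $[0,\infty)+i\R$.

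With these bounds I would now invert the transform as in Subsection~\ref{Inverse transform}: applying Theorem~\ref{PaleyWiener} corner by corner, the Fourier preimage $u_{+,\mathrm R}^\varphi(\omega):=\mathcal F^{-1}_{\xi\to t}\langle U_{\mathrm R}(\omega,i\cdot),\varphi\rangle_{H^2(\dom)\times (H^2(\dom))'}$ lies in $H^{-s}(\R)$ with support in $[0,\infty)$, and likewise $\Phi_j(\omega):=\mathcal F^{-1}_{\xi\to t}c_j(\omega,i\cdot)\in H^{(1-\alpha_j)/2-s}(\R)$ has support in $[0,\infty)$. The computation of the Laplace transform of $\int_\dom E_j(\cdot,x)S_j(x)\varphi(x)\dl x$ performed in Subsection~\ref{Inverse transform of c e^{-r sqrt z}S} is local near $V_j$ and depends only on the scalar identity $(r_j/(2\sqrt\pi))\int_0^\infty e^{-zt}t^{-3/2}e^{-r_j^2/(4t)}\dl t=e^{-r_j\sqrt z}$, so it yields without change that $\mathcal F(\Phi_j(\omega)*\int_\dom E_j(\cdot,x)S_j(x)\varphi(x)\dl x)=\langle c_j(\omega,i\cdot)e^{-r_j\sqrt{i\cdot}}S_j,\varphi\rangle$, and hence the decomposition holds in $L_2(\Omega;H^{-s}(\R)\hat\otimes H^1_0(\dom))$; the measurability in $\omega$ and the norm estimate are obtained exactly as in Subsection~\ref{Combining the results}, summed over $j$.

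The main technical point — which I expect to be the only place where an essentially new argument is needed — is the corner-by-corner singularity assertion
\begin{equation*}
(\Phi_j*E_jS_j)(\omega)\notin\bigcup_{r\geq 0}H^{-r}(\R)\hat\otimes H^{1+\alpha_j}(\dom)\quad\text{on }\{\Phi_j(\omega)\not\equiv 0\}.
\end{equation*}
Here I would exploit the fact that the cut-off functions $\eta_j$ in \eqref{defSj} have pairwise disjoint supports. Fix $j$ with $\Phi_j(\omega)\not\equiv 0$. Then, exactly as in Subsection~\ref{Inverse transform of c e^{-r sqrt z}S}, there is a bounded set $B\subset\R$ of positive measure on which $|c_j(\omega,i\xi)|\geq\delta_0$, and I would choose test functions $\varphi_k\in L_2(\dom;\R)$ supported in a small sector $\dom_{j,\epsilon}:=\dom\cap\{r_j<\epsilon\}$ around the single vertex $V_j$, bounded in $(H^{1+\alpha_j}(\dom_{j,\epsilon}))'$ and with $|\int_{\dom_{j,\epsilon}}S_j\varphi_k\dl x|\to\infty$, as guaranteed by \cite[Theorem~1.4.5.3]{Gri85} applied to $S_j|_{\dom_{j,\epsilon}}$. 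Because $S_i\equiv 0$ on $\dom_{j,\epsilon}$ for $i\neq j$ (by the disjointness of the supports of $\eta_i$) and $E_i$ only couples to $\varphi_k$ through $S_i$, the other terms in the sum contribute $0$ against these $\varphi_k$, so the lower bound in $H^{-r}(\R)$ survives. Extending $\varphi_k$ by zero to $\dom$ (which preserves the $(H^{1+\alpha_j}(\dom))'$-bound because $\alpha_j\in(1/2,1)$ and $\varphi_k\in L_2(\dom)=(L_2(\dom))'\hookrightarrow (H^{1+\alpha_j}(\dom))'$ acts only by integration) gives the desired obstruction.
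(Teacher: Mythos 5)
Your proposal is correct and follows exactly the route the paper itself indicates for Theorem~\ref{mainResGen}, namely rerunning the proof of Theorem~\ref{mainRes} with the multi-corner Grisvard decomposition of the Helmholtz solution from \cite[Chapter~2]{Gri92} and applying the Paley--Wiener argument corner by corner. The only cosmetic remark is that for the stated assertion about the single term $(\Phi_j*E_j\,S_j)(\omega)$ the disjointness of the supports of the $\eta_j$ is not actually needed (the one-corner argument applies verbatim to each summand); the disjointness enters only when passing to statements about the full sum, as in the corollary.
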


Similar to the proof of Corollary~\ref{mainResCor}, using the disjointness of the supports of the corner singularity functions $S_j$, one obtains the following decomposition of $u$ in the space $L_2\big(\Omega,\cF_T,\wP;(H^{s}(0,T))'\hat\otimes H^1_0(\dom)\big)$.

\begin{cor}
Let the setting of Theorem \ref{mainResGen} be given and consider the mild solution $u=(u(t))_{t\in[0,T]}$ to Eq.~\eqref{SHE} as an element of $L_2\big(\Omega,\cF_T,\wP;L_2(0,T)\hat\otimes H^1_0(\dom)\big)$ as described in Subsection~\ref{The solution process as a tensor product-valued random variable}. Let $\mathcal E:H^s(0,T)\to H^s(\R)$ be a linear and bounded extension operator. For $\omega\in\Omega$, $\phi\in H^s(0,T)$, $\varphi\in H^{-1}(\dom)$ and $j\in\{1,\ldots,n\}$ with $\gamma_j>\pi$ define
\[
u_{\mathrm R}(\omega,\phi,\varphi):=u_{+,\mathrm R}(\omega,\mathcal E\phi,\varphi),\qquad
u_{\mathrm S,j}(\omega,\phi,\varphi):=\big(\Phi_j*E_j\,S_j\big)(\omega,\mathcal E\phi,\varphi),
\]
where $u_{+,\mathrm R}$ and $\Phi_j*E_j\,S_j$ are as in Theorem \ref{mainResGen}.

Then, 
\begin{equation*}
u_{\mathrm R}\in L_2\big(\Omega,\cF_T,\wP;(H^{s}(0,T))'\hat\otimes H^2(\dom)\big)\cap L_2\big(\Omega,\cF_T,\wP;(H^{s}(0,T))'\hat\otimes H^1_0(\dom)\big),
\end{equation*}
\[
u_{\mathrm S,j}\in L_2\big(\Omega,\cF_T,\wP;(H^{s}(0,T))'\hat\otimes H^1_0(\dom)\big)
\]
for all $j\in\{1,\ldots,n\}$ such that $\gamma_j>\pi$,
and the decomposition
\begin{equation*}
u=u_{\mathrm R}+\sum_{\gamma_j>\pi}u_{\mathrm S,j}
\end{equation*}
holds as an equality in $L_2\big(\Omega,\cF_T,\wP;(H^{s}(0,T))'\hat\otimes H^1_0(\dom)\big)$. 
For $\wP$-almost every $\omega\in\Omega$,
\begin{equation*}
u_{\mathrm S,j}(\omega)\notin\bigcup_{r\geq0}(H^r(0,T))'\hat\otimes H^{1+\alpha_j}(\dom)
\quad\Longleftrightarrow\quad
u_{\mathrm S,j}(\omega)\not\equiv0
\quad\Longleftrightarrow\quad
\Phi_j(\omega)\not\equiv0.
\end{equation*}

Moreover, there exists a constant $C>0$, depending only on $s$, $T$, $\dom$ and the cut-off functions $\eta_j$ in \eqref{defSj}, such that
\begin{equation*}
\begin{aligned}
\E\nnrm{u_{\mathrm R}}{(H ^s(0,T))'\hat\otimes H^2(\dom)}^2
\leq C\E\Big(&\nnrm{u_0}{L_2(\dom)}^2+\nnrm{u(T)}{L_2(\dom)}^2\\
&+\int_0^T\gnnrm{F\big(u(t)\big)}{L_2(\dom)}^2\dl t+\sup_{t\in[0,T]}\gnnrm{G\big(\tilde u(t)\big)}{\hs(U_0;L_2(\dom))}^2\Big),
\end{aligned}
\end{equation*}
where $\tilde u=(\tilde u(t))_{t\in[0,T]}$ denotes the modification of $u=(u(t))_{t\in[0,T]}$ that is continuous in $L_2(\dom;\R)$.
\end{cor}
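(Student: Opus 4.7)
The proof follows the same strategy as Corollary~\ref{mainResCor}, with a spatial localization as the new ingredient needed to isolate the singular contribution at a single vertex $V_j$. First I would dispose of the easy parts: the bounded extension operator $\mathcal E:H^s(0,T)\to H^s(\R)$ induces a bounded linear map $\mathcal E'\hat\otimes\mathrm{id}:H^{-s}(\R)\hat\otimes\cG\to(H^s(0,T))'\hat\otimes\cG$ for $\cG\in\{H^2(\dom),H^1_0(\dom)\}$, and by construction $u_{\mathrm R}(\omega)$ and $u_{\mathrm S,j}(\omega)$ are the images under this map of $u_{+,\mathrm R}(\omega)$ and $(\Phi_j*E_jS_j)(\omega)$, so the claimed membership properties and the bound on $\E\|u_{\mathrm R}\|^2$ follow at once from Theorem~\ref{mainResGen}. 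The additive decomposition $u=u_{\mathrm R}+\sum_{\gamma_j>\pi}u_{\mathrm S,j}$ in $L_2(\Omega;(H^s(0,T))'\hat\otimes H^1_0(\dom))$ reduces, via Theorem~\ref{mainResGen}, to the pointwise identity $u(\omega,\phi,\varphi)=u_+(\omega,\mathcal E\phi,\varphi)$ for $\phi\in H^s(0,T)$, $\varphi\in H^{-1}(\dom)$, which is valid because $(\mathcal E\phi)|_{(0,T)}=\phi$ and $u_+$ vanishes outside $[0,T]$.

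The substantive part is the implication $\Phi_j(\omega)\not\equiv 0\Rightarrow u_{\mathrm S,j}(\omega)\notin\bigcup_{r\geq 0}(H^r(0,T))'\hat\otimes H^{1+\alpha_j}(\dom)$ for any fixed $j$ with $\gamma_j>\pi$; all other equivalences follow trivially from this together with the obvious implication $\Phi_j(\omega)\equiv 0\Rightarrow u_{\mathrm S,j}(\omega)\equiv 0$. Since the truncation functions $\eta_k$ in \eqref{defSj} are constructed with pairwise disjoint supports, one can choose $\chi_j\in C^\infty(\overline\dom;\R)$ with $\chi_j\equiv 1$ on $\supp\eta_j$ and $\chi_j\equiv 0$ on $\supp\eta_k$ for every $k\neq j$. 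Multiplication $M_{\chi_j}$ by this smooth cut-off is bounded on each Sobolev-Slobodeckij space $H^r(\dom)$ and preserves $H^1_0(\dom)$, so the tensor product operator $\Xi_j:=\mathrm{id}\hat\otimes M_{\chi_j}$ is bounded on each tensor product space appearing below. A direct calculation on test functions $\varphi\in L_2(\dom)$, using $\chi_jS_k=0$ for $k\neq j$ and $\chi_jS_j=S_j$, gives $\Xi_j(\Phi_k*E_kS_k)(\omega)=0$ and $\Xi_j u_{\mathrm S,k}^\cR(\omega)=0$ for $k\neq j$, whereas $\Xi_j(\Phi_j*E_jS_j)(\omega)=(\Phi_j*E_jS_j)(\omega)$; here $u_{\mathrm S,k}^\cR(\omega,\phi,\varphi):=u_{\mathrm S,k}(\omega,\cR\phi,\varphi)$ and $\cR:H^r(\R)\to H^r(0,T)$ denotes restriction.

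To conclude, set also $u_{\mathrm R}^\cR(\omega,\phi,\varphi):=u_{\mathrm R}(\omega,\cR\phi,\varphi)$. Combining the identity $u(\omega,\cR\phi,\varphi)=u_+(\omega,\phi,\varphi)$, the decomposition $u=u_{\mathrm R}+\sum_k u_{\mathrm S,k}$ established above, and the decomposition of $u_+$ from Theorem~\ref{mainResGen}, one obtains the a.s.\ equality
\[
u_{+,\mathrm R}(\omega)-u_{\mathrm R}^\cR(\omega)=\sum_{\gamma_k>\pi}\big(u_{\mathrm S,k}^\cR(\omega)-(\Phi_k*E_kS_k)(\omega)\big)
\]
in $H^{-s}(\R)\hat\otimes H^1_0(\dom)$. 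Applying $\Xi_j$ collapses the right-hand side to $u_{\mathrm S,j}^\cR(\omega)-(\Phi_j*E_jS_j)(\omega)$, while the left-hand side stays in $H^{-s}(\R)\hat\otimes H^2(\dom)$. If on a set of positive probability one had $\Phi_j(\omega)\not\equiv 0$ together with $u_{\mathrm S,j}(\omega)\in(H^r(0,T))'\hat\otimes H^{1+\alpha_j}(\dom)$ for some $r=r(\omega)\geq s$, then $u_{\mathrm S,j}^\cR(\omega)\in H^{-r}(\R)\hat\otimes H^{1+\alpha_j}(\dom)$ by boundedness of $\cR|_{H^r(\R)}$, and the displayed identity would force $(\Phi_j*E_jS_j)(\omega)$ into $H^{-r}(\R)\hat\otimes H^{1+\alpha_j}(\dom)$, contradicting Theorem~\ref{mainResGen}.

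The main technical hurdle is the localization step: verifying that the smooth cut-off $\chi_j$ acts boundedly on the fractional Sobolev-Slobodeckij space $H^{1+\alpha_j}(\dom)$ (a standard multiplier result) and that $\Xi_j$ interacts with the blocks $\Phi_k*E_kS_k$ and $u_{\mathrm S,k}^\cR$ as claimed. Once these points are secured, the pairwise disjointness of the supports of the $\eta_k$'s permits the localization that reduces the multi-corner case to essentially the single-corner argument of Corollary~\ref{mainResCor}.
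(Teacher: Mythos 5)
Your proposal is correct and follows exactly the route the paper indicates: it is the argument of Corollary~\ref{mainResCor} (extension operator $\mathcal E$ for the membership statements and the norm bound, restriction operator $\cR$ plus the contradiction with $u_{+,\mathrm R}-u_{\mathrm R}^{\cR}\in H^{-s}(\R)\hat\otimes H^2(\dom)$ for the non-membership), combined with the disjointness of the supports of the $\eta_j$'s, which is precisely what your localization operator $\Xi_j=\mathrm{id}\hat\otimes M_{\chi_j}$ makes explicit in order to collapse the sum over corners to the single term at $V_j$. The paper only sketches this step, and your spelled-out version (including the boundedness of $M_{\chi_j}$ on $H^2(\dom)$ and $H^{1+\alpha_j}(\dom)$ and the identities $\Xi_j(\Phi_k*E_k S_k)=\delta_{jk}\,\Phi_j*E_j S_j$) is the intended mechanism.
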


\noindent
{\bf Acknowledgement.} I would like to thank Ren\'e L.~Schilling for helpful comments on the manuscript.

\end{appendix}

\providecommand{\bysame}{\leavevmode\hbox to3em{\hrulefill}\thinspace}

\section*{}
\noindent Felix~Lindner\\
TU Dresden \\
Institut f{\"u}r Mathematische Stochastik \\
01062 Dresden, Germany \\
Phone: 00\,49\,351\,463\,32\,437\\
E-mail: felix.lindner@tu-dresden.de \\

\end{document}